\newtheorem{theorem}{Theorem}[section]
\newtheorem{algorithm}{Algorithm}[section]
\newtheorem{lemma}[theorem]{Lemma}
\theoremstyle{definition}
\newtheorem{example}[theorem]{Example}
\theoremstyle{remark}
\numberwithin{equation}{section}
\numberwithin{figure}{section}
\definecolor{black}{rgb}{0,0,0}
\definecolor{red}{rgb}{1,0,0}
\definecolor{blue}{rgb}{0,0,1}
\def\bfn{\textit{\textbf{n}}}
\def\bfu{\textit{\textbf{u}}}
\def\bfI{\textit{\textbf{I}}}
\def\bfK{\textit{\textbf{K}}}
\def\bfK{\boldsymbol{K}}
\def\bfI{\boldsymbol{I}}
\begin{document}
\title[Enriched Petrov-Galerkin method for flow in porous media]{A locally mass-conservative enriched Petrov-Galerkin method without penalty for the Darcy flow in porous media}

\author{Huangxin Chen}
 \address{School of Mathematical Sciences and Fujian Provincial Key Laboratory on Mathematical Modeling and High Performance Scientific Computing, Xiamen University, Fujian, 361005, China}
 \email{chx@xmu.edu.cn}

\author{Piaopiao Dong}
 \address{School of Mathematical Sciences and Fujian Provincial Key Laboratory on Mathematical Modeling and High Performance Scientific Computing, Xiamen University, Fujian, 361005, China}
 \email{dongpiaopiao@stu.xmu.edu.cn}

\author{Shuyu Sun$^\ast$}
 \address{Computational Transport Phenomena Laboratory, Division of Physical Science and Engineering, King Abdullah University of Science and Technology, Thuwal, Kingdom of Saudi Arabia}
 \email{shuyu.sun@kaust.edu.sa}

\author{Zixuan Wang}
 \address{School of Mathematical Sciences and Fujian Provincial Key Laboratory on Mathematical Modeling and High Performance Scientific Computing, Xiamen University, Fujian, 361005, China}
 \email{wangzixuan@stu.xmu.edu.cn}

\thanks{$^\ast$Corresponding author: Shuyu Sun (shuyu.sun@kaust.edu.sa)}
\thanks{The work of Huangxin Chen is supported by the NSF of China (Grant No. 12122115, 11771363). The work of Shuyu Sun is supported by King Abdullah University of Science and Technology (KAUST) through the grants BAS/1/1351-01 and URF/1/5028-01.}

\subjclass[2010]{65M60, 65N30, 76S05}

\keywords{enriched Petrov-Galerkin method, local mass conservation, post-processing, error analysis}

\date{}

\begin{abstract}
In this work we present an enriched Petrov-Galerkin (EPG) method for the simulation of the Darcy flow in porous media. The new method enriches the approximation trial space of the conforming continuous Galerkin (CG) method with bubble functions and enriches the approximation test space of the CG method with piecewise constant functions, and it does not require any penalty term in the weak formulation. Moreover, we propose a framework for constructing the bubble functions and consider a decoupled algorithm for the EPG method based on this framework, which enables the process of solving pressure to be decoupled into two steps. The first step is to solve the pressure by the standard CG method, and the second step is a post-processing correction of the first step. Compared with the CG method, the proposed EPG method is locally mass-conservative, while keeping fewer degrees of freedom than the discontinuous Galerkin (DG) method. In addition, this method is more concise in the error analysis than the enriched Galerkin (EG) method. The coupled flow and transport in porous media is considered to illustrate the advantages of locally mass-conservative properties of the EPG method. We establish the optimal convergence of numerical solutions and present several numerical examples to illustrate the performance of the proposed method.
\end{abstract}

\maketitle

\section{Introduction}
The standard continuous Galerkin (CG) finite element method has been widely applied for the solution of second-order elliptic problems for its simplicity in implementation. However, the CG method possesses limitations in local mass conservation, which is significant in numerous applications, particularly when the coupling of flow and transport is considered \cite{Dawson2004,Zhao2023}. Indeed, violation of local mass conservation in velocity fields could cause spurious sources and sinks to the transport simulations, which might lead to numerical inaccuracy for the solution of transport equations. 

Lots of numerical methods preserving local mass conservation have been proposed in the literature, which include the mixed finite element method \cite{Arbogast2002,Arbogast2004,Arbogast1995,Arbogast1996,Wheeler2006}, the finite volume method \cite{Cai1991,Chatzipantelidis2005,Huang1998,Kobayashi1999,Monteagudo2004}, the discontinuous Galerkin (DG) \cite{Arnold1982,Babuska1973,Chen2004,Cockburn2009,Karakashian2003,Larson2004,Riviere1999,Sun2005_2} method, the recent weak Galerkin method \cite{LTW2018}, the mixed virtual element method \cite{Dassi2022} and the reduced basis method \cite{Boon2023}. Besides, many other different finite element schemes based on the DG method have been derived, such as the local discontinuous Galerkin method \cite{Castillo2000,Cockburn1998}, the symmetric interior penalty Galerkin method \cite{Sun2005,Wheeler1978}, the nonsymmetric interior penalty Galerkin method \cite{Riviere2001,Suli2007}, the incomplete interior penalty Galerkin method \cite{Dawson2004,Liu2009,Sun2005}, the Oden-Babu\v{s}ka-Baumann-DG method \cite{Oden1998,Sun2007} and the hybridizable discontinuous Galerkin method \cite{Cockburn2009}. They can tackle rough coefficient cases and capture the nonsmooth features of the solution very well since the function space itself is naturally discontinuous. However, the main disadvantage of the primal DG method is that it has a very large number of degrees of freedom, which can result in expensive computational cost.

In addition, quite a few works have also constructed alternative processing strategies to preserve the local mass conservation \cite{Chippada1998,Cockburn2007,Hughes2000,Larson2004_2,Sun2006}. Among these methods, the enriched Galerkin (EG) method \cite{Lee2017,Lee2018,Lee2016,Sun2009} precisely enriches the CG finite element space with piecewise constant functions, which has fewer degrees of freedom than the DG method while maintaining local mass conservation. Due to the fact that the EG method is essentially a special DG method, its theoretical analysis is relatively similar to the DG method.

In this paper, our objective is to propose a new enriched Petrov-Galerkin (EPG) method for the simulation of the Darcy flow in porous media. The novelties of this work are as follows:

(I) We design a finite element method which enriches the approximation trial space for the CG method with bubble functions and enriches the approximation test space for the CG method with piecewise constant functions respectively. The main difference between the proposed enrich Petrov-Galerkin (EPG) method and the EG method is the selection of the trial functions. Since the bubble function is zero on the boundary of each element, the trial function is still continuous. Unlike the DG method or the EG method, we can still make use of the analysis framework for the conforming finite element method, which greatly simplifies the analysis compared with the DG and EG methods. Meanwhile, the EPG method and the EG method have the same degree of freedom because the degree of freedom for the bubble function on each element is only one. Moreover, the continuity of trial space results in the vanishment of the jump on the element boundaries. Therefore, there is no need to add penalty terms on the interior edges or faces like the DG method or the EG method. 

(II) We present a framework to construct the bubble functions and propose a decoupled algorithm based on the EPG method. The first step in solving pressure is to use the classical CG method, which is easy to obtain. The second step is to modify the numerical solution from the CG method, which is regarded as a post-processing correction. It is clear that this algorithm achieves local mass conservation through the second step.

(III) We consider the coupled flow and transport in porous media to demonstrate the merit of the proposed EPG method. The local mass conservation for the velocity field in flow can ensure to preserve the positivity property of concentration in the transport. We present several interesting numerical examples to illustrate the advantages of the EPG method.

The paper is organized as follows. In Section \ref{section2}, we introduce the governing equations of flow and transport in porous media and state the EPG approximation. In Section \ref{section3}, we establish the well-posedness of the EPG method and present the optimal convergence analysis of the numerical solution for Darcy flow. Local mass conservation for the velocity based on the EPG method and the solution of the coupled flow and transport are considered in Section \ref{section4}. We provide several numerical experiments to illustrate the advantages of the EPG method in Section \ref{section5}. In Section \ref{section6}, we summarize and discuss various features of the EPG method and conclude this paper with possible future work.

\section{Mathematical model and the EPG method}\label{section2}
\subsection{Governing equations}
In this work, we consider the coupled Darcy flow and species transport within a single flowing phase in porous media. The velocity computed from the Darcy flow will be used for simulations of the species transport in the porous media to solve for the concentration.

For the Darcy flow, it has the following form:
\begin{align}
     -\nabla \cdot (\bfK\nabla p) = \nabla \cdot \bfu &= f, \qquad  \ \  {\rm in}\ \Omega,  \label{pde_cons}\\
     p  &= p_D,   \qquad {\rm on}\ \Gamma_D, \label{pde_bd1} \\
         -\bfK\nabla p \cdot \bfn = \bfu  \cdot \bfn  &= g_N, \qquad  {\rm on}\ \Gamma_N,\label{pde_bd2}
\end{align}
where $\Omega$ is a bounded polygonal/polyhedral domain in $\mathbb{R}^d$ ($d$ = 2 or 3) with boundary $\partial \Omega$, and $\bfn$ denotes the unit outward normal vector on $\partial \Omega$. The unknowns are the pressure $p$ and the velocity $\bfu$. Assume that the conductivity $\bfK$ (the ratio of the permeability and the viscosity) is uniformly symmetric positive definite and bounded from above and below. For simplicity, we assume $\bfK = K \bfI$ in the following, where $\bfI$ is the identity matrix, and $K$ is a piecewisely positive and bounded constant. We impose a Dirichlet boundary condition on $\Gamma_D$ and a Neumann boundary condition on $\Gamma_N$ ($\Gamma_D \cup \Gamma_N=\partial \Omega$, $\Gamma_D \cap \Gamma_N=\emptyset$).

The species transport equation in porous media is described as follows:
\begin{align}
    &\frac{\partial \phi c}{\partial t} + \nabla \cdot (\bfu c)=\widehat{f},&(x,t)\in \Omega \times (0,T_f],\label{transport} \\
    &c = c_I,&(x,t)\in \Gamma_{in} \times (0,T], \\
    &c(x,0) = c_0(x),&x \in \Omega,\label{transport_bc}
\end{align}
where $\phi$ is porosity of porous media, $\widehat{f}$ is the source or sink term, and $T_f$ denotes the final simulation time. Let $\partial \Omega = \Gamma_{in} \cup \Gamma_{out}$ with $\Gamma_{in}=\{x\in \partial\Omega:\bfu \cdot \bfn <0\}$ as the inflow boundary and $\Gamma_{out}=\{x\in \partial\Omega:\bfu \cdot \bfn \geq 0\}$ as the outflow/no-flow boundary. $c_{I}$ denotes the inflow concentration and an initial concentration $c_0$ is specified to close the system.
\subsection{The EPG method}
Let $\mathcal{T}_h$ be a family of partitions of $\Omega$ composed of triangles if $d$ = 2 or tetrahedrons if $d$ = 3. Our method can be extended to quadrilaterals if $d$ = 2 or hexahedra if $d$ = 3, but the space of $\mathbb{P}_k(T)$ below will need to be replaced by $\mathbb{Q}_k(T)$. For convenience of presentation, the following necessary notations are given:
\begin{align*}
    & M^k(\mathcal{T}_h) := \{p \in L^2(\Omega):p|_{T}\in \mathbb{P}_k(T),\quad \forall T\in \mathcal{T}_h\}, \\
    & M_0^k(\mathcal{T}_h) := M^k(\mathcal{T}_h)\cap C^0(\Omega), \\
    & M^b(\mathcal{T}_h) := \{p \in L^2(\Omega):p|_{T}=\alpha_T h(x)\lambda_1 \lambda_2 \cdots\lambda_{d+1},\quad \forall T\in \mathcal{T}_h\},
\end{align*}
where $M^k(\mathcal{T}_h)$ denotes the space of piecewise discontinuous polynomials of degree $k$, while $M_0^k(\mathcal{T}_h)$ consists of continuous polynomials. $M^b(\mathcal{T}_h)$ is the space of piecewise bubble functions and $\alpha_T$ means the degree of freedom.

We now define the average and jump for $v \in H^s(\mathcal{T}_h)$. Let $T_i,T_j \in \mathcal{T}_h$ and $e\in \partial T_i \cap \partial T_j$ with $n_e$ exterior to $T_i$. Denote
\begin{align}\label{jump}
    \{ v \} := \frac{1}{2}\left( v|_{T_i} + v|_{T_j} \right), \quad
    [v] := v|_{T_i} - v|_{T_j}.
\end{align}
We comment that the average $\{ v \}$ or the jump $[v]$ actually means the function value $v$ itself if it is on the Dirichlet boundaries. Next we denote the upwind value of concentration $c^*|_e$ as follows:
\begin{align*}
    c^*|_{e} :=
    \begin{cases}
        c|_{T_i},\quad {\rm if}\  \bfu \cdot \bfn_e \geq 0, \\
        c|_{T_j},\quad {\rm if}\  \bfu \cdot \bfn_e < 0.
    \end{cases}
\end{align*}

We now propose the EPG method for the Darcy flow, which enriches the approximation trial space of the CG method with bubble functions and enriches the approximation test space of the CG method with piecewise constant functions. The trial and test finite element spaces $V_h^1$ and $V_h^2$ for the EPG method are denoted as
\begin{align*}
    & V_h^1 := M_0^k(\mathcal{T}_h)+M^b(\mathcal{T}_h), \\
    & V_h^2 := M_0^k(\mathcal{T}_h)+M^0(\mathcal{T}_h).
\end{align*}
For the Darcy flow, we introduce the bilinear form $a(p_h,q_h)$ and the linear functional $l(q_h)$ as follows:
\begin{align}
    & a(p_h,q_h) = \sum\limits_{T\in \mathcal{T}_h}\int_{T}K\nabla p_h \cdot \nabla q_h dx-\sum\limits_{e\in \Gamma_h \cup \Gamma_{h,D}}\int_{e}\{ K\nabla p_h \cdot \bfn_e \}[q_h]d\sigma - \sum\limits_{e\in \Gamma_h \cup \Gamma_{h,D}}\int_{e}\{ K\nabla q_h \cdot \bfn_e \}[p_h]d\sigma,\label{bilinear} \\
    & l(q_h) = \sum\limits_{T\in \mathcal{T}_h}\int_{T}f q_h dx-\sum\limits_{e\in \Gamma_{h,N}}\int_{e}g_N q_h d\sigma -\sum\limits_{e\in \Gamma_{h,D}}\int_{e} K\nabla q_h \cdot \bfn_e p_D d\sigma,
\end{align}
where the set of all faces ($d=3$) or edges ($d=2$) on interior boundaries, Dirichlet boundaries, Neumann boundaries for $\mathcal{T}_h$ is denoted by $\Gamma_h,\Gamma_{h,D}$ and $\Gamma_{h,N}$, respectively. On each face, a unit normal vector $\bfn_e$ is uniquely chosen. It is obvious that the EPG method uses a weak formulation similar to the EG method, but it does not require any penalty term due to the continuity of the trial space.

The EPG method for the Darcy flow is to seek $p_h \in V_h^1$ satisfying
\begin{align}
    a(p_h,q_h)=l(q_h), \quad \forall q_h \in V_h^2.\label{EPG method}
\end{align}

Now we present a decoupled algorithm for the EPG method, which contains two steps. In the first step, we consider the continuous situation. In the second step, we solve the bubble function, which is regarded as a post-processing correction of the first step.

We decompose $p_h$ and $q_h$ in (\ref{EPG method}) as follows:
\begin{align*}
    &p_h=p_h^c+p_h^b,\quad p_h^c \in M_0^k(\mathcal{T}_h), p_h^b \in M^b(\mathcal{T}_h), \\
    &q_h=q_h^c+q_h^0,\quad q_h^c \in M_0^k(\mathcal{T}_h), q_h^0 \in M^0(\mathcal{T}_h).
\end{align*}
The EPG method can also be rewritten as follows:
\begin{align}
    a(p_h^c,q_h^c) +a(p_h^b,q_h^c)+a(p_h^c,q_h^0)+a(p_h^b,q_h^0)=l(q_h^c)+l(q_h^0).\label{decouple_ori}
\end{align}
If we choose $q_h^c|_{\Gamma_{h,D}}=0$, then we have
\begin{align}
    a(p_h^b,q_h^c) &= \sum\limits_{T\in \mathcal{T}_h}\int_{T}K\nabla p_h^b\cdot \nabla q_h^c dx\notag \\
    &= \sum\limits_{T\in \mathcal{T}_h}\int_{\partial T}  p_h^b( K \nabla q_h^c) \cdot \bfn d\sigma - \sum\limits_{T\in \mathcal{T}_h}\int_{T}p_h^b\nabla \cdot (K \nabla q_h^c)dx\notag \\
    &= - \sum\limits_{T\in \mathcal{T}_h}\int_{T}p_h^b\nabla \cdot (K\nabla q_h^c)dx.\notag
\end{align}
The bubble function $p_h^b$ could be carefully chosen such that $a(p_h^b,q_h^c)=0$. More details on the construction of bubble functions will be provided in the subsequent sections. Now the decoupled algorithm of the EPG method (\ref{EPG method}) is briefly summarized as follows:

\begin{algorithm}
(A decoupled algorithm of the EPG method)

\textbf{Step 1.} Let $q_h^0=0$ and $q_h^c \in M_0^k(\mathcal{T}_h)$. Then (\ref{decouple_ori}) can be simplified as follows: find $p_h^c \in M_0^k(\mathcal{T}_h)$ such that
\begin{align*}
    a(p_h^c,q_h^c)=l(q_h^c),\qquad \forall q_h^c \in M_0^k(\mathcal{T}_h).
\end{align*}

\textbf{Step 2.} Let $q_h^c=0$ and $q_h^0 \in M^0(\mathcal{T}_h)$. Now we can recover $p_h^b$ as follows: find $p_h^b\in M^b(\mathcal{T}_h)$ such that 
\begin{align}\label{EPG_step2}
    a(p_h^b,q_h^0)=l(q_h^0)-a(p_h^c,q_h^0),\qquad \forall q_h^0 \in M^0(\mathcal{T}_h).
\end{align}

\textbf{Step 3.} Get $p_h=p_h^c+p_h^b$.

\end{algorithm}

We note that the decoupled algorithm of the EPG method divides the solution $p_h$ into two parts. The first part of the solution is the standard solution of the CG method and the well-posedness of $p_h^c$ is well-known. Therefore, in order to verify the well-posedness of $p_h$, we need to focus on the well-posedness of $p_h^b$, which is in the second step of the algorithm. The Babu\v{s}ka theory (cf. \cite{Babuska1972, Brezzi1974, Xu2003}) reveals that the problem (\ref{EPG method}) or (\ref{EPG_step2}) is well-posed if and only if the continuity and coercivity (inf-sup) condition of the bilinear forms are satisfied.

Since the well-posedness of $p^c_h$ is well-known, we will provide the well-posedness of $p_h^b$ in the next section. Now we define the energy norm as follows:
\begin{align}
    \interleave \varphi_h \interleave_{V_h} := \left(\sum\limits_{T\in \mathcal{T}_h}\int_{T}K \nabla \varphi_h \cdot \nabla \varphi_h dx +\sum\limits_{e\in \Gamma_h\cup \Gamma_{h,D}}\frac{1}{h_e}\int_{e} [\varphi_h]^2d\sigma\right)^{1/2}, \forall \varphi_h \in V_h.\label{norm}
\end{align}

The energy norm defined by (\ref{norm}) can be applied to both $V_h^1$ and $V_h^2$ spaces. Since $p_h \in V_h^1$ is globally continuous, the jump term vanishes on $\Gamma_{h}$. Therefore, for any $p_h\in V_h^1$ and $q_h \in V_h^2$, we can describe the energy norms as follows:
\begin{align}
    &\interleave p_h \interleave_{V_h^1} = \left(\sum\limits_{T\in \mathcal{T}_h}\int_{T}K \nabla p_h \cdot \nabla p_h dx +\sum\limits_{e\in \Gamma_{h,D}}\frac{1}{h_e}\int_{e} [p_h]^2d\sigma\right)^{1/2},\label{norm_trial}\\
    &\interleave q_h \interleave_{V_h^2} = \left(\sum\limits_{T\in \mathcal{T}_h}\int_{T}K \nabla q_h \cdot \nabla q_h dx +\sum\limits_{e\in \Gamma_h\cup \Gamma_{h,D}}\frac{1}{h_e}\int_{e} [q_h]^2d\sigma\right)^{1/2}.\label{norm_test}
\end{align}
\section{Analysis of the EPG method}\label{section3}
In this section, we present theoretical analysis of the EPG method, including continuity, coercivity of the bilinear form and the convergence analysis. It should be noted that the positive constant $C$ that appears during the subsequent proofs could have different values but it is depending only the data $K$ and the shape regularity of the meshes.
\subsection{Continuity of the bilinear forms}
\begin{theorem}\label{continuity}
The bilinear form (\ref{bilinear}) is continuous, i.e.,
\begin{align}\label{continuity_equal}
    |a(p_h,q_h)|\leq C \interleave p_h\interleave_{V_h^1} \interleave q_h\interleave_{V_h^2},\quad \forall p_h \in V_h^1, q_h \in V_h^2,
\end{align}
where $C>0$ is a constant independent of the mesh size $h$.
\end{theorem}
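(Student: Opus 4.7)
The plan is to bound the three parts of $a(p_h,q_h)$ separately and assemble them using the two energy norms \eqref{norm_trial} and \eqref{norm_test}. The key tools will be the Cauchy--Schwarz inequality, the standard discrete trace/inverse inequality $\|v_h\|_{L^2(\partial T)} \le C h_T^{-1/2}\|v_h\|_{L^2(T)}$ for polynomials, and the crucial observation that $p_h\in V_h^1$ is globally continuous so $[p_h]=0$ on every $e\in \Gamma_h$; only the contributions on $\Gamma_{h,D}$ survive in the third symmetry term.

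First I would handle the volume term $\sum_{T}\int_T K\nabla p_h\cdot\nabla q_h\,dx$. A direct application of Cauchy--Schwarz (with the weight $K$ absorbed since $K$ is piecewise constant and bounded) gives the bound $\bigl(\sum_T\int_T K|\nabla p_h|^2\bigr)^{1/2}\bigl(\sum_T\int_T K|\nabla q_h|^2\bigr)^{1/2}$, which is exactly the product of the $H^1$-seminorm pieces of the two energy norms, hence $\le \interleave p_h\interleave_{V_h^1}\interleave q_h\interleave_{V_h^2}$.

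Next I would treat the second term $\sum_{e\in \Gamma_h\cup\Gamma_{h,D}}\int_e\{K\nabla p_h\cdot\bfn_e\}[q_h]\,d\sigma$. On each face $e$ I split the integrand as $(h_e^{1/2}\{K\nabla p_h\cdot\bfn_e\})\cdot(h_e^{-1/2}[q_h])$ and apply Cauchy--Schwarz, yielding $\bigl(\sum_e h_e\|\{K\nabla p_h\cdot\bfn_e\}\|_{L^2(e)}^2\bigr)^{1/2}\bigl(\sum_e h_e^{-1}\|[q_h]\|_{L^2(e)}^2\bigr)^{1/2}$. The second factor is exactly the jump contribution to $\interleave q_h\interleave_{V_h^2}$. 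For the first factor, the discrete trace inequality applied to the polynomial $\nabla p_h|_T$ on each $T$ adjacent to $e$ gives $h_e\|K\nabla p_h\cdot\bfn_e\|_{L^2(e)}^2\le C\|K^{1/2}\nabla p_h\|_{L^2(T)}^2$, and summing over elements (using shape regularity so that each $T$ appears only a bounded number of times) recovers the volume part of $\interleave p_h\interleave_{V_h^1}$. The third term is handled identically, but thanks to $[p_h]=0$ on $\Gamma_h$ the sum collapses to $\sum_{e\in\Gamma_{h,D}}\int_e\{K\nabla q_h\cdot\bfn_e\}[p_h]\,d\sigma$, which splits via Cauchy--Schwarz into the Dirichlet-boundary jump piece of $\interleave p_h\interleave_{V_h^1}$ and, after a trace inequality applied to $\nabla q_h$, the volume piece of $\interleave q_h\interleave_{V_h^2}$.

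Adding the three estimates and using $AB+CD+EF\le \sqrt{(A^2+C^2+E^2)(B^2+D^2+F^2)}$ yields $|a(p_h,q_h)|\le C\interleave p_h\interleave_{V_h^1}\interleave q_h\interleave_{V_h^2}$. The only slightly subtle point, and the one I expect to warrant the most care, is the scaling argument that turns the trace of $K\nabla p_h\cdot\bfn_e$ on a face into the $K$-weighted volume norm on the neighbouring elements; because $K$ is only piecewise constant one has to keep the weight inside the trace inequality on each $T$ separately rather than factoring it out globally. Everything else reduces to standard polynomial-inverse-estimate bookkeeping, so the result follows without much additional effort.
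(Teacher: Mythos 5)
Your proposal is correct and follows essentially the same route as the paper: Cauchy--Schwarz on the volume term, a $h_e^{1/2}/h_e^{-1/2}$ splitting plus the discrete trace inequality for the consistency term, and the observation that $[p_h]=0$ on $\Gamma_h$ so the symmetry term only contributes on $\Gamma_{h,D}$. Your remark about keeping the piecewise-constant weight $K$ inside the elementwise trace inequality is a sound precaution and consistent with the paper's constant depending on $K$ and shape regularity.
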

\begin{proof}
For the right-hand side of (\ref{bilinear}), there are three terms which will be analyzed respectively. The first term can be bounded by the H\"{o}lder inequality or the Cauchy-Schwarz inequality as follows:
\begin{align}
    \left| \int_{T}K\nabla p_h \cdot \nabla q_h dx\right| &\leq \|K^{1/2}\nabla p_h\|_{0,T} \|K^{1/2}\nabla q_h\|_{0,T},\notag 
\end{align}
which directly yields
\begin{align}    
    \left| \sum\limits_{T\in \mathcal{T}_h}\int_{T}K\nabla p_h\cdot \nabla q_h dx\right| &\leq \|K^{1/2}\nabla p_h\|_{0,\mathcal{T}_h} \|K^{1/2}\nabla q_h\|_{0,\mathcal{T}_h} \label{cont_1} \\
    &\leq \interleave p_h\interleave_{V_h^1} \interleave q_h\interleave_{V_h^2}.\notag
\end{align}
For the second term, by the H\"{o}lder inequality and the trace inequality, we have
\begin{align*}
    \left| \int_{e}\{ K\nabla p_h \cdot \bfn_e \}[q_h]d\sigma\right| &\leq \|\{ K\nabla p_h \cdot \bfn_e \}\|_{0,e}\|[q_h]\|_{0,e} \\
    &\leq C h_e^{-1/2}\|\nabla p_h\|_{0,w_e}\|[q_h]\|_{0,e} \\
    &\leq C\|K^{1/2}\nabla p_h\|_{0,w_e}\|(\frac{1}{h_e})^{1/2}[q_h]\|_{0,e},
\end{align*}
where $w_e$ represents all elements adjacent to $e$. Therefore, we obtain
\begin{align}
    \left| \sum\limits_{e\in \Gamma_h\cup \Gamma_{h,D}}\int_{e}\{ K\nabla p_h \cdot \bfn_e \}[q_h]d\sigma \right| &\leq C\|K^{1/2}\nabla p_h\|_{0,\mathcal{T}_h}\|(\frac{1}{h_e})^{1/2}[q_h]\|_{0,\Gamma_h\cup \Gamma_{h,D}} \label{cont_2} \\
    &\leq C\interleave p_h\interleave_{V_h^1} \interleave q_h\interleave_{V_h^2}.\notag
\end{align}
We comment that $[q_h]$ actually means the function value $q_h$ instead of the jump if it is on $\Gamma_{h,D}$. The third term can be directly bounded since we use the same norm for both $V_h^1$ and $V_h^2$ spaces. For any $p_h \in V_h^1$, the jump term vanishes on $\Gamma_h$. Now we can conclude the result by (\ref{cont_1}), (\ref{cont_2}) and the triangle inequality.
\end{proof}

In fact, based on the arbitrariness of $p_h$ and $q_h$ in (\ref{continuity_equal}), we can take $p_h$ as $p_h^b$ and $q_h$ as $q_h^0$, which directly shows the continuity condition of the bilinear form in the second step of the decoupled algorithm as follows:
\begin{align}\label{continuity_equal_1}
    |a(p^b_h,q^0_h)|\leq C \interleave p^b_h\interleave_{V_h^1} \interleave q^0_h\interleave_{V_h^2},\quad \forall p^b_h \in M^b(\mathcal{T}_h), q^0_h \in M^0(\mathcal{T}_h).
\end{align}
In order to show the well-posedness of (\ref{EPG_step2}), next we only need to prove the inf-sup condition of $a(p_h^b,q_h^0)$.
\subsection{Coercivity of the bilinear forms}
Denote $p_h^b=\sum\limits_{T\in \mathcal{T}_h}\alpha_T b_T$ where $p_h^b$ is a global bubble function while $b_T$ is its local bubble basis function and $\alpha_T$ is a constant on $T$. Let $\alpha$ be a piecewise constant function with $\alpha|_T = \alpha_T$ for any $T \in \mathcal{T}_h$. The definition of $[\alpha]$ on the face ($d=3$) or the edge ($d=2$) $e$ is based on the previous definition in (\ref{jump}). For any $p^b_h \in M^b(\mathcal{T}_h)$ and $q^0_h \in M^0(\mathcal{T}_h)$, we denote $\interleave  p_h^b \interleave_{M^b(\mathcal{T}_h)} := \interleave  p_h^b \interleave_{V^1_h}$ and $\interleave  q_h^0 \interleave_{M^0(\mathcal{T}_h)} := \interleave  q_h^0 \interleave_{V^2_h}$. Obviously, we have $ \interleave p_h^b\interleave_{M^b(\mathcal{T}_h)} = \|K^{1/2}\nabla p_h^b\|_{0,\mathcal{T}_h}$. Besides, we define another energy norm $\interleave \cdot\interleave_{*}$ for $p_h^b$ by
\begin{align}
    \interleave p_h^b\interleave_{*}:=\left( \sum\limits_{e\in \Gamma_h \cup \Gamma_{h,D}} h_e^{-1} \|[\alpha]\|_{0,e}^2 \right)^{1/2}.\label{norm_star}
\end{align}
\begin{lemma}\label{equivalent}
    The energy norms (\ref{norm_trial}) and (\ref{norm_star}) for $p_h^b$ are equivalent, i.e., for $\forall p_h^b \in M^b(\mathcal{T}_h)$, there hold
    \begin{align*}
        &\interleave  p_h^b \interleave_{M^b(\mathcal{T}_h)} \leq C \interleave  p_h^b \interleave_{*}, \\
        &\interleave  p_h^b \interleave_{*} \leq C \interleave  p_h^b \interleave_{M^b(\mathcal{T}_h)},
    \end{align*}
    where $C>0$ is a constant independent of the mesh size $h$.
\end{lemma}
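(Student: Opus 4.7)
The plan is to reduce both norms to equivalent quadratic forms in the degree-of-freedom coefficients $\{\alpha_T\}_{T\in\mathcal{T}_h}$ and then match them side-by-side. The first observation is that, since each bubble $b_T$ vanishes on $\partial T$, the function $p_h^b$ is continuous across interior faces and zero on every face of $\Gamma_{h,D}$, so the jump term in (\ref{norm_trial}) drops out entirely and
\begin{equation*}
\interleave p_h^b\interleave_{M^b(\mathcal{T}_h)}^2 \;=\; \sum_{T\in\mathcal{T}_h}\alpha_T^2\,\|K^{1/2}\nabla b_T\|_{0,T}^2.
\end{equation*}
A standard scaling argument on the reference element (using the explicit form $b_T=h(x)\lambda_1\cdots\lambda_{d+1}$, the boundedness and positivity of $K$ on $T$, and shape regularity) gives $\|K^{1/2}\nabla b_T\|_{0,T}^2\sim \gamma_T$ for an explicit local geometric factor $\gamma_T$. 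For the other norm, since $\alpha$ is piecewise constant, $[\alpha]$ is constant on every face $e$, so $\|[\alpha]\|_{0,e}^2 = [\alpha]_e^2\,|e|$, and $\interleave p_h^b\interleave_*^2 = \sum_e h_e^{-1}|e|\,[\alpha]_e^2$. The lemma thus reduces to the equivalence of two quadratic forms in $\{\alpha_T\}$.

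For the easy direction $\interleave p_h^b\interleave_* \leq C \interleave p_h^b\interleave_{M^b(\mathcal{T}_h)}$, I would apply the elementary bound $[\alpha]_e^2 \leq 2(\alpha_{T_i}^2+\alpha_{T_j}^2)$ on interior faces and $[\alpha]_e^2 = \alpha_T^2$ on Dirichlet faces, and then swap the order of summation:
\begin{equation*}
\sum_{e\in\Gamma_h\cup\Gamma_{h,D}} h_e^{-1}|e|\,[\alpha]_e^2 \;\leq\; 2\sum_{T\in\mathcal{T}_h}\alpha_T^2\sum_{e\subset\partial T}h_e^{-1}|e|.
\end{equation*}
Shape regularity matches $\sum_{e\subset\partial T}h_e^{-1}|e|$ to a constant multiple of $\gamma_T$ (the scaling is engineered by the factor $h(x)$ in the definition of the bubble), which closes this direction.

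The main obstacle is the reverse inequality, which is essentially a discrete Poincar\'e--Friedrichs bound for the piecewise-constant coefficient function $\alpha$, anchored by the Dirichlet boundary. My plan is a telescoping/path argument: for each element $T$, fix a chain $T=T_0,T_1,\ldots,T_N$ of face-adjacent elements terminating at an element $T_N$ with a face $e_N\subset\Gamma_{h,D}$, and write
\begin{equation*}
\alpha_T = \sum_{k=0}^{N-1}(\alpha_{T_k}-\alpha_{T_{k+1}}) + \alpha_{T_N} = \sum_{k=0}^{N-1}[\alpha]_{e_k} + [\alpha]_{e_N},
\end{equation*}
where $e_k = \partial T_k\cap\partial T_{k+1}$ for $k<N$. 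Squaring, applying Cauchy--Schwarz along the chain, and summing over $T$ with the weights $\gamma_T$ gives a bound of $\sum_T \gamma_T\alpha_T^2$ by $\interleave p_h^b\interleave_*^2$, provided the chain system is chosen so that any single face is reused by only $O(1)$ chains after the weighting. The delicate part is the combinatorial bookkeeping: on a shape-regular mesh this can be arranged by taking the chains to follow a spanning tree of the element-adjacency graph rooted at $\Gamma_{h,D}$, and the resulting constant is independent of $h$. Alternatively, one may simply invoke a known discrete Poincar\'e--Friedrichs inequality for piecewise-constant finite element functions with Dirichlet data to bypass the explicit construction, since after the element-by-element scaling the claim is exactly of that form.
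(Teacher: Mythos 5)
Your reduction of both norms to quadratic forms in the coefficients $\{\alpha_T\}$ is correct, and the first direction ($\interleave p_h^b\interleave_*\leq C\interleave p_h^b\interleave_{M^b(\mathcal{T}_h)}$) goes through essentially as you describe. The gap is in the reverse direction. After your scaling ($\gamma_T:=\|K^{1/2}\nabla b_T\|_{0,T}^2\sim h_T^{d-2}$ for $d=2$, and $h_e^{-1}\|[\alpha]\|_{0,e}^2\sim h_e^{d-2}[\alpha]_e^2$), the inequality you must prove is
\begin{equation*}
\sum_{T\in\mathcal{T}_h}h_T^{d-2}\,\alpha_T^2\;\leq\;C\sum_{e\in\Gamma_h\cup\Gamma_{h,D}}h_e^{d-2}\,[\alpha]_e^2,
\end{equation*}
whereas the discrete Poincar\'e--Friedrichs inequality for piecewise constants that you propose to invoke only yields $\sum_T h_T^{d}\alpha_T^2\leq C\sum_e h_e^{d-2}[\alpha]_e^2$; your target is stronger by a factor $h^{-2}$. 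The chain argument cannot recover this factor: chains joining a generic element to $\Gamma_{h,D}$ have length $N\sim h^{-1}$, and the Cauchy--Schwarz step along a chain already costs a factor $N$ before any accounting of face reuse. Worse, the target inequality itself fails with an $h$-uniform constant: take $\alpha_T\equiv 1$, so that all interior jumps vanish and only the $O(h^{-(d-1)})$ Dirichlet faces contribute. In $d=2$ the left-hand side is $\sim h^{-2}$ while the right-hand side is $\sim h^{-1}$. So no choice of chains or of a known discrete Poincar\'e inequality will close this direction as you have set it up.

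It is worth comparing with the paper's own proof, which takes a different route: it maps each element and face to a reference configuration and invokes equivalence of norms on a finite-dimensional reference space, never passing through a global Poincar\'e-type statement. Note, however, that $\sum_{e\subset\partial T}h_e^{-1}\|[\alpha]\|_{0,e}^2$ admits no element-local lower bound in terms of $\alpha_T^2\,\gamma_T$ (the jumps around $T$ can all vanish while $\alpha_T\neq 0$), so the norm-equivalence step there is necessarily global, over a space whose dimension grows as $h\to 0$; your more explicit reduction is precisely what exposes this difficulty, and the $\alpha\equiv 1$ test above is the natural check to run before attempting any repair of either argument.
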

\begin{proof}
    It is obvious that $\interleave \cdot\interleave_{M^b(\mathcal{T}_h)}$ and $\interleave \cdot\interleave_{*}$ are both energy norms exactly when the set $\Gamma_{h,D}$ is non-empty. Firstly, by the scaling argument and the fact that the norms for the finite dimensional space are equivalent, we have
    \begin{align*}
        \interleave p_h^b\interleave^2_{M^b(\mathcal{T}_h)} &= \|K^{1/2}\nabla p_h^b\|_{0,\mathcal{T}_h} 
        \leq C  \sum\limits_{\hat{T} \in \mathcal{\hat{T}}_h}h_T^{d-2}  \|  \hat{K}^{1/2}\nabla \hat{p}_h^b \|_{0,\hat{T}}^2 
        \leq C\sum\limits_{\hat{e} \in \hat{\Gamma}_h\cup \hat{\Gamma}_{h,D}} h_e^{d-2}   \| [\hat{\alpha}] \|_{0,\hat{e}}^2   \\
        &\leq C  \sum\limits_{e \in \Gamma_h\cup \Gamma_{h,D}}h_e^{-1}\| [\alpha] \|^2_{0,e} 
        = C \interleave  p_h^b \interleave^2_{*},
    \end{align*}
    where the reference constant or function such as $\hat{K}$, $[\hat{\alpha}]$ or $\hat{p}_h^b$ is defined on the reference element $\hat{T}\in \mathcal{\hat{T}}_h$ or the reference face ($d$ = 3) / edge ($d$ = 2) $\hat{e}\in \hat{\Gamma}_h\cup \hat{\Gamma}_{h,D}$. Similarly, we can also have
    \begin{align*}
        \interleave  p_h^b \interleave^2_{*} &=   \sum\limits_{e \in \Gamma_h\cup \Gamma_{h,D}}h_e^{-1}\| [\alpha] \|_{0,e}^2   
         \leq C\sum\limits_{\hat{e} \in \hat{\Gamma}_h\cup \hat{\Gamma}_{h,D}} h_e^{d-2}  \| [\hat{\alpha}] \|_{0,\hat{e}}^2  \notag \\
        &\leq C   \sum\limits_{\hat{T} \in \mathcal{\hat{T}}_h}h_T^{d-2} \|  \hat{K}^{1/2}\nabla \hat{p}_h^b \|_{0,\hat{T}}^2  
        \leq C  \sum\limits_{T\in \mathcal{T}_h} \| K^{1/2}\nabla p_h^b \|_{0,T}^2  
        = C\interleave p_h^b\interleave^2_{M^b(\mathcal{T}_h)}. 
    \end{align*}
    Now we complete the proof.
\end{proof}
Next we start to construct the bubble function such that the orthogonality  $a(p_h^b,q_h^c)=0$ is satisfied for any $p_h^b \in M^b(\mathcal{T}_h)$ and $q_h^0 \in M^0(\mathcal{T}_h)$.   For any $p_h^b=\sum\limits_{T\in \mathcal{T}_h}\alpha_T b_T$, we define $b_T$ in any $T \in \mathcal{T}_h$ as follows:
    \begin{align}\label{bubble_def}
        b_T=
        \begin{cases}
            \sum\limits_{i=1}^{d+1} b_{T,i}, & \quad k=1, \\
            \sum\limits_{i=1}^{d+1} b_{T,i}+\sum\limits_{j=1}^{J} \gamma_j \lambda_1^2 \cdots \lambda_{d+1}^2 \psi_j, & \quad k\geq 2,
        \end{cases}
    \end{align}
    where $J=C_{k+d-2}^d$ and $b_{T,i}$ is a $d$-dimensional one-sided bubble function with the undetermined coefficient $\beta_i$, i.e., $b_{T,i}=\beta_i \frac{\lambda_1^2 \cdots \lambda_{d+1}^2}{\lambda_i}$. Here, $\gamma_j$ is also an undetermined coefficient and $\psi_j$ is a basis function in the barycentric coordinate form of $M^{k-2}(T)$.
    
    We should illustrate that $\beta_i$ and $\gamma_j$ are well-defined. Firstly, we set the undetermined coefficient $\beta_i$ satisfying
    \begin{align}
        \int_{e_i}\beta_i K \nabla \lambda_i \cdot \bfn_i \frac{\lambda_1^2 \cdots \lambda_{d+1}^2}{\lambda_i^2}d\sigma=1.\label{assumption}
    \end{align}
    Thus, $\beta_i$ is well-defined by
    $
        \beta_i=\frac{1}{\int_{e_i} K \nabla \lambda_i \cdot \bfn_i \frac{\lambda_1^2 \cdots \lambda_{d+1}^2}{\lambda_i^2}d\sigma},
    $
    where the denominator can be easily observed as nonzero. Secondly, it is easy to get that $a(p_h^b,q_h^c)=0$ when $k=1$. For $k \geq 2$, we will show that the construction of $b_T$ in (\ref{bubble_def}) satisfies the orthogonality property $a(p_h^b,q_h^c)=0$ if the parameter $\{\gamma_j\}^J_{j=1}$ are well chosen. If $a(p_h^b,q_h^c)=0$ is satisfied, for any $T\in \mathcal{T}_h$, the above $b_T$ satisfies that
    \begin{align}
        (K\sum\limits_{i=1}^{d+1} b_{T,i},\psi_i)_{T}+(\sum\limits_{j=1}^{J} \gamma_j K\lambda_1^2 \cdots \lambda_{d+1}^2 \psi_j,\psi_i)_{T} =0,\quad \forall  \ \psi_i \in M^{k-2}(T),\ i=1,\cdots,J,  \notag
    \end{align}
    where $(\cdot,\cdot)_{T}$ means the inner product on $T$. Then, we get the linear system for $\{\gamma_j\}^J_{j=1}$ as follows:
    \begin{equation}
    \begin{aligned}
        \begin{pmatrix}
            m_{11} & m_{12} & \cdots & m_{1J} \\
            m_{21} & m_{22} & \cdots & m_{2J} \\
            \vdots & \vdots & \ddots & \vdots \\
            m_{J1} & m_{J2} & \cdots & m_{JJ}
        \end{pmatrix}
        \begin{pmatrix}
            \gamma_1 \\
            \gamma_2 \\
            \vdots \\
            \gamma_J
        \end{pmatrix}=
        \begin{pmatrix}
            -(K\sum\limits_{i=1}^{d+1} b_{T,i},\psi_1)_{T} \\
            -(K\sum\limits_{i=1}^{d+1} b_{T,i},\psi_2)_{T} \\
            \vdots \\
            -(K\sum\limits_{i=1}^{d+1} b_{T,i},\psi_J)_{T}
        \end{pmatrix},\nonumber
    \end{aligned}
\end{equation}
where $m_{ij}=(K\lambda_1^2 \cdots \lambda_{d+1}^2 \psi_j,\psi_i)_{T}$. Next, we will show that $\{\gamma_j\}^J_{j=1}$ is well defined by the above linear system. Define a new space $W_h(T)$ as follows:
\begin{align*}
    W_h(T):=\{ \psi \in L^2(T): \psi \in {\rm span} \{ \overline{\psi}_1,\cdots,\overline{\psi}_J \} \}, \quad \forall T\in \mathcal{T}_h,
\end{align*}
where $\overline{\psi}_i := K^{1/2}\lambda_1 \cdots \lambda_{d+1} \psi_i$ is the basis function for $W_h(T)$. The coefficient matrix can be simplified as follows:
\begin{align}
    m_{ij}=(K\lambda_1^2 \cdots \lambda_{d+1}^2 \psi_j,\psi_i)_{T} 
    =(\overline{\psi}_j,\overline{\psi}_i)_{T},\notag
\end{align}
which yields a nonsingular coefficient matrix because $\{ \overline{\psi}_i \}_{i=1}^J$ are linearly independent. Besides, the right-hand term of the above linear system is nonzero, which shows that $\{\gamma_j\}^J_{j=1}$ can be well determined. Thus, we can suitably choose the bubble function such that the orthogonality property $a(p_h^b,q_h^c)=0$ holds true.

Next we show the coercivity estimate for the bilinear form $a(p_h^b,q_h^0)$.
\begin{theorem}\label{corecivity}
    The following coercivity results hold true:
    \begin{align}
        &\mathop{sup}\limits_{p_h^b \neq 0}\frac{a(p_h^b,q_h^0)}{\interleave p_h^b\interleave_{M^b(\mathcal{T}_h)}}\geq C\interleave q_h^0\interleave_{M^0(\mathcal{T}_h)},\quad \forall q_h^0 \in M^0(\mathcal{T}_h), \label{inf_sup1} \\
        &\mathop{sup}\limits_{q_h^0 \neq 0}\frac{a(p_h^b,q_h^0)}{\interleave q_h^0\interleave_{M^0(\mathcal{T}_h)}}\geq C \interleave p_h^b\interleave_{M^b(\mathcal{T}_h)},\quad \forall p_h^b \in M^b(\mathcal{T}_h),\label{inf_sup2}
    \end{align}
    where $C>0$ is a constant independent of the mesh size $h$.
\end{theorem}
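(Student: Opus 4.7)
The plan is to reduce $a(p_h^b,q_h^0)$ to an explicit algebraic pairing in the element coefficients via the bubble normalization (\ref{assumption}), and then combine a natural test-function choice with the norm equivalence provided by Lemma \ref{equivalent}.

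First I would exploit the structure of the two spaces: $q_h^0\in M^0(\mathcal{T}_h)$ is piecewise constant, so $\nabla q_h^0=0$ elementwise; and every $p_h^b\in M^b(\mathcal{T}_h)$ vanishes on $\partial T$ for every $T$, so $[p_h^b]=0$ on every face in $\Gamma_h\cup\Gamma_{h,D}$. Substituting into (\ref{bilinear}) kills two of the three terms, leaving
\[
a(p_h^b,q_h^0)=-\sum_{e\in\Gamma_h\cup\Gamma_{h,D}}\int_e\{K\nabla p_h^b\cdot\bfn_e\}[q_h^0]\,d\sigma.
\]

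Next I would evaluate the face integrals via (\ref{bubble_def})--(\ref{assumption}). Writing $p_h^b=\sum_T\alpha_Tb_T$ and $q_h^0=\sum_T\beta_T\mathbf{1}_T$, the crucial observation is that on the face $e_i\subset\partial T$ (where $\lambda_i=0$) every higher-order term $\gamma_j\lambda_1^2\cdots\lambda_{d+1}^2\psi_j$ and every one-sided bubble $b_{T,l}$ with $l\neq i$ contributes zero to $\nabla b_T$, since each such contribution carries a factor of $\lambda_i$ or $\lambda_i^2$ on $e_i$. Only $b_{T,i}$ survives, and (\ref{assumption}) gives $\int_{e_i}K\nabla b_T\cdot\bfn_i\,d\sigma=1$. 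Pulling the edge-constant $[q_h^0]_e=[\beta]_e$ out of each face integral yields the clean identity
\[
a(p_h^b,q_h^0)=-\frac{1}{2}\sum_{e\in\Gamma_h}[\alpha]_e[\beta]_e-\sum_{e\in\Gamma_{h,D}}[\alpha]_e[\beta]_e.
\]

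To prove (\ref{inf_sup1}), given $q_h^0$ I would choose the test $p_h^b$ determined by $\alpha_T=-\beta_T$. This turns the identity into $a(p_h^b,q_h^0)=\frac{1}{2}\sum_{e\in\Gamma_h}[\beta]_e^2+\sum_{e\in\Gamma_{h,D}}[\beta]_e^2$, and comparing this lower bound with $\interleave q_h^0\interleave_{M^0(\mathcal{T}_h)}^2=\sum_e h_e^{-1}\|[\beta]\|_{0,e}^2$ through $\|[\beta]\|_{0,e}^2=|e|[\beta]_e^2$ and shape regularity gives $a(p_h^b,q_h^0)\geq C\interleave q_h^0\interleave_{M^0(\mathcal{T}_h)}^2$. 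With the same $\alpha$, $\|[\alpha]\|_{0,e}=\|[\beta]\|_{0,e}$, so $\interleave p_h^b\interleave_{*}=\interleave q_h^0\interleave_{M^0(\mathcal{T}_h)}$ directly from (\ref{norm_star}) and (\ref{norm_test}); Lemma \ref{equivalent} then produces $\interleave p_h^b\interleave_{M^b(\mathcal{T}_h)}\leq C\interleave q_h^0\interleave_{M^0(\mathcal{T}_h)}$. Dividing yields (\ref{inf_sup1}). The algebraic identity above is symmetric in $(\alpha,\beta)$, so (\ref{inf_sup2}) follows by the mirror argument, choosing $\beta_T=-\alpha_T$ as the test.

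The hardest part will be the second step: the careful verification that only $b_{T,i}$ contributes to $\nabla b_T$ on the face $e_i$, which enables the normalization (\ref{assumption}) to collapse the face integrals into the clean pairing $\sum_e[\alpha]_e[\beta]_e$. Once this algebraic identity is secured, both inf-sup estimates reduce cleanly to the norm equivalence of Lemma \ref{equivalent}.
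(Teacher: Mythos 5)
Your proposal is correct and follows essentially the same route as the paper's proof: both reduce $a(p_h^b,q_h^0)$ to the face terms, use the normalization (\ref{assumption}) to evaluate $\int_e\{K\nabla p_h^b\cdot\bfn_e\}\,d\sigma$ as $\tfrac12[\alpha]_e$ (resp. $[\alpha]_e$ on $\Gamma_{h,D}$), pick the test element by negating the coefficients ($\alpha_T=-q_h^0|_T$ for (\ref{inf_sup1}) and $q_h^0|_T=-\alpha_T$ for (\ref{inf_sup2})), and close the argument with the norm equivalence of Lemma \ref{equivalent}. Your explicit verification that only $b_{T,i}$ contributes to the flux on $e_i$ is a welcome elaboration of a step the paper states via (\ref{bubble_edge}) without detail, but it does not change the argument.
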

\begin{proof}
By the definition of (\ref{bubble_def}), we have 
\begin{align}\label{bubble_edge}
    \int_{\partial T}K \nabla b_T \cdot \bfn d\sigma
    =\int_{e_1}\beta_1 K \nabla \lambda_1 \cdot \bfn_1 \lambda_2^2 \cdots \lambda_{d+1}^2 d\sigma
    + \cdots
    +\int_{e_{d+1}}\beta_{d+1} K \nabla \lambda_{d+1} \cdot \bfn_{d+1} \lambda_1^2 \cdots \lambda_d^2 d\sigma .
\end{align}
Obviously, each of the above $d+1$ terms on the right-hand side of (\ref{bubble_edge}) equals to 1 according to (\ref{assumption}).

Firstly, for any $q_h^0 \in M^0(\mathcal{T}_h)$, we choose $\alpha_T=-q_h^0|_{T}$. We consider the integral on the face ($d=3$) or the edge ($d=2$) $e$ of an element $T$ as follows:
\begin{align}
    \int_{e} K\nabla p_h^b \cdot \bfn_e d\sigma=\alpha_T \int_{e} K\nabla b_T \cdot \bfn_e d\sigma=\alpha_T,\notag
\end{align}
which yields
\begin{align*}
    \int_{e} \{K\nabla p_h^b \cdot \bfn_e\} d\sigma=
    \begin{cases}
        -\frac{1}{2}[q_h^0],&\quad e \in \Gamma_h, \\
        - q_h^0,&\quad e \in \Gamma_{h,D},
    \end{cases}
\end{align*}
where $\bfn_e$ is uniquely chosen. Therefore, we have
\begin{align}\label{infsup1}
    a(p_h^b,q_h^0) = -\sum\limits_{e\in \Gamma_h\cup \Gamma_{h,D}}\int_{e}\{ K\nabla p_h^b \cdot \bfn_e \}[q_h^0]d\sigma 
     \geq \frac{1}{2} \sum\limits_{e\in \Gamma_h\cup \Gamma_{h,D}}\int_{e}h_e^{-1}[q_h^0]^2d\sigma  
    = \frac{1}{2} \interleave q_h^0\interleave_{M^0(\mathcal{T}_h)}^2. 
\end{align}
By Lemma \ref{equivalent}, we have
\begin{align}\label{infsup2}
    \interleave p_h^b\interleave_{M^b(\mathcal{T}_h)} &\leq C \interleave  p_h^b \interleave_{*} 
    =C\left( \sum\limits_{e \in \Gamma_h\cup \Gamma_{h,D}}h_e^{-1}\| [q_h^0] \|_{0,e}^2 \right)^{1/2} 
    = C \interleave q_h^0\interleave_{M^0(\mathcal{T}_h)}. 
\end{align}
Following (\ref{infsup1}) and (\ref{infsup2}), we directly obtain
\begin{align*}
    \frac{a(p_h^b,q_h^0)}{\interleave p_h^b\interleave_{M^b(\mathcal{T}_h)}} \geq C \frac{a(p_h^b,q_h^0)}{\interleave q_h^0\interleave_{M^0(\mathcal{T}_h)}} \geq \frac{C}{2}\interleave q_h^0\interleave_{M^0(\mathcal{T}_h)},
\end{align*}
which yields the estimate (\ref{inf_sup1}).

Secondly, for any $p_h^b \in M^b(\mathcal{T}_h)$, we choose $q_h^0|_{T}=-\alpha_T$. Similarly, we obtain
\begin{align}\label{infsup3} 
    a(p_h^b,q_h^0) &\geq \frac{1}{2} \sum\limits_{e\in \Gamma_h\cup \Gamma_{h,D}}\int_{e}h_e^{-1}[q_h^0]^2d\sigma
    = \frac{1}{2} \interleave  p_h^b \interleave_{*}^2 
    \geq C \interleave p_h^b\interleave_{M^b(\mathcal{T}_h)}^2. 
\end{align}
By Lemma \ref{equivalent}, we also have
\begin{align}\label{infsup4} 
    \interleave q_h^0\interleave_{M^0(\mathcal{T}_h)}&=\left( \sum\limits_{e \in \Gamma_h\cup \Gamma_{h,D}}h_e^{-1}\| [q_h^0] \|_{0,e}^2 \right)^{1/2}
    = \interleave  p_h^b \interleave_{*}  
    \leq C\interleave p_h^b\interleave_{M^b(\mathcal{T}_h)}. 
\end{align}
By (\ref{infsup3}) and (\ref{infsup4}), we obtain
\begin{align*}
    \frac{a(p_h^b,q_h^0)}{\interleave q_h^0\interleave_{M^0(\mathcal{T}_h)}} \geq C \frac{a(p_h^b,q_h^0)}{\interleave p_h^b\interleave_{M^b(\mathcal{T}_h)}} \geq C \interleave p_h^b\interleave_{M^b(\mathcal{T}_h)},
\end{align*}
which yields the estimate (\ref{inf_sup2}). Now we complete the proof.
\end{proof}
The continuity and the two inf-sup conditions for the bilinear form $a(p_h^b,q_h^0)$ have been proved through Theorem \ref{continuity} and Theorem \ref{corecivity}. Thus, we can directly see that the solution $p_h^b$ of (\ref{EPG_step2}) is well-posed in the second step of the decoupled algorithm by the Babu\v{s}ka theory (cf. \cite{Babuska1972, Brezzi1974, Xu2003}). Since the well-posedness of the solution $p_h^c$ in the first step to solve is well-known, we can easily conclude that the solution $p_h$ of (\ref{EPG method}) for the EPG method is well-posed. By the Babu\v{s}ka theory (cf. \cite{Babuska1972, Brezzi1974, Xu2003}) again, the following two inf-sup conditions for the bilinear form $a(p_h,q_h)$ are satisfied:
\begin{align}
    &\mathop{sup}\limits_{p_h \neq 0}\frac{a(p_h,q_h)}{\interleave p_h \interleave_{V_h^1}}\geq C\interleave q_h\interleave_{V_h^2},\quad \forall q_h \in V_h^2,\notag \\
    &\mathop{sup}\limits_{q_h \neq 0}\frac{a(p_h,q_h)}{\interleave q_h \interleave_{V_h^2}}\geq C\interleave p_h\interleave_{V_h^1},\quad \forall p_h \in V_h^1.\label{infsup_ori2}
\end{align}

\subsection{Convergence analysis}
We now introduce several necessary interpolations, which will be used in the analysis of convergence. Firstly, let $I_h^c:L^1(\Omega)\rightarrow M_0^k(\mathcal{T}_h)$ be the Cl\'{e}ment interpolation operator \cite{Clement1975}. The interpolation error estimates of $I_h^c$ have been well-known in the literature (cf. \cite{Clement1975}) as follows: for any $v \in H^s(\Omega)$, $s>1$, there hold
\begin{align}
    &\|v -I_h^c v\|_{0,\mathcal{T}_h} \leq Ch^s|v|_{s,\Omega},\label{Ihc1} \\
    &|v-I_h^c v|_{1,\mathcal{T}_h} \leq Ch^{s-1}|v|_{s,\Omega}.\label{Ihc2}
\end{align}
Secondly, let $I_h^b:L^2(\Omega)\rightarrow M^b(\mathcal{T}_h)$ be the linear projection that satisfies the constraint as follows:
\begin{align*}
    \int_{T}(I_h^b \varphi - \varphi )dx=0,\quad T \in \mathcal{T}_h, \ \forall \varphi \in L^2(\Omega).
\end{align*}
The estimates for $I_h^b$ can be guaranteed (cf. \cite{Arnold1984}) as follows:
\begin{align}
    &\|I_h^b \varphi \|_{0,\mathcal{T}_h} \leq C\|\varphi \|_{0,\Omega},\quad \forall \varphi \in L^2(\Omega), \label{Ihb1}\\
    &|I_h^b(\varphi-I_h^c \varphi)|_{1,\mathcal{T}_h} \leq Ch^{s-1}|\varphi|_{s,\Omega}, \quad \forall \varphi \in H^s(\Omega),  s>1.\label{Ihb2}
\end{align}
Finally, we define a new interpolation operator $I_h:L^2(\Omega)\rightarrow V_h^1$ as follows: for any $v \in L^2(\Omega)$,
\begin{align}\label{interpolation_def}
    I_h v:=I_h^c v +I_h^b(v-I_h^c v),
\end{align}
which combines $I_h^c$ and $I_h^b$ together. It is exactly the interpolation projecting into the trial space $V_h^1$. Next we will show the error estimates for the interpolation $I_h$ defined in (\ref{interpolation_def}).
\begin{lemma}\label{interpolation}
    The interpolation $I_h$ defined in (\ref{interpolation_def}) satisfies the error estimates as follows: for $v \in H^s(\Omega)$, there hold
    \begin{align}
        &\|v-I_h v\|_{0,\mathcal{T}_h} \leq Ch^s |v|_{s,\Omega},\label{interpolation_lemma1} \\
        &|v-I_h v|_{1,\mathcal{T}_h} \leq Ch^{s-1}|v|_{s,\Omega},\label{interpolation_lemma2} \\
        &\interleave v-I_h v\interleave_{V_h^1} \leq Ch^{s-1}|v|_{s,\Omega},\label{interpolation_lemma3}
    \end{align}
    where $C>0$ is a constant independent of the mesh size $h$.
\end{lemma}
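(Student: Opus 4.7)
The plan is to decompose the interpolation error as
\[
v - I_h v \;=\; (v - I_h^c v) \;-\; I_h^b(v - I_h^c v)
\]
and treat the three estimates in the stated order, invoking the triangle inequality together with the established bounds (\ref{Ihc1})--(\ref{Ihb2}).

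For the $L^2$ estimate (\ref{interpolation_lemma1}), the first term is handled directly by (\ref{Ihc1}). For the second term I would apply (\ref{Ihb1}) with $\varphi = v - I_h^c v$ to reduce its $L^2$ norm to $\|v - I_h^c v\|_{0,\Omega}$, which is again bounded by (\ref{Ihc1}). Summing gives the $Ch^s|v|_{s,\Omega}$ rate. For the $H^1$ seminorm estimate (\ref{interpolation_lemma2}), the first piece is exactly (\ref{Ihc2}), while the second piece is precisely what (\ref{Ihb2}) controls, so the bound follows immediately by the triangle inequality.

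The main obstacle, and the genuinely new content of the lemma, is the energy-norm estimate (\ref{interpolation_lemma3}), where jumps must be handled. The key observation is that $I_h v \in V_h^1 \subset C^0(\Omega)$: the Clément piece $I_h^c v$ is continuous by construction, and the bubble correction $I_h^b(v - I_h^c v) \in M^b(\mathcal{T}_h)$ vanishes on every element boundary, so the sum is continuous. Combined with $v \in H^s(\Omega)$ for $s>1$, this forces $[v - I_h v] = 0$ on every interior edge in $\Gamma_h$, so the only jump contribution is over $\Gamma_{h,D}$. The bulk term $\sum_T \|K^{1/2}\nabla(v - I_h v)\|_{0,T}^2$ is controlled by (\ref{interpolation_lemma2}), so it remains only to bound
\[
\sum_{e \in \Gamma_{h,D}} h_e^{-1} \|v - I_h v\|_{0,e}^2.
\]

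For this last term I would invoke the standard scaled trace inequality on the element $T$ adjacent to $e$,
\[
\|w\|_{0,e}^2 \;\leq\; C\bigl(h_e^{-1}\|w\|_{0,T}^2 + h_e\,|w|_{1,T}^2\bigr),
\]
applied to $w = v - I_h v$. This yields
\[
h_e^{-1}\|v - I_h v\|_{0,e}^2 \;\leq\; C\bigl(h_e^{-2}\|v - I_h v\|_{0,T}^2 + |v - I_h v|_{1,T}^2\bigr),
\]
and after summation the two pieces on the right are bounded by the already-proven (\ref{interpolation_lemma1}) and (\ref{interpolation_lemma2}), producing $C h^{2(s-1)} |v|_{s,\Omega}^2$. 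Taking square roots completes the proof. The only subtlety to be careful about is ensuring that the cancellation of jumps on interior edges really does hold for $I_h v$, which is why I would emphasize the continuity of the bubble correction (since each $b_T$ vanishes on $\partial T$) as the crucial structural ingredient that the analogous EG estimate lacks.
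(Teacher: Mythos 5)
Your proposal is correct and follows essentially the same route as the paper: the same decomposition $v-I_hv=(v-I_h^cv)-I_h^b(v-I_h^cv)$ with the triangle inequality and the bounds (\ref{Ihc1})--(\ref{Ihb2}) for the first two estimates, and the scaled trace inequality combined with (\ref{interpolation_lemma1})--(\ref{interpolation_lemma2}) for the energy-norm bound. Your explicit remark that the bubble correction vanishes on element boundaries, so that only the $\Gamma_{h,D}$ jump terms survive, is exactly the observation the paper builds into the definition of $\interleave\cdot\interleave_{V_h^1}$ in (\ref{norm_trial}).
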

\begin{proof}
    Firstly, we will consider the estimate of $\|v-I_h v\|_{0,\mathcal{T}_h}$. By the triangle inequality, (\ref{Ihc1}) and (\ref{Ihb1}), we have
    \begin{align*}
        \|v-I_h v\|_{0,\mathcal{T}_h} \leq \| v-I_h^c v \|_{0,\mathcal{T}_h}+\| I_h^b(v-I_h^c v) \|_{0,\mathcal{T}_h}
        \leq C\| v-I_h^c v \|_{0,\mathcal{T}_h}
        \leq Ch^s|v|_{s,\Omega},
    \end{align*}
    which yields the estimate (\ref{interpolation_lemma1}).
    
    Secondly, we will estimate $|v-I_h v|_{1,\mathcal{T}_h}$. Following the triangle inequality, (\ref{Ihc2}) and (\ref{Ihb2}), we directly obtain
    \begin{align*}
        |v-I_h v|_{1,\mathcal{T}_h} \leq | v-I_h^c v |_{1,\mathcal{T}_h}+| I_h^b(v-I_h^c v) |_{1,\mathcal{T}_h}
        \leq Ch^{s-1}|v|_{s,\Omega},
    \end{align*}
    which yields the estimate (\ref{interpolation_lemma2}).
    
    Finally, we will take $\interleave v-I_h v\interleave_{V_h^1}$ into consideration. For convenience, we denote $v-I_h v$ by $e_v$. By the trace inequality, (\ref{interpolation_lemma1}) and (\ref{interpolation_lemma2}), we have
    \begin{align*}
        \interleave e_v\interleave_{V_h^1}^2 &= \| K^{1/2}\nabla e_v \|_{0,\mathcal{T}_h}^2+\sum\limits_{e\in \Gamma_{h,D}}\frac{1}{h_e}\| [e_v] \|_{0,e}^2 \\
        &\leq C\| \nabla e_v \|_{0,\mathcal{T}_h}^2 + C\sum\limits_{T\in \mathcal{T}_h}\frac{1}{h_T}\left( h_T^{-1}\| e_v \|_{0,T}^2 +h_T\| \nabla e_v \|_{0,T}^2\right) \\
        &\leq Ch^{2s-2}|v|_{s,\Omega}^2,
    \end{align*}
    which yields the estimate (\ref{interpolation_lemma3}). Now we complete the proof.
\end{proof}
\begin{theorem}\label{errorp}
    Let $p$ be the solution of the problem (\ref{pde_cons})-(\ref{pde_bd2}) and $p_h$ be the solution of  the EPG method (\ref{EPG method}). We assume that $p \in H^s(\Omega)$, where $s>\frac{3}{2}$. Then there exists a constant $C>0$ independent of the mesh size $h$ such that
    \begin{align}\label{errorp_equal}
        \interleave  p-p_h \interleave_{V_h^1} \leq Ch^{s-1}|p|_{s,\Omega}.
    \end{align}
\end{theorem}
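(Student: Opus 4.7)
\smallskip

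\noindent\textbf{Proof proposal.} The plan is the standard Petrov--Galerkin error template: split the error through the interpolant $I_h p$, use the inf-sup condition \eqref{infsup_ori2} and Galerkin orthogonality to bound the discrete part, and bound the interpolation part with Lemma \ref{interpolation}. Explicitly, I would write
\begin{align*}
\interleave p - p_h \interleave_{V_h^1} \leq \interleave p - I_h p \interleave_{V_h^1} + \interleave I_h p - p_h \interleave_{V_h^1},
\end{align*}
where the first term is immediately controlled by $C h^{s-1}|p|_{s,\Omega}$ via \eqref{interpolation_lemma3}, so the work lies in the second term.

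Since $I_h p - p_h \in V_h^1$, the inf-sup estimate \eqref{infsup_ori2} gives
\begin{align*}
\interleave I_h p - p_h \interleave_{V_h^1} \leq C \sup_{q_h \in V_h^2,\, q_h \neq 0} \frac{a(I_h p - p_h,\, q_h)}{\interleave q_h \interleave_{V_h^2}}.
\end{align*}
The key ingredient is then Galerkin orthogonality $a(p - p_h, q_h) = 0$ for all $q_h \in V_h^2$, which allows us to replace $p_h$ by $p$ in the numerator and reduces the problem to bounding $a(I_h p - p, q_h)$. Because the regularity $p \in H^s(\Omega)$ with $s > 3/2$ makes $K\nabla p \cdot \bfn_e$ well-defined on every face and $I_h p - p$ lies in a space on which $\interleave \cdot \interleave_{V_h^1}$ is meaningful, the continuity estimate \eqref{continuity_equal} extends directly (same H\"older/trace argument) to give
\begin{align*}
|a(I_h p - p, q_h)| \leq C \interleave I_h p - p \interleave_{V_h^1} \interleave q_h \interleave_{V_h^2} \leq C h^{s-1}|p|_{s,\Omega} \interleave q_h \interleave_{V_h^2},
\end{align*}
and the triangle inequality then concludes \eqref{errorp_equal}.

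The step that deserves the most care is the Galerkin orthogonality, since the test space $V_h^2$ contains discontinuous piecewise constants and the bilinear form \eqref{bilinear} carries face integrals with jumps. I would verify consistency $a(p, q_h) = l(q_h)$ by performing elementwise integration by parts on $\sum_T \int_T K\nabla p \cdot \nabla q_h$, using the identity
\begin{align*}
\sum_T \int_{\partial T} (K\nabla p \cdot \bfn_T) q_h \,d\sigma = \sum_{e \in \Gamma_h}\int_e [K\nabla p \cdot \bfn_e]\{q_h\}\,d\sigma + \sum_{e \in \Gamma_h}\int_e \{K\nabla p \cdot \bfn_e\}[q_h]\,d\sigma + \text{(boundary)},
\end{align*}
together with the facts that $[K\nabla p \cdot \bfn_e] = 0$ on interior faces (since $p$ solves the PDE in the distributional sense and $s>3/2$), $[p] = 0$ on $\Gamma_h$ (continuity of $p$), $[p] = p_D$ on $\Gamma_{h,D}$, and $-K\nabla p \cdot \bfn = g_N$ on $\Gamma_{h,N}$. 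These match precisely the Dirichlet boundary term and Neumann term in $l(q_h)$, so consistency follows and subtracting \eqref{EPG method} yields the orthogonality. Once this is in place, the remaining estimates are routine applications of Theorem \ref{continuity}, Theorem \ref{corecivity}, and Lemma \ref{interpolation}.
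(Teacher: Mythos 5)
Your proposal follows essentially the same route as the paper's proof: Galerkin orthogonality from consistency, the inf-sup condition \eqref{infsup_ori2} applied to $I_h p - p_h$, continuity to pass to $\interleave p - I_h p\interleave_{V_h^1}$, and Lemma \ref{interpolation} with the triangle inequality to conclude. Your additional verification of the consistency identity $a(p,q_h)=l(q_h)$ via elementwise integration by parts is a detail the paper asserts without proof, so your write-up is, if anything, slightly more complete.
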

\begin{proof}
    For any $q_h \in V_h^2$, we have 
    $
        a(p,q_h)=l(q_h)$  and
      $  a(p_h,q_h)=l(q_h),
    $
    which directly yield that \[a(p-p_h,q_h)=0.\] 
    Following (\ref{continuity_equal}), (\ref{infsup_ori2}) and the above orthogonality property, we have
    \begin{align}
        C\interleave  p_h- I_h p\interleave_{V_h^1} &\leq \mathop{sup}\limits_{q_h \neq 0}\frac{a(p_h-I_h p,q_h)}{\interleave q_h\interleave_{V_h^2}}\label{galerkin} \\
        &\leq \mathop{sup}\limits_{q_h \neq 0}\frac{a(p-I_h p,q_h)}{\interleave q_h\interleave_{V_h^2}}\notag \\
        &\leq C\interleave  p-I_h p \interleave_{V_h^1}.\notag
    \end{align}
    Therefore, by the triangle inequality, (\ref{interpolation_lemma3}) and (\ref{galerkin}), we have
    \begin{align*}
        \interleave  p-p_h\interleave_{V_h^1} &\leq \interleave  p- I_h p\interleave_{V_h^1}+\interleave  I_h p -p_h\interleave_{V_h^1} \\
        &\leq C\interleave  p- I_h p\interleave_{V_h^1} \\
        &\leq Ch^{s-1}|p|_{s,\Omega},
    \end{align*}
    which yields the desired estimate (\ref{errorp_equal}). 
\end{proof}
\section{Local mass conservation and application to the species transport system}\label{section4}
After the discrete solution of pressure in the EPG method is obtained by the decoupled algorithm, the velocity $\bfu_h$ can be recovered as follows:
\begin{align}
    &\bfu_h = - K \nabla(p_h^c + p_h^b),\quad &x \in T,\quad T \in \mathcal{T}_h, \label{conservation_1}\\
    &\bfu_h \cdot \bfn = g_N,\quad &x \in \Gamma_{h,N},\\
    &\bfu_h \cdot \bfn = -\{ K \nabla(p_h^c + p_h^b)\cdot \bfn \}|_{e\in \Gamma_h},\quad &x \in \Gamma_h, \\
    &\bfu_h \cdot \bfn = -( K \nabla(p_h^c + p_h^b)\cdot \bfn )|_{e\in \Gamma_{h,D}},\quad &x \in \Gamma_{h,D}.\label{conservation_4}
\end{align}
\begin{theorem}\label{erroru}
    Let $\bfu$ be the velocity of the Darcy flow in (\ref{pde_cons}) and $\bfu_h$ be the discrete velocity solution defined as in (\ref{conservation_1})-(\ref{conservation_4}). If all the assumptions in Theorem \ref{errorp} hold, then there exists a constant $C>0$ independent of the mesh size $h$ such that
    \begin{align}
        &\| \bfu-\bfu_h \|_{0,\mathcal{T}_h} \leq Ch^{s-1}|p|_{s,\Omega},\label{erroru_equal1} \\
        &\| \bfu \cdot \bfn - \bfu_h \cdot \bfn \|_{0,\Gamma_h \cup \Gamma_{h,D}} \leq Ch^{s-\frac{3}{2}}|p|_{s,\Omega}.\label{erroru_equal2}
    \end{align}
\end{theorem}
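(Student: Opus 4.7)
The plan is to prove the two estimates separately. For \eqref{erroru_equal1}, I would use the elementwise identity $\bfu - \bfu_h = -K\nabla(p - p_h)$, which is immediate from \eqref{conservation_1} and $\bfu=-K\nabla p$. Boundedness of $K$ then gives
\[
\|\bfu - \bfu_h\|_{0,\mathcal{T}_h} \;\leq\; C\,\|K^{1/2}\nabla(p - p_h)\|_{0,\mathcal{T}_h} \;\leq\; C\,\interleave p - p_h\interleave_{V_h^1},
\]
and Theorem \ref{errorp} closes the argument in one line.

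For \eqref{erroru_equal2} the work is on edges/faces. I would first observe that $p \in H^s(\Omega)$ with $s > 3/2$ implies $\bfu = -K\nabla p \in H(\dive;\Omega)$, so its normal trace is single-valued across interior faces; in particular $\{-K\nabla p \cdot \bfn_e\} = -K\nabla p \cdot \bfn_e$ on each $e \in \Gamma_h$. Combined with the definitions of $\bfu_h \cdot \bfn$ on $\Gamma_h$ and $\Gamma_{h,D}$, both cases reduce to controlling (an element-side trace of) $K\nabla(p - p_h) \cdot \bfn_e$ in $L^2(e)$ uniformly over faces.

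I would then split $p - p_h = \eta + \xi$ with $\eta = p - I_h p$ and $\xi = I_h p - p_h$, where $I_h$ is the interpolation defined in \eqref{interpolation_def}. For the interpolation part $\eta$, a scaled trace inequality combined with \eqref{interpolation_lemma2} and a Bramble-Hilbert type argument yields the local bound $\|\nabla\eta \cdot \bfn_e\|_{0,e} \leq C h^{s-3/2}|p|_{s,\omega_T}$. For the discrete part $\xi$, which is a (piecewise) polynomial on each element, I would apply the standard inverse trace inequality $\|\nabla\xi \cdot \bfn_e\|_{0,e} \leq C h_T^{-1/2}\|\nabla\xi\|_{0,T}$, and bound the right-hand side via the triangle inequality,
\[
\|\nabla\xi\|_{0,T} \;\leq\; \|\nabla(p - I_h p)\|_{0,T} + \|\nabla(p - p_h)\|_{0,T} \;\leq\; C h^{s-1}|p|_{s,\Omega},
\]
using \eqref{interpolation_lemma2} and Theorem \ref{errorp}. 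Squaring and summing over all faces, with each element appearing at most $d+1$ times in the edge sum, produces the global bound $C h^{s-3/2}|p|_{s,\Omega}$. The triangle inequality applied to $\eta+\xi$ then yields \eqref{erroru_equal2}.

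The main obstacle is the trace bound on the interpolation error $\nabla\eta$ at the regularity threshold $3/2 < s \leq 2$, since a naive trace inequality would require $|p|_{2,T}$, which need not be finite. A careful scaling argument in the fractional Sobolev setting (or invoking a refined trace-interpolation estimate) is required here. The remaining ingredients---an inverse inequality on the finite-element part, the energy error bound from Theorem \ref{errorp}, and the bookkeeping that ensures each element contributes a bounded number of times to the face sum---are standard. The half-power loss in passing from $h^{s-1}$ in the volume to $h^{s-3/2}$ on faces is exactly what the inverse trace inequality costs.
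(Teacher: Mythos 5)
Your proposal is correct and follows essentially the same route as the paper: the volume estimate is the one-line consequence of Theorem \ref{errorp}, and the face estimate is obtained by inserting the interpolant $I_h p$ (the paper's $\overline{\bfu}=-K\nabla I_h p$), treating the interpolation part by a scaled trace estimate and the discrete part by an inverse trace inequality combined with the triangle inequality and the energy error bound. You even flag the fractional-regularity trace issue for $\nabla(p-I_h p)$ when $3/2<s\le 2$, a technical point the paper's proof passes over silently.
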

\begin{proof}
    Firstly, by (\ref{errorp_equal}), we directly obtain
    \begin{align*}
        \| \bfu-\bfu_h \|_{0,\mathcal{T}_h}^2
        \leq C \interleave  p-p_h \interleave_{V_h^1}^2 
        \leq Ch^{2s-2}|p|_{s,\Omega}^2,
    \end{align*}
    which yields the error estimate (\ref{erroru_equal1}). Next, in order to estimate the error on the interior faces or edges, we let $\overline{p}=I_h p \in V_h^1$ be an interpolation of $p$ and $\overline{\bfu}=-K\nabla \overline{p}$ be the corresponding velocity within each element. On the interface $e=\partial T_i \cap \partial T_j\in \Gamma_h$, $\overline{\bfu}$ is defined as follows:
    \begin{align*}
        \overline{\bfu}|_{e}:=\{ \overline{\bfu} \}
        =\frac{\overline{\bfu}|_{T_i}+\overline{\bfu}|_{T_j}}{2}.
    \end{align*}
    We easily have
    \begin{align}\label{mean_equal}
        \sum\limits_{e\in \Gamma_h}\| \bfu \cdot \bfn_e - \bfu_h \cdot \bfn_e \|_{0,e}^2
        \leq 2\sum\limits_{e\in \Gamma_h}\| \bfu \cdot \bfn_e - \overline{\bfu} \cdot \bfn_e \|_{0,e}^2
        + 2\sum\limits_{e\in \Gamma_h}\| \overline{\bfu} \cdot \bfn_e - \bfu_h \cdot \bfn_e \|_{0,e}^2.
    \end{align}
    For the right-hand side of (\ref{mean_equal}), the two terms will be analyzed respectively. The first term can be bounded by the trace inequality and (\ref{interpolation_lemma2}) as follows:
    \begin{align}\label{trace_conclusion1}
        \sum\limits_{e\in \Gamma_h}\| \bfu \cdot \bfn_e - \overline{\bfu} \cdot \bfn_e \|_{0,e}^2
        \leq C \sum\limits_{T \in \mathcal{T}_h}\frac{1}{h_T}\| \bfu - \overline{\bfu} \|_{0,T}^2
        \leq Ch^{2s-3}|p|_{s,\Omega}^2.
    \end{align}
    Then we can estimate the second term by the trace inequality as follows:
    \begin{align}\label{trace_equal1}
        \sum\limits_{e\in \Gamma_h}\| \overline{\bfu} \cdot \bfn_e - \bfu_h \cdot \bfn_e \|_{0,e}^2
        &\leq C\sum\limits_{\partial T_i \cap \partial T_j = e\in \Gamma_h}\left( \left\|  \overline{\bfu}|_{T_i} - \bfu_h|_{T_i} \right\|_{0,e}^2 +\left\| \overline{\bfu}|_{T_j} - \bfu_h|_{T_j} \right\|_{0,e}^2\right) \\
        &\leq C \sum\limits_{T \in \mathcal{T}_h}\frac{1}{h_T}\| \bfu_h - \overline{\bfu} \|_{0,T}^2.\notag
    \end{align}
    By (\ref{interpolation_lemma2}) and (\ref{erroru_equal1}), the right-hand side of (\ref{trace_equal1}) can be bounded as follows:
    \begin{align}\label{trace_conclusion2}
        \sum\limits_{T \in \mathcal{T}_h}\frac{1}{h_T}\| \bfu_h - \overline{\bfu} \|_{0,T}^2
        \leq C\sum\limits_{T \in \mathcal{T}_h}\frac{1}{h_T}\| \bfu_h - \bfu \|_{0,T}^2
        + C\sum\limits_{T \in \mathcal{T}_h}\frac{1}{h_T}\| \bfu - \overline{\bfu} \|_{0,T}^2
        \leq Ch^{2s-3}|p|_{s,\Omega}^2.
    \end{align}
    By (\ref{trace_conclusion1}) and (\ref{trace_conclusion2}), we complete the error estimate (\ref{erroru_equal2}) on the interior faces or edges. Besides, the error estimate on Dirichlet boundaries could be obtained similarly and we complete the proof.
\end{proof}

We note that the local mass conservation is an important property which is significant in numerous applications. The velocity $\bfu_h$ recovered by the EPG method is locally mass-conservative, which will be shown in the following.
\begin{theorem}
    The velocity $\bfu_h$ defined in (\ref{conservation_1})-(\ref{conservation_4}) is locally mass-conservative, which means
    \begin{align}\label{conservation_equal}
        \int_{\partial T} \bfu_h \cdot \bfn d\sigma = \int_{T} fdx,\quad \forall T \in \mathcal{T}_h.
    \end{align}
\end{theorem}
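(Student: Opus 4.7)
The plan is to test the EPG variational equation (\ref{EPG method}) with the characteristic function of a single element, which lies in the test space $V_h^2$ (by taking $q_h^c = 0$ and $q_h^0 = \chi_T \in M^0(\mathcal{T}_h)$), and then recognize that each of the remaining boundary terms matches, edge by edge, the recovery formulas (\ref{conservation_1})--(\ref{conservation_4}) for $\bfu_h \cdot \bfn$.

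First I would set $q_h = \chi_T$ in the bilinear form (\ref{bilinear}). Since $\nabla \chi_T = 0$ on every element, both the volume integral $\sum_{T'} \int_{T'} K \nabla p_h \cdot \nabla \chi_T \, dx$ and the third boundary term $\sum_e \int_e \{K \nabla \chi_T \cdot \bfn_e\}[p_h]\, d\sigma$ vanish identically. Similarly, in the linear functional $l(\chi_T)$, the Dirichlet term involving $\nabla \chi_T$ drops out, leaving only $\int_T f\, dx - \int_{\partial T \cap \Gamma_{h,N}} g_N \, d\sigma$. So the EPG identity $a(p_h,\chi_T) = l(\chi_T)$ reduces to
\begin{equation*}
    -\sum_{e \in \Gamma_h \cup \Gamma_{h,D},\, e \subset \partial T} \int_e \{K \nabla p_h \cdot \bfn_e\}\, [\chi_T]\, d\sigma = \int_T f\, dx - \int_{\partial T \cap \Gamma_{h,N}} g_N \, d\sigma.
\end{equation*}

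The next step, which is the only one requiring care, is the sign-bookkeeping between the globally fixed normal $\bfn_e$ and the outward normal $\bfn$ to $\partial T$. For an interior face $e = \partial T \cap \partial T'$, the orientation convention in (\ref{jump}) says $\bfn_e$ is exterior to $T_i$, and $[\chi_T] = 1$ when $T = T_i$ (so $\bfn_e = \bfn|_{\partial T}$) and $[\chi_T] = -1$ when $T = T_j$ (so $\bfn_e = -\bfn|_{\partial T}$). In either case $[\chi_T]\, \bfn_e = \bfn$ on $e$, so by (\ref{conservation_1})--(\ref{conservation_4}) the integrand $-\{K \nabla p_h \cdot \bfn_e\}[\chi_T]$ coincides with $\bfu_h \cdot \bfn$. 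For Dirichlet faces $e \subset \partial T \cap \Gamma_{h,D}$, the convention that $[\chi_T]$ means the trace value gives $[\chi_T]=1$, and $\bfn_e$ is the outward normal, so again the integrand equals $\bfu_h \cdot \bfn$. Summing gives $\int_{\partial T \cap (\Gamma_h \cup \Gamma_{h,D})} \bfu_h \cdot \bfn\, d\sigma$ on the left.

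Finally, I would move the Neumann contribution to the left and invoke the boundary prescription $\bfu_h \cdot \bfn = g_N$ on $\Gamma_{h,N}$, which completes the boundary $\partial T$ and yields exactly (\ref{conservation_equal}). The main obstacle, as noted, is purely the sign and orientation analysis of the jump term against the globally-oriented $\bfn_e$; once that is settled, local conservation follows immediately from the variational identity, with no need to invoke the decoupled algorithm, the convergence results, or the bubble construction.
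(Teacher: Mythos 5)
Your proposal is correct and follows exactly the paper's own argument: the paper's proof simply takes $q_h$ to be the indicator function of a single element $T$ in the weak formulation (\ref{EPG method}) and states that (\ref{conservation_equal}) follows directly. Your write-up supplies the sign/orientation bookkeeping between $\bfn_e$ and the outward normal $\bfn$ that the paper leaves implicit, and that bookkeeping is carried out correctly.
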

\begin{proof}
    We take a local element $T \in \mathcal{T}_h$ into consideration. For the weak formulation (\ref{EPG method}), the test function $q_h$ is chosen as follows:
    \begin{align*}
        q_h :=
        \begin{cases}
            &1,\quad x \in T, \\
            &0,\quad else,
        \end{cases}
    \end{align*}
    which directly yields (\ref{conservation_equal}). This completes the proof.
\end{proof}

Next we take the recovered velocity $\bfu_h$ into (\ref{transport}) to simulate the concentration in porous media. The scheme used to discrete the time derivative term could be explicit or implicit and the finite element space for concentration is chosen as $M^0(\mathcal{T}_h)$.

For the species transport equation, we introduce two fully discrete schemes by choosing different time discrete schemes as follows: Given $c^{n-1}_h \in M^0(\mathcal{T}_h)$, find  $c^{n}_h \in M^0(\mathcal{T}_h)$ such that
\begin{align}
    \sum\limits_{T\in \mathcal{T}_h}\int_T \phi \frac{c_h^n-c_h^{n-1}}{\Delta t} q_h dx
    &+\sum\limits_{e\in\Gamma_h}\int_e c_h^{n-1,*}\bfu_h \cdot \bfn_e [q_h]d\sigma\label{explicit transport scheme} \\
    +\sum\limits_{e\in\Gamma_{in}}\int_e c_I \bfu_h \cdot \bfn_e q_h d\sigma
    &+\sum\limits_{e\in\Gamma_{out}}\int_e c_h^{n-1} \bfu_h \cdot \bfn_e q_h d\sigma=\sum\limits_{T\in \mathcal{T}_h}\int_T \widehat{f} q_h dx,\quad \forall q_h \in M^0(\mathcal{T}_h),\notag
\end{align}
or
\begin{align}
    \sum\limits_{T\in \mathcal{T}_h}\int_T \phi \frac{c_h^n-c_h^{n-1}}{\Delta t} q_h dx
    &+\sum\limits_{e\in\Gamma_h}\int_e c_h^{n,*}\bfu_h \cdot \bfn_e [q_h]d\sigma\label{implicit transport scheme} \\
    +\sum\limits_{e\in\Gamma_{in}}\int_e c_I \bfu_h \cdot \bfn_e q_h d\sigma
    &+\sum\limits_{e\in\Gamma_{out}}\int_e c_h^{n} \bfu_h \cdot \bfn_e q_h d\sigma=\sum\limits_{T\in \mathcal{T}_h}\int_T \widehat{f} q_h dx,\quad \forall q_h \in M^0(\mathcal{T}_h),\notag
\end{align}
where $\Delta t$ is the time step size. Here, (\ref{explicit transport scheme}) corresponds to the explicit scheme while (\ref{implicit transport scheme}) corresponds to the implicit scheme. In fact, when the discrete solution of velocity is locally mass-conservative, the discrete solution of concentration for the transport equation can hold the physical properties which will be shown in the following two theorems.

\begin{theorem}\label{explicit thm}
If the time step size $\Delta t$ is small enough, the solution of concentration of the transport equation (\ref{transport})-(\ref{transport_bc}) approximated by the explicit scheme (\ref{explicit transport scheme}) has the positive and upper bound preserving properties, which means
    \begin{align}
        &c_h^{n}\geq0, \qquad {\rm if} \ c_h^{n-1}\geq0,\label{explicit_property1} \\
        &c_h^{n}\leq 1, \qquad {\rm if} \  c_h^{n-1}\leq 1.\label{explicit_property2}
    \end{align}
\end{theorem}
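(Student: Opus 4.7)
The plan is to localize the discrete scheme to a single element by exploiting that the test space is $M^0(\mathcal{T}_h)$, and then read off the update as a convex-combination-plus-source formula. Concretely, I would fix $T\in\mathcal{T}_h$ and plug in $q_h=\chi_T$ (the characteristic function of $T$) into (\ref{explicit transport scheme}). Since $c_h^n$ is piecewise constant, writing $c_T^n:=c_h^n|_T$ and $\phi_T:=\phi|_T$, this collapses to a scalar identity of the form
\begin{equation*}
\phi_T|T|\,\frac{c_T^n-c_T^{n-1}}{\Delta t}+\sum_{e\subset\partial T}\int_e c^{n-1,*}_e\,\bfu_h\cdot\bfn_T\,d\sigma=\int_T\widehat{f}\,dx,
\end{equation*}
with $\bfn_T$ the outward normal on $\partial T$, and with $c_I$ (resp.\ $c_T^{n-1}$) used in place of $c^{n-1,*}_e$ on inflow (resp.\ outflow) boundary edges. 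Splitting $\partial T$ into the outflow part $\partial T^+$ (where $\bfu_h\cdot\bfn_T\ge 0$, and thus $c^{n-1,*}_e=c_T^{n-1}$) and the inflow part $\partial T^-$ (where the upwind value comes from the neighbor $T'$ or from $c_I$) will recast the identity as
\begin{equation*}
c_T^n=\Big(1-\tfrac{\Delta t}{\phi_T|T|}\Phi_T^{+}\Big)c_T^{n-1}+\tfrac{\Delta t}{\phi_T|T|}\sum_{e\in\partial T^-}\Phi_{T,e}^{-}\,c^{n-1,*}_e+\tfrac{\Delta t}{\phi_T|T|}\int_T\widehat{f}\,dx,
\end{equation*}
where $\Phi_T^{+}:=\int_{\partial T^+}\bfu_h\cdot\bfn_T\,d\sigma\ge 0$ and $\Phi_{T,e}^{-}:=-\int_e\bfu_h\cdot\bfn_T\,d\sigma\ge 0$.

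For (\ref{explicit_property1}), every coefficient in front of $c_T^{n-1}$ and the $c^{n-1,*}_e$ is non-negative once we impose the CFL-type restriction $\Delta t\le \min_{T\in\mathcal{T}_h}\phi_T|T|/\Phi_T^{+}$; the inflow upwind values are non-negative by the induction hypothesis together with $c_I\ge 0$, and the source term contributes non-negatively under the standing sign assumption $\widehat{f}\ge 0$. Hence $c_T^n\ge 0$ for every $T$, which is exactly (\ref{explicit_property1}).

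For (\ref{explicit_property2}), the essential new ingredient is the local mass conservation from Theorem~\ref{conservation_equal}, i.e.\ $\Phi_T^{+}-\sum_{e\in\partial T^-}\Phi_{T,e}^{-}=\int_T f\,dx$. Subtracting $1$ from the local update and bounding each $c^{n-1,*}_e\le 1$ and $c_T^{n-1}\le 1$ gives
\begin{equation*}
c_T^n-1\le \Big(1-\tfrac{\Delta t}{\phi_T|T|}\Phi_T^{+}\Big)(c_T^{n-1}-1)+\tfrac{\Delta t}{\phi_T|T|}\int_T(\widehat{f}-f)\,dx,
\end{equation*}
so under the same CFL condition and the physical consistency assumption $\widehat{f}\le f$ (which simply says that the injected concentration does not exceed one), the right-hand side is non-positive, yielding $c_T^n\le 1$.

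The only real obstacle I foresee is the treatment of edges on which $\bfu_h\cdot\bfn_e$ changes sign, since the upwind value is then ambiguous; I would resolve this by splitting each edge pointwise into its $\{\bfu_h\cdot\bfn_T\ge 0\}$ and $\{\bfu_h\cdot\bfn_T<0\}$ parts, so that the outflow contribution still multiplies $c_T^{n-1}$ alone and the inflow contribution still multiplies non-negative neighbor values — after which the argument above goes through unchanged. Everything else is bookkeeping: a quantitative CFL bound $\Delta t\le c_*\,h$ (uniform in $h$) follows from the shape-regularity of $\mathcal{T}_h$ and the boundedness of $\bfu_h$ guaranteed by Theorem~\ref{erroru}.
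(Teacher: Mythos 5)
Your proof is correct and follows essentially the same route as the paper's: localize with the characteristic-function test function, write the update as a convex combination, impose a CFL restriction for positivity, and invoke local mass conservation to handle the upper bound via $1-c$. The only difference is that you retain the source terms and make explicit the sign/consistency hypotheses $\widehat{f}\ge 0$ and $\widehat{f}\le f$ (and treat sign-changing edges pointwise), whereas the paper simply assumes the sources vanish; this is a refinement of the same argument, not a different one.
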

\begin{proof}   
    It suffices to consider the case in a local element $T$. For simplicity, we assume that the source/sink term equals to zero and all the faces ($d=3$) or edges ($d=2$) of $T$ are in $\Gamma_h$. We denote the inflow and outflow interior faces or edges sets of $\partial T$ as follows:
    \begin{align}
        &\mathcal{E}^T_1 := \{ e\in \partial T: \bfu_h \cdot \bfn_e \geq 0\},\label{outflow} \\
        &\mathcal{E}^T_2 := \{ e\in \partial T: \bfu_h \cdot \bfn_e < 0\},\label{inflow}
    \end{align}
where $\partial T \cap \partial \Omega \neq \emptyset$. The local mass conservation on this element $T$ is described as follows:
    \begin{align}\label{explicit_conservation}
        \sum\limits_{e\in \mathcal{E}^T_1}\int_{e}\bfu_h \cdot \bfn_e d\sigma
        + \sum\limits_{e\in \mathcal{E}^T_2}\int_{e} \bfu_h \cdot \bfn_e d\sigma = 0.
    \end{align}
    Let $q_h|_T =1$ and $q_h =0$ on other elements in (\ref{explicit transport scheme}), we have
    \begin{align}
        \int_T \phi \frac{c_h^n-c_h^{n-1}}{\Delta t}dx
        +\sum\limits_{e\in \mathcal{E}^T_1}\int_{e} c_h^{n-1}\bfu_h \cdot \bfn_e d\sigma
        +\sum\limits_{e\in \mathcal{E}^T_2}\int_{e} c_h^{n-1,out}\bfu_h \cdot \bfn_e d\sigma = 0,\label{local_transpose}
    \end{align}
    where $c_h^{n-1,out}$ denotes the concentration on the outflow faces or edges. Since the approximation space for the concentration is $M^0(\mathcal{T}_h)$, following (\ref{local_transpose}) we obtain
    \begin{align}
        c_h^n &= c_h^{n-1}+\frac{\Delta t}{|T|\phi}\left( -\sum\limits_{e\in \mathcal{E}^T_1}\int_{e} c_h^{n-1}\bfu_h \cdot \bfn_e d\sigma - \sum\limits_{e\in \mathcal{E}^T_2}\int_{e} c_h^{n-1,out}\bfu_h \cdot \bfn_e d\sigma \right)\notag \\
        &\geq c_h^{n-1}-\frac{\Delta t}{|T|\phi} \sum\limits_{e\in \mathcal{E}^T_1}\int_{e} c_h^{n-1}\bfu_h \cdot \bfn_e d\sigma\notag \\
        &=\left(1-\frac{\Delta t}{|T|\phi} \sum\limits_{e\in \mathcal{E}^T_1}\int_{e} \bfu_h \cdot \bfn_e d\sigma\right)c_h^{n-1},\notag
    \end{align}
    where $|T|$ denotes the volume ($d=3$) or the area ($d=2$) of $T$. Thus, for any $T \in \mathcal{T}_h \ (\partial T \cap \partial\Omega = \emptyset)$ , if we assume that the time step size $\Delta t$ satisfies the restriction as follows:
    \begin{align}
        \Delta t <  \frac{|T|\phi}{\sum\limits_{e\in \mathcal{E}^T_1}\int_{e} \bfu_h \cdot \bfn_e d\sigma}, ,\label{timestep}
    \end{align}
    then this completes the proof of the positive-preserving property (\ref{explicit_property1}) if $c_h^{n-1}|_T>0$. 

    Next, by (\ref{explicit_conservation}), (\ref{local_transpose}) and (\ref{timestep}), we have
    \begin{align*}
        1-c_h^n &= 1-c_h^{n-1}+\frac{\Delta t}{|T|\phi}\left(-c_h^{n-1}\sum\limits_{e\in \mathcal{E}^T_2}\int_{e} \bfu_h \cdot \bfn_e d\sigma +\sum\limits_{e\in \mathcal{E}^T_2}\int_{e} c_h^{n-1,out}\bfu_h \cdot \bfn_e d\sigma \right) \\
        &= 1-c_h^{n-1}+\frac{\Delta t}{|T|\phi}\left( (1-c_h^{n-1})\sum\limits_{e\in \mathcal{E}^T_2}\int_{e} \bfu_h \cdot \bfn_e d\sigma +\sum\limits_{e\in \mathcal{E}^T_2}\int_{e} (c_h^{n-1,out}-1)\bfu_h \cdot \bfn_e d\sigma \right)\\
        &\geq 1-c_h^{n-1}+\frac{\Delta t}{|T|\phi} (1-c_h^{n-1})\sum\limits_{e\in \mathcal{E}^T_2}\int_{e} \bfu_h \cdot \bfn_e d\sigma \\
        &= (1-c_h^{n-1})\left(1+\frac{\Delta t}{|T|\phi}\sum\limits_{e\in \mathcal{E}^T_2}\int_{e} \bfu_h \cdot \bfn_e d\sigma\right) \\
        &= (1-c_h^{n-1})\left(1-\frac{\Delta t}{|T|\phi}\sum\limits_{e\in \mathcal{E}^T_1}\int_{e} \bfu_h \cdot \bfn_e d\sigma\right)\geq 0,
    \end{align*}
    which completes the proof of the upper bound preserving property (\ref{explicit_property2}) if $c_h^{n-1}|_T\leq 1$.
    
    For the case of $ T \in \mathcal{T}_h$, $\partial T \cap \partial\Omega \neq \emptyset$, we can similarly derive (\ref{explicit_property1}) and (\ref{explicit_property2}) under the similar restriction of the time step size.
\end{proof}
\begin{theorem}\label{implicit thm}
    The solution of concentration of the transport equation (\ref{transport})-(\ref{transport_bc}) approximated by the implicit scheme (\ref{implicit transport scheme}) has the positive and upper bound preserving properties, which means
    \begin{align}
        &c_h^{n}\geq0, \qquad {\rm if} \  c_h^{n-1}\geq 0,\label{implicit_property1} \\
        &c_h^{n}\leq 1, \qquad {\rm if}\  c_h^{n-1}\leq 1.\label{implicit_property2}
    \end{align}
\end{theorem}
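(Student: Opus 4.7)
The plan is to mimic the explicit proof of Theorem~\ref{explicit thm}, but since the implicit scheme couples $c_h^n$ across all elements, I cannot write an elementwise update formula. Instead I will argue by a discrete minimum/maximum principle: assume for contradiction that the extremum of $c_h^n$ violates the bound at some element $T^*$, then derive a contradiction using local mass conservation and the sign of the upwind flux.

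First, for the positivity-preserving property (\ref{implicit_property1}), I will assume, as in the proof of Theorem~\ref{explicit thm}, that the source/sink term vanishes and that $\partial T \cap \partial\Omega = \emptyset$, and will define the inflow/outflow face sets $\mathcal{E}_1^T, \mathcal{E}_2^T$ exactly as in (\ref{outflow})--(\ref{inflow}). Suppose $c_h^{n-1} \geq 0$ globally and that $m := \min_{T\in\mathcal{T}_h} c_h^n|_T$ is attained at $T^*$ with $m<0$. Testing (\ref{implicit transport scheme}) with $q_h = 1$ on $T^*$ and $q_h = 0$ elsewhere, and using that $c_h^{n,*}$ picks the upwind value, I obtain the local identity
\begin{equation*}
|T^*|\phi\,\frac{c_h^n|_{T^*} - c_h^{n-1}|_{T^*}}{\Delta t}
+ c_h^n|_{T^*}\!\!\sum_{e\in\mathcal{E}_1^{T^*}}\!\!\int_e \bfu_h\cdot\bfn_e\, d\sigma
+ \!\!\sum_{e\in\mathcal{E}_2^{T^*}}\!\!\int_e c_h^{n,out}\bfu_h\cdot\bfn_e\, d\sigma = 0,
\end{equation*}
where on $\mathcal{E}_2^{T^*}$ the value $c_h^{n,out}$ is the concentration on the neighbouring element, hence $c_h^{n,out}\ge m = c_h^n|_{T^*}$. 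Since $\bfu_h\cdot\bfn_e<0$ on $\mathcal{E}_2^{T^*}$, multiplying flips the inequality and gives $c_h^{n,out}\bfu_h\cdot\bfn_e \le c_h^n|_{T^*}\bfu_h\cdot\bfn_e$. Substituting this bound and then invoking local mass conservation (\ref{explicit_conservation}) collapses the two flux sums to zero, so
\begin{equation*}
|T^*|\phi\,\frac{c_h^n|_{T^*} - c_h^{n-1}|_{T^*}}{\Delta t} \;\ge\; 0,
\end{equation*}
which forces $c_h^n|_{T^*} \ge c_h^{n-1}|_{T^*} \ge 0$, contradicting $m<0$. Notably, no restriction on $\Delta t$ appears, which is the advantage of the implicit treatment.

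For the upper-bound property (\ref{implicit_property2}) I will run the symmetric argument on the maximum: let $M:=\max_T c_h^n|_T$ be attained at $T^*$ with $M>1$, so that $c_h^{n,out}\le M$ on every inflow face of $T^*$. Because $\bfu_h\cdot\bfn_e<0$ there, the inequality reverses: $c_h^{n,out}\bfu_h\cdot\bfn_e \ge M\,\bfu_h\cdot\bfn_e$; substituting and again applying local mass conservation yields $c_h^n|_{T^*}\le c_h^{n-1}|_{T^*}\le 1$, a contradiction. Boundary elements with $\partial T\cap\partial\Omega\neq\emptyset$ are handled as in Theorem~\ref{explicit thm} by treating the inflow boundary terms $\int_e c_I\bfu_h\cdot\bfn_e\,q_h\,d\sigma$ with $0\le c_I\le 1$ and noting that the outflow boundary contribution $\int_e c_h^n\bfu_h\cdot\bfn_e\,q_h\,d\sigma$ has exactly the form already controlled in the interior case.

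The main obstacle, compared with the explicit argument, is that I cannot solve element-by-element for $c_h^n$, so the proof is necessarily nonconstructive: I need a discrete minimum/maximum principle based on selecting the extremal element and exploiting the sign structure of the upwind fluxes. The two ingredients that make this work are (i) the fact that $c_h^{n,*}$ at a face of the extremal element $T^*$ is always bounded on the correct side by $c_h^n|_{T^*}$, and (ii) the local mass conservation property (\ref{explicit_conservation}), which allows the two flux sums to telescope to zero. Once these are in place, the contradiction is immediate and $\Delta t$-uniform.
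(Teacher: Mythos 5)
Your argument is correct, but it takes a genuinely different route from the paper. The paper does not use a discrete maximum principle at an extremal element: it assembles the global linear system $M_hC_h^n=F_h$ from the local identities, observes that upwinding and the $\frac{|T|\phi}{\Delta t}$ mass term make $M_h$ an $L$-matrix that is column strictly diagonally dominant, invokes the Gershgorin circle theorem to conclude $M_h$ is an $M$-matrix with $M_h^{-1}\geq 0$, and reads off $C_h^n\geq 0$ from $F_h\geq 0$; the upper bound is then obtained by writing the analogous system for $1-c_h^n$. Your extremal-element argument uses exactly the same two structural ingredients (the upwind choice of $c_h^{n,*}$ and the local conservation identity that telescopes the flux sums), but deploys them locally rather than through matrix monotonicity, which makes the proof more elementary and self-contained; it also transparently exhibits where the $\Delta t$-uniformity comes from. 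What the paper's route buys that yours does not is well-posedness of the implicit step: the $M$-matrix property gives nonsingularity of $M_h$, hence existence and uniqueness of $c_h^n$, whereas your maximum principle presupposes that a solution exists. This is a minor gap relative to the statement as written (which speaks of ``the solution''), and it is easily repaired --- applying your own extremal argument to the difference of two solutions of the homogeneous system shows $M_h$ is injective, hence invertible --- but you should say so explicitly. Your treatment of boundary elements via $0\leq c_I\leq 1$ is correct and no more terse than the paper's own.
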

\begin{proof}
    For simplicity, we assume that the source/sink term is zero. Now we consider the case in a local element $T$. Similar to the definitions of (\ref{outflow}) and (\ref{inflow}), we denote the inflow and outflow interior faces or edges sets of $\partial T$ as follows:
    \begin{align*}
        &\mathcal{E}^T_1 := \{ e\in \partial T \cap \Gamma_h: \bfu_h \cdot \bfn_e \geq 0\}, \\
        &\mathcal{E}^T_2 := \{ e\in \partial T \cap \Gamma_h: \bfu_h \cdot \bfn_e < 0\}.
    \end{align*}
    Besides, we denote the  faces or edges sets of $\partial T$ on the boundaries as follows:
    \begin{align*}
        &\mathcal{E}^T_3 := \{ e\in \partial T \cap \Gamma_{out}: \bfu_h \cdot \bfn_e \geq 0\}, \\
        &\mathcal{E}^T_4 := \{ e\in \partial T \cap \Gamma_{in}: \bfu_h \cdot \bfn_e < 0\}.
    \end{align*}
    The local mass conservation on the element $T$ is described as follows:
    \begin{align}\label{implicit_conservation_4edges}
        \sum\limits_{e\in \mathcal{E}^T_1}\int_{e}\bfu_h \cdot \bfn_e d\sigma
        + \sum\limits_{e\in \mathcal{E}^T_2}\int_{e} \bfu_h \cdot \bfn_e d\sigma 
        + \sum\limits_{e\in \mathcal{E}^T_3}\int_{e}\bfu_h \cdot \bfn_e d\sigma
        + \sum\limits_{e\in \mathcal{E}^T_4}\int_{e} \bfu_h \cdot \bfn_e d\sigma= 0.
    \end{align}
      Let $q_h|_T =1$ and $q_h =0$ on other elements in (\ref{implicit transport scheme}), we obtain
    \begin{align}
        \int_T \phi \frac{c_h^n-c_h^{n-1}}{\Delta t}dx
        &+\sum\limits_{e\in \mathcal{E}^T_1}\int_{e} c_h^{n}\bfu_h \cdot \bfn_e d\sigma
        +\sum\limits_{e\in \mathcal{E}^T_2}\int_{e} c_h^{n,out}\bfu_h \cdot \bfn_e d\sigma\label{local_transpose_allterm} \\
        &+\sum\limits_{e\in \mathcal{E}^T_4}\int_e c_I \bfu_h \cdot \bfn_e d\sigma
        +\sum\limits_{e\in \mathcal{E}^T_3}\int_e c_h^{n} \bfu_h \cdot \bfn_e d\sigma := I_1 + I_2 + I_3 + I_4 + I_5 = 0, \notag
    \end{align}
    where $c_h^{n,out}$ denotes the concentration on the outflow interior faces or edges. 
    By (\ref{local_transpose_allterm}) we consider all the elements $T \in \mathcal{T}_h$ and then we get the linear system $M_hC^n_h = F_h$ for the solution of concentration in (\ref{implicit transport scheme}) where $C^n_h$ denotes the column vector of the concentration $c_h^n$ on all the elements. The term concerning $c^{n-1}_h$ in $I_1$ and $I_4$ in (\ref{local_transpose_allterm}) is assembled on the right-hand side as $F_h$, which yields that the elements in $F_h$ is always non-negative.
    
    For the coefficient matrix $M_h$ which is obviously an $L$-matrix (cf. \cite{Berman1994}), we will observe the $j$-th column of $M_h$ without loss of generality. The $j$-th diagonal element of $M_h$ is obtained from the term concerning $c^n_h$ in $I_1$, $I_2$ and $I_5$ in (\ref{local_transpose_allterm}), which means the positive coefficient on the $j$-th element. Besides, there might be some negative terms in the column vector if $I_2 \neq 0$, whose absolute value is equal to part of the $j$-th diagonal element of $M_h$ while the sign of each value is opposite. This indicates that the coefficient matrix $M_h$ is a column strictly diagonally dominant matrix. By the Gershgorin circle theorem (cf. \cite{Salas1999}), all the real part of eigenvalues of $M_h$ are positive, which directly yields that $M_h$ is a $M$-matrix (cf. \cite{Berman1994}). By the property of $M$-matrix that the inverse of $M_h$ is non-negative, we can conclude that the solution $C^n_h$ of the above linear system is nonnegative, which completes the proof of the positive preserving property (\ref{implicit_property1}) if $c_h^{n-1} \geq 0$. 
    
    Besides, by (\ref{implicit_conservation_4edges}) and (\ref{local_transpose_allterm}), we directly have
    \begin{align*}
        \int_T \phi \frac{(1-c_h^n)-(1-c_h^{n-1})}{\Delta t}dx
        &+\sum\limits_{e\in \mathcal{E}^T_1}\int_{e} (1-c_h^{n})\bfu_h \cdot \bfn_e d\sigma
        +\sum\limits_{e\in \mathcal{E}^T_2}\int_{e} (1-c_h^{n,out})\bfu_h \cdot \bfn_e d\sigma \\
        &+\sum\limits_{e\in \mathcal{E}^T_4}\int_e (1-c_I) \bfu_h \cdot \bfn_e d\sigma
        +\sum\limits_{e\in \mathcal{E}^T_3}\int_e (1-c_h^{n}) \bfu_h \cdot \bfn_e d\sigma \\
       & := J_1 + J_2 + J_3 + J_4 + J_5 = 0.
    \end{align*}
    We can also create the linear system for the solution of $1-c^n_h$ and the analysis for the upper bound preserving property (\ref{implicit_property2}) can be similarly derived if $c_h^{n-1}\leq 1$.  
\end{proof}

We note that in the explicit scheme (\ref{explicit transport scheme}), the time step size $\Delta t$ needs to satisfy the restriction (\ref{timestep}) on all the elements $T \in \mathcal{T}_h$. For the implicit scheme (\ref{implicit transport scheme}), one needs to solve a linear system to get the solution. However,  no restriction of $\Delta t$ occurs in the implicit scheme (\ref{implicit transport scheme}).

\section{Numerical experiments}\label{section5}
In this section, we present several numerical examples in two dimensions to demonstrate the performance of our proposed EPG method. We consider a coupled problem of the Darcy flow and the species transport. The Darcy flow is solved by the CG and EPG methods while the transport equation is solved by the DG method with a piecewise constant approximation. The implicit discrete scheme (\ref{implicit transport scheme}) with uniform time step size is used for the simulation of the species transport. Since the scheme (\ref{implicit transport scheme}) is implicit, the time step size does not need to be small enough. In the following examples, the CG method is implemented for piecewise linear (P1-CG), piecewise quadratic (P2-CG), and piecewise cubic (P3-CG) continuous finite element spaces, and the EPG method is implemented for piecewise linear (P1-EPG), piecewise quadratic (P2-EPG), and piecewise cubic (P3-EPG) continuous finite element spaces coupled with the piecewise bubble functions and the piecewise constant functions as the trial and test spaces. We provide the results of convergence of the methods if the example has an exact solution. Moreover, for the transport problem in each example, the porosity $\phi$ is 0.2, the inflow concentration $c_I$ is one and the initial concentration $c_0$ is zero. Besides, the total number of iterative steps for the implicit scheme (\ref{implicit transport scheme}) is chosen as 100.
\begin{example}\label{example1}
    In this example, we firstly consider the Darcy flow with an exact solution in the unit square domain $\Omega = [0,1]^2$ and the conductivity $\bfK$ is a diagonal tensor with its entry being 1 in the entire domain. The exact pressure is denoted by $p=  (1-x)  y  (1-y)\cos{x}$ and the Dirichlet condition is imposed on the boundary. 
    
    The solutions of the pressure and the velocity are shown in Figure \ref{ex1_fig1}. Here the problem is simulated on a triangular mesh with 512 elements. Furthermore, the convergence results of the pressure and the velocity are displayed in Figure \ref{ex1_fig2} to verify the results in Theorem \ref{errorp} and Theorem \ref{erroru}. Here we use a log-log scale with $x$-axis meaning the number of degrees of freedom and $y$-axis meaning the relative error. The optimal convergence of the CG and EPG methods can be both observed from Figure \ref{ex1_fig2}.
    
\begin{figure}[htbp]
    \includegraphics[width=3.5cm,height=3cm]{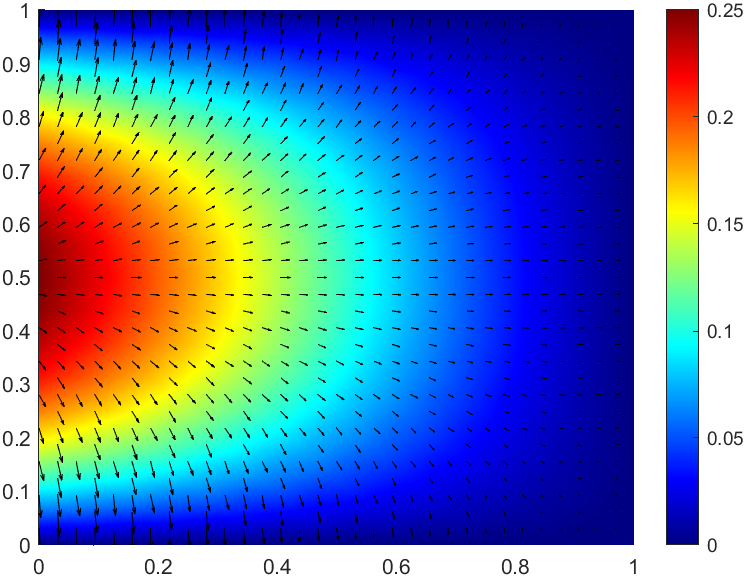}\qquad
    \includegraphics[width=3.5cm,height=3cm]{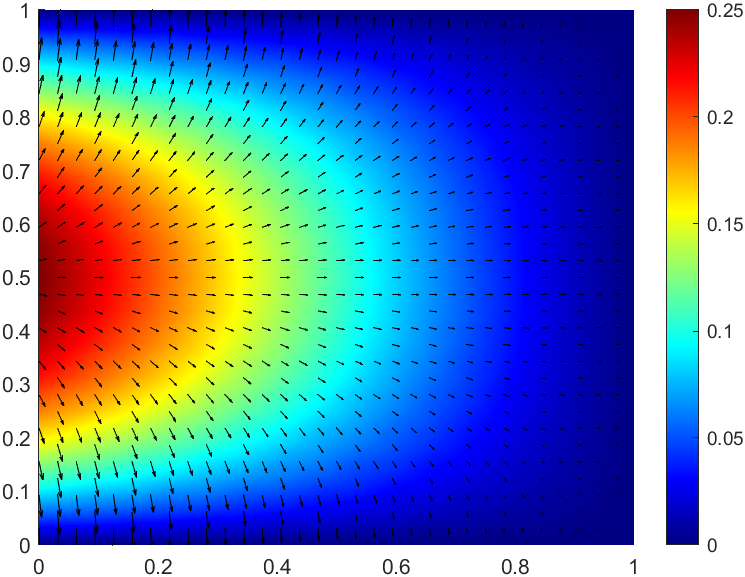}\qquad
    \includegraphics[width=3.5cm,height=3cm]{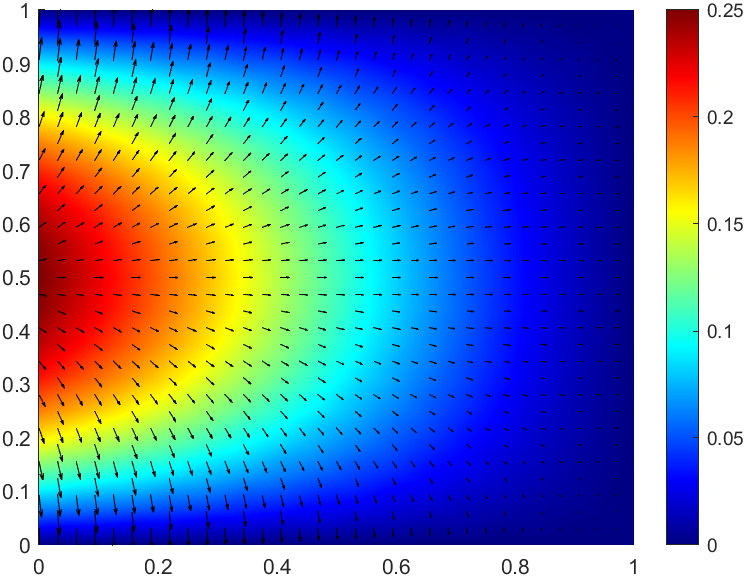}\\
    \includegraphics[width=3.5cm,height=3cm]{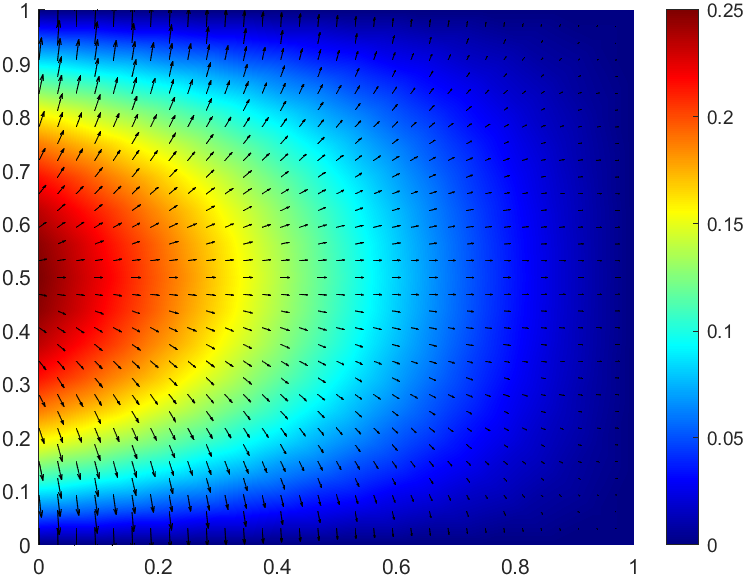}\qquad
    \includegraphics[width=3.5cm,height=3cm]{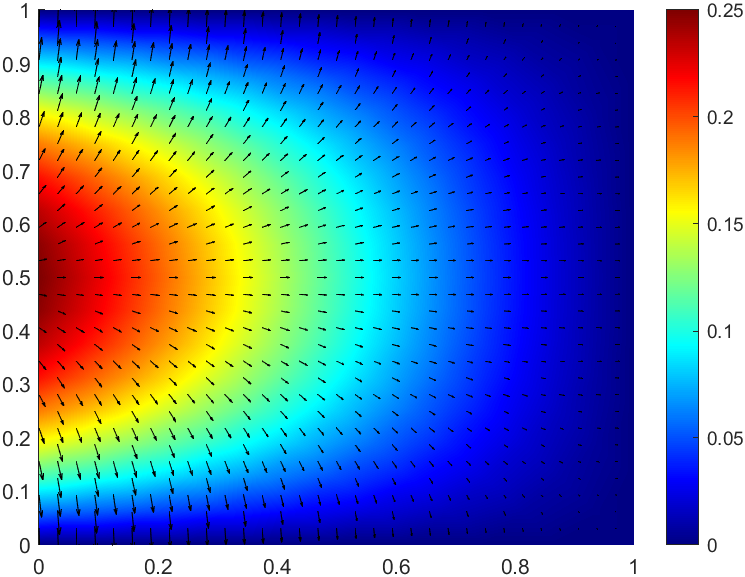}\qquad
    \includegraphics[width=3.5cm,height=3cm]{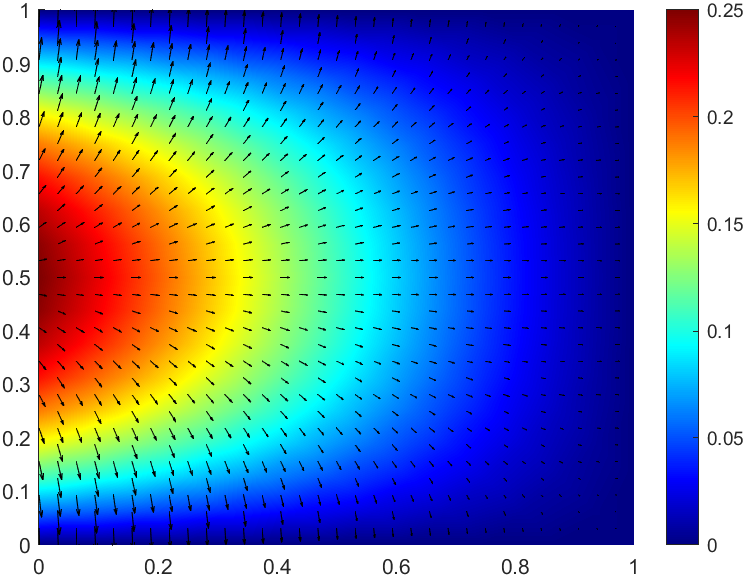}
    \caption{\footnotesize (Example \ref{example1})  The solutions of pressure and Darcy velocity. Top-left: P1-CG. Top-middle: P2-CG. Top-right: P3-CG. Bottom-left: P1-EPG. Bottom-middle: P2-EPG. Bottom-right: P3-EPG.}
    \label{ex1_fig1}
\end{figure}

\begin{figure}[htbp]
    \includegraphics[width=3.5cm,height=3cm]{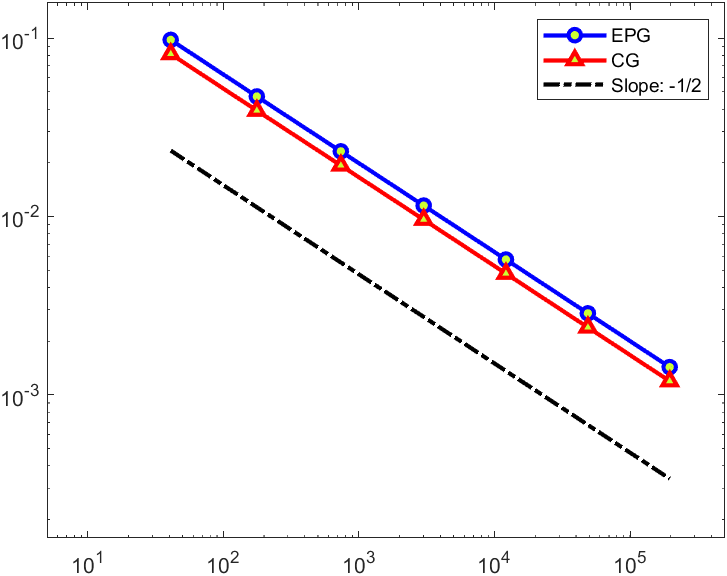}\qquad
    \includegraphics[width=3.5cm,height=3cm]{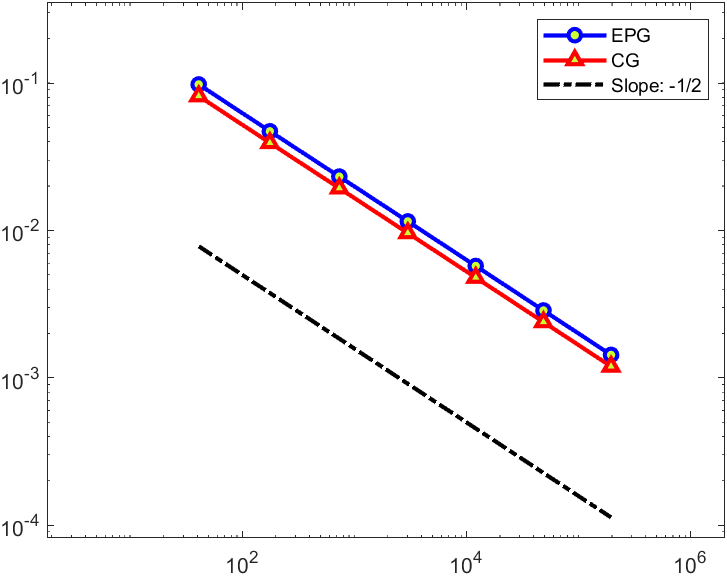}\qquad
    \includegraphics[width=3.5cm,height=3cm]{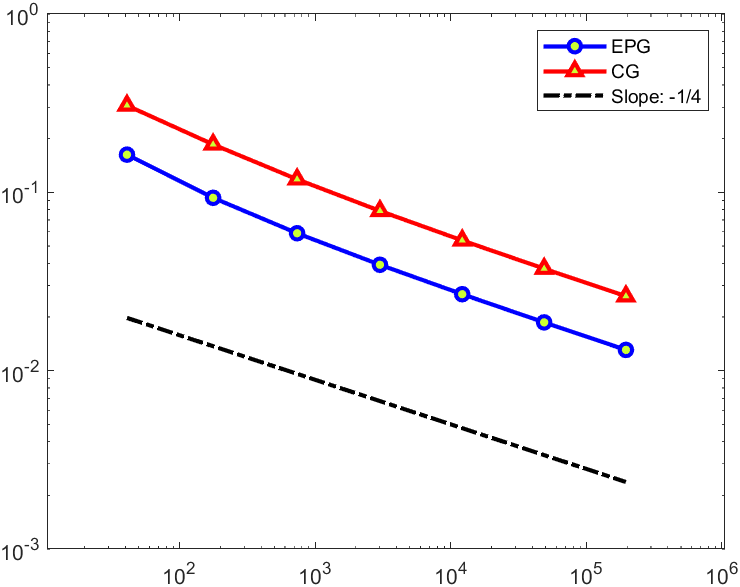}\\
    \includegraphics[width=3.5cm,height=3cm]{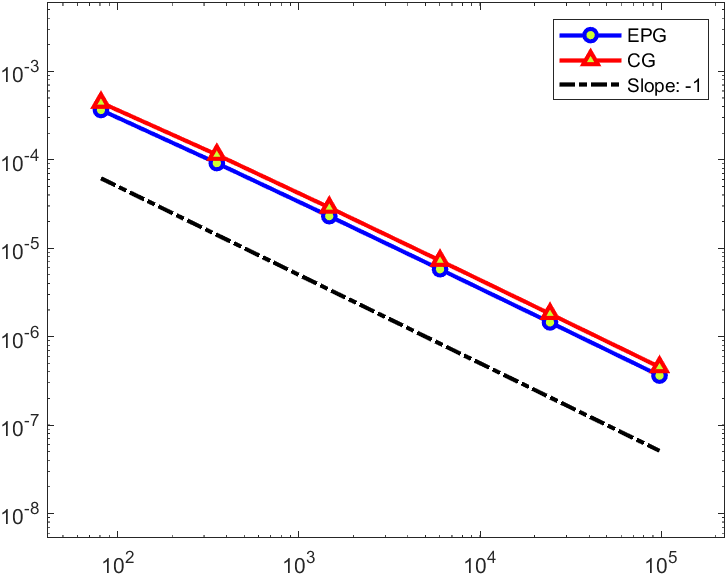}\qquad
    \includegraphics[width=3.5cm,height=3cm]{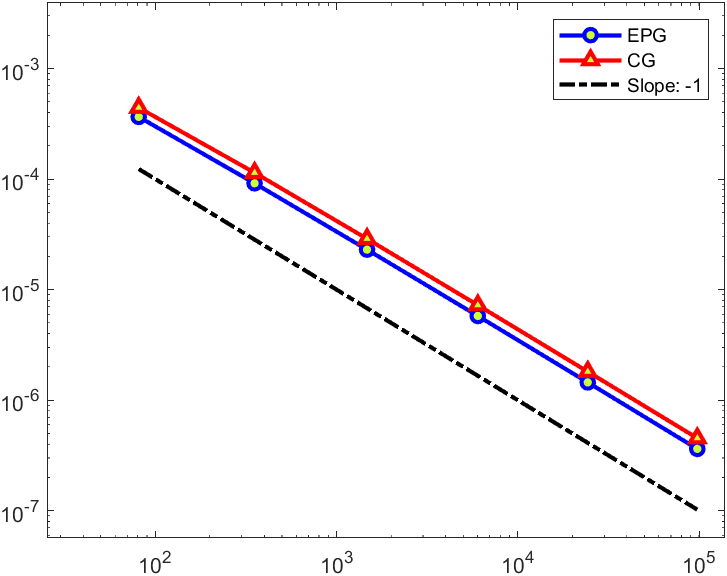}\qquad
    \includegraphics[width=3.5cm,height=3cm]{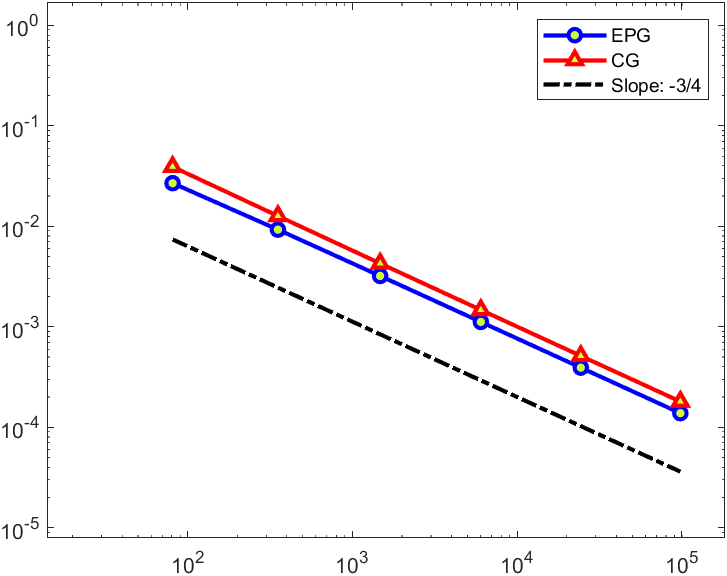}\\
    \includegraphics[width=3.5cm,height=3cm]{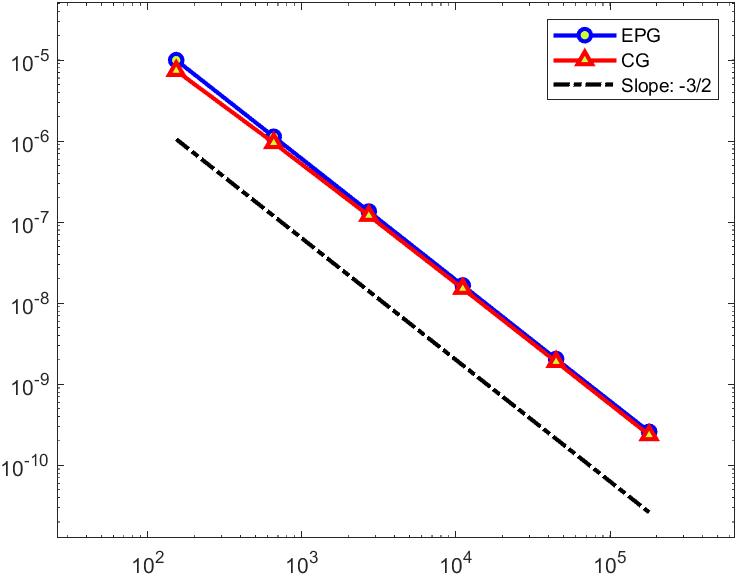}\qquad
    \includegraphics[width=3.5cm,height=3cm]{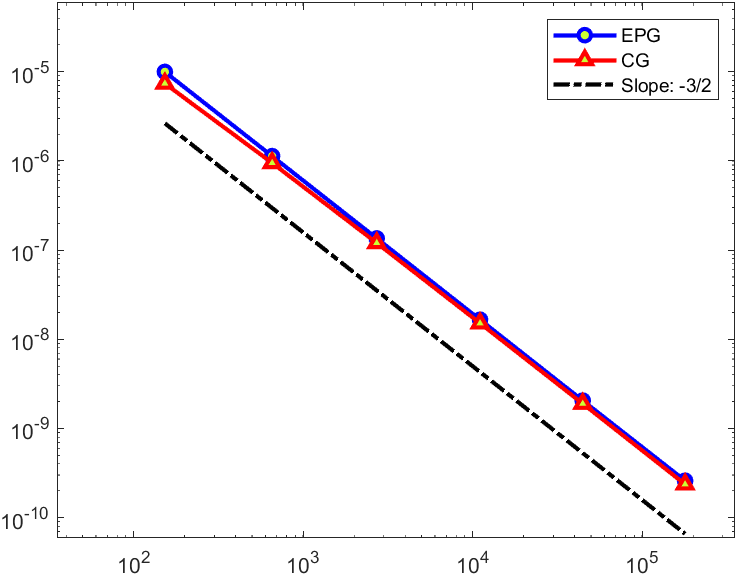}\qquad
    \includegraphics[width=3.5cm,height=3cm]{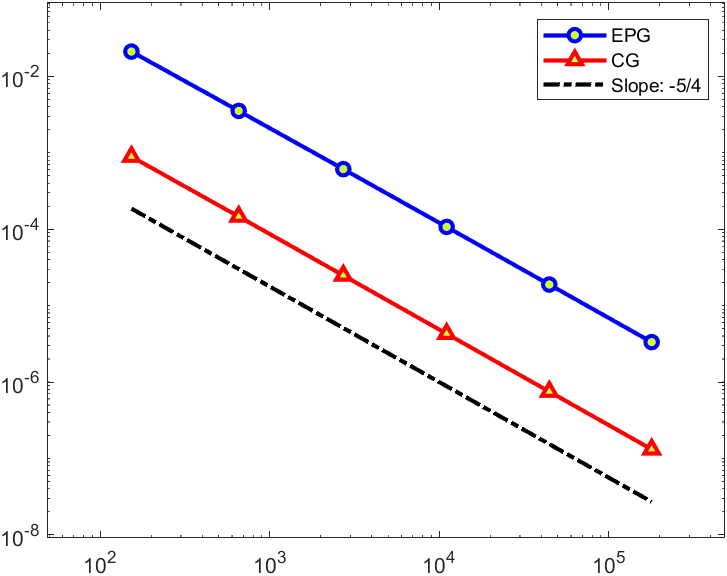}
    \caption{\footnotesize (Example \ref{example1})  Relative errors. Top-bottom: P1, P2 and P3. Left: Energy error of pressure. Middle: $L^2$ error of velocity. Right: $L^2$ error of the normal component of velocity on the trace.}
    \label{ex1_fig2}
\end{figure}
    We further compute the local mass conservation residual in Figure \ref{ex1_fig3}, which is defined as follows:
    \begin{align}\label{local_equal}
        R_{loc} :=\int_{\partial T} \bfu_h \cdot \bfn d\sigma - \int_{T} fdx.
    \end{align}
    Here the number of elements on the triangluar mesh is 32768. From Figure \ref{ex1_fig3}, we can observe the local mass conservation of the EPG method in this example is in the order of $10^{-17}$, which is close to machine precision. However, the local mass conservation residual for the CG method is relatively large in the order of $10^{-5}$. Then we use the respective velocities from the CG and EPG methods to compute the concentration in the transport equation. The time step size is chosen to be 0.05. Figure \ref{ex1_fig4} shows the simulations of the concentration at the time 4.5 based on the velocity from the CG method while Figure \ref{ex1_fig7} is presented to show the simulations of the concentration at the time 0.2, 0.4 and 0.8 based on the velocity from the EPG method. Since the simulating results for the concentration based on the velocity from P1-EPG, P2-EPG and P3-EPG are almost the same, we also only show the simulating results for the concentration based on the velocity from P3-EPG in the following examples. The maximum concentrations of the transport equation based on the velocity from the CG and EPG methods are shown in Figure \ref{ex1_fig8}. From the above Figures, we can conclude that the concentration will not overshoot if the velocity based on the EPG method is used for the solution of transport equation. However, the concentration will overshoot if the velocity based on the CG method is used.

\begin{figure}[htbp]
    \includegraphics[width=3.5cm,height=3cm]{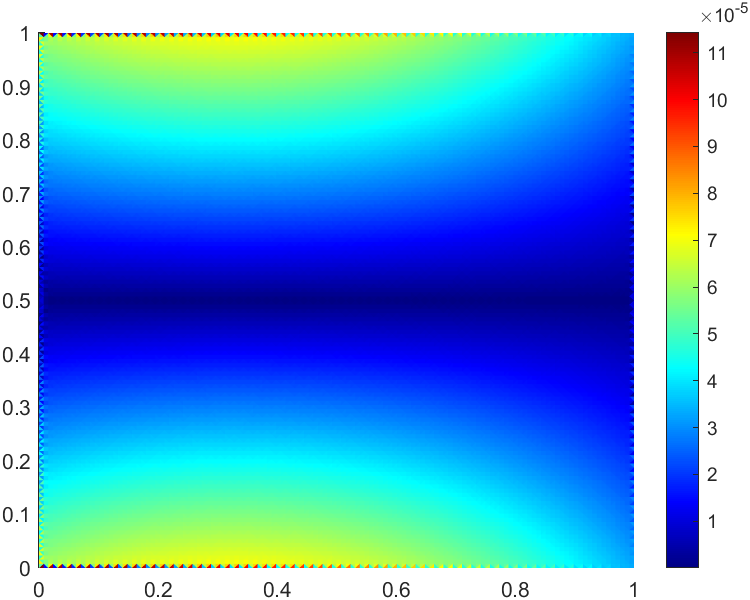}\qquad
    \includegraphics[width=3.5cm,height=3cm]{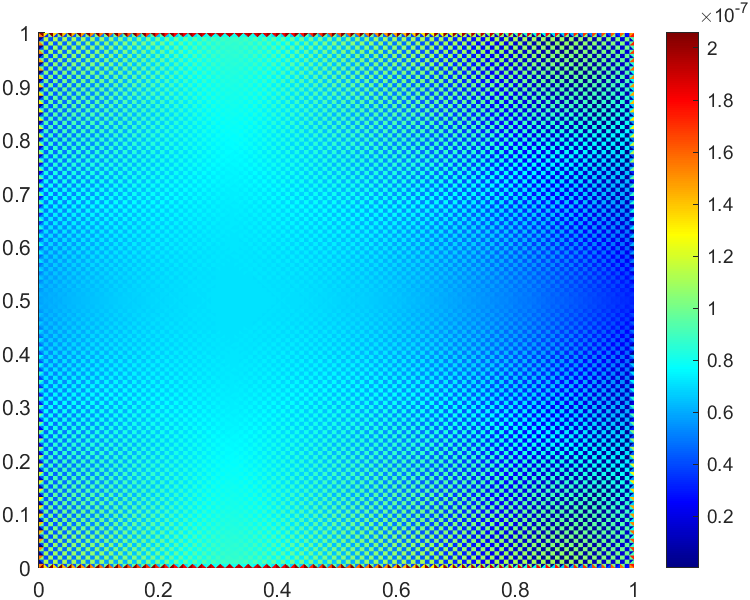}\qquad
    \includegraphics[width=3.5cm,height=3cm]{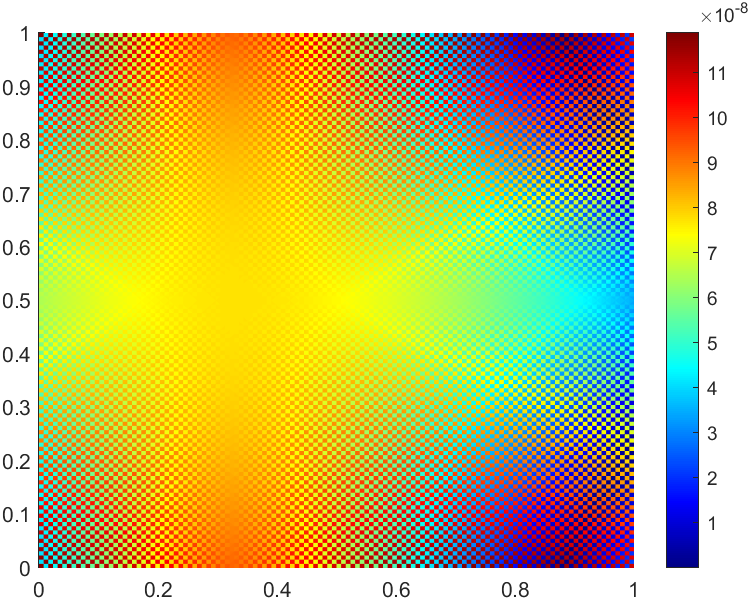}\\
    \includegraphics[width=3.5cm,height=3cm]{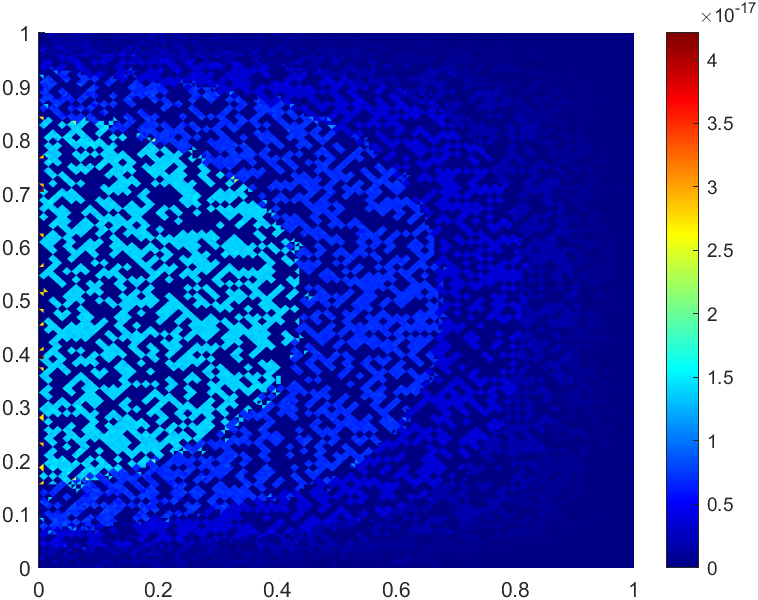}\qquad
    \includegraphics[width=3.5cm,height=3cm]{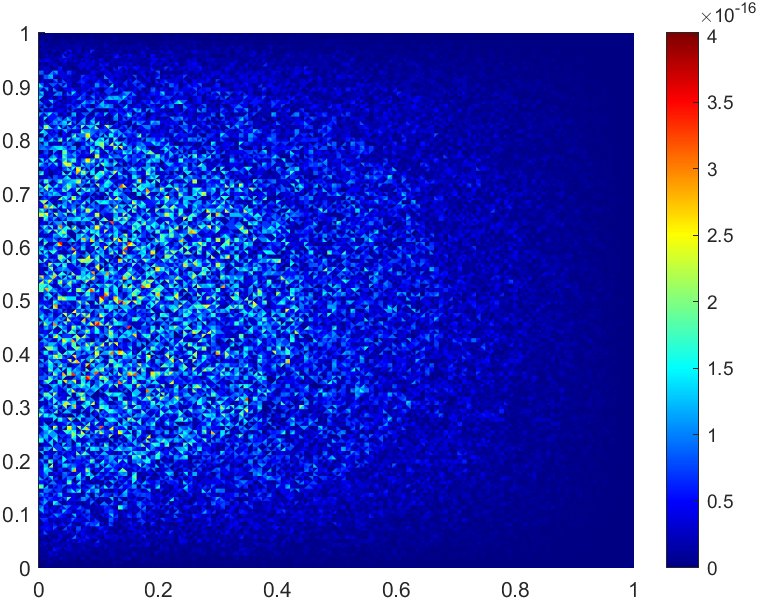}\qquad
    \includegraphics[width=3.5cm,height=3cm]{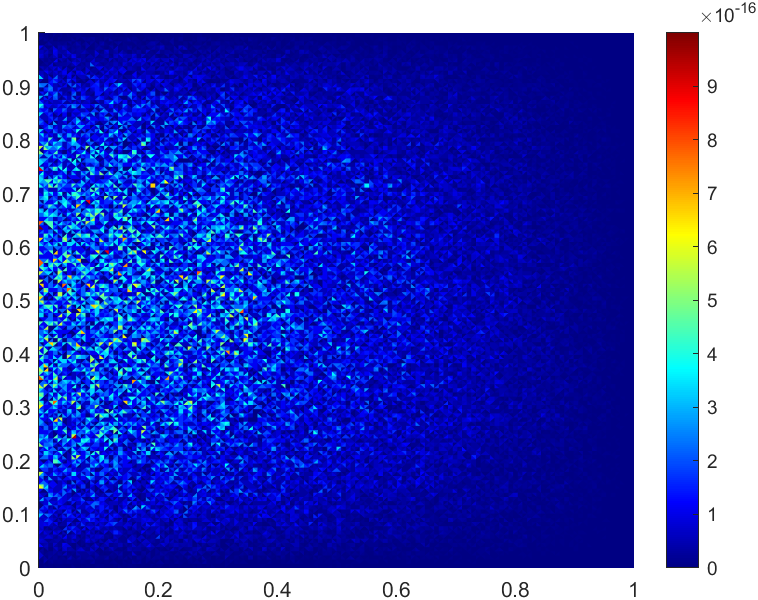}
    \caption{\footnotesize (Example \ref{example1})  Local mass conservation residual. Top-left: P1-CG. Top-middle: P2-CG. Top-right: P3-CG. Bottom-left: P1-EPG. Bottom-middle: P2-EPG. Bottom-right: P3-EPG.}
    \label{ex1_fig3}
\end{figure}

\begin{figure}[htbp]
    \includegraphics[width=3.5cm,height=3cm]{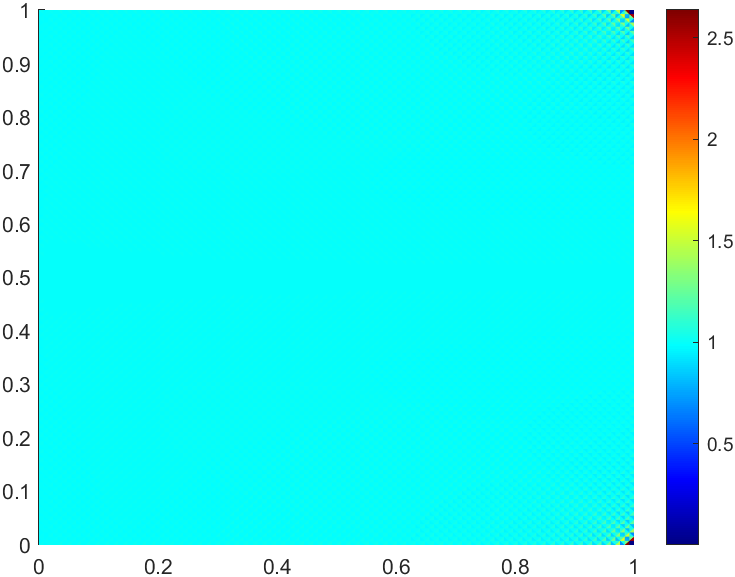}\qquad
    \includegraphics[width=3.5cm,height=3cm]{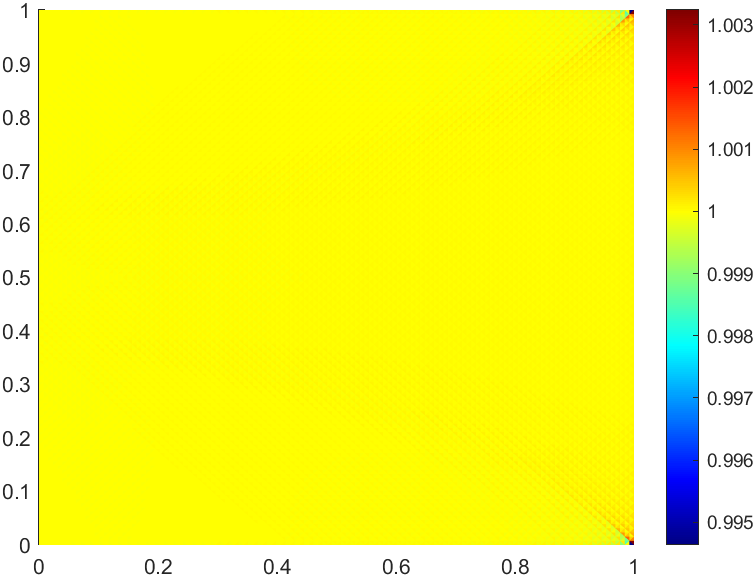}\qquad
    \includegraphics[width=3.5cm,height=3cm]{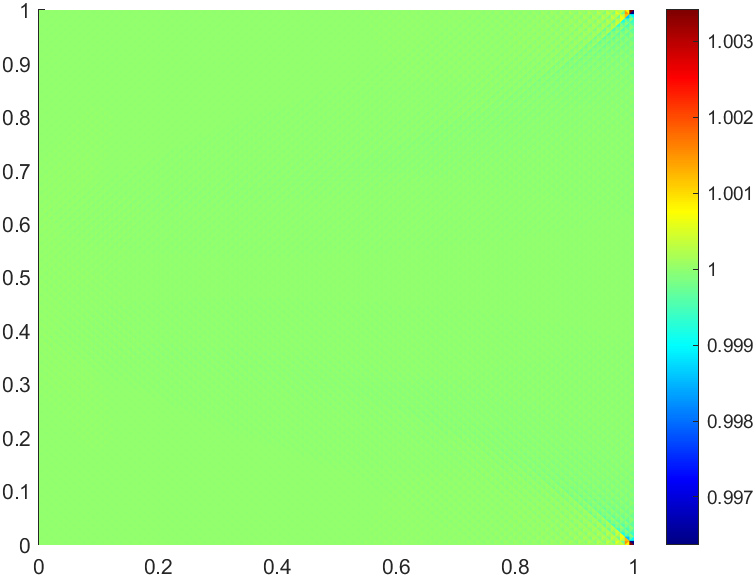}
    \caption{\footnotesize (Example \ref{example1})  Simulations of concentration at time 4.5 based on the velocity from the CG method. Left: P1-CG. Middle: P2-CG. Right: P3-CG.}
    \label{ex1_fig4}
\end{figure}

\begin{figure}[htbp]
    \includegraphics[width=3.5cm,height=3cm]{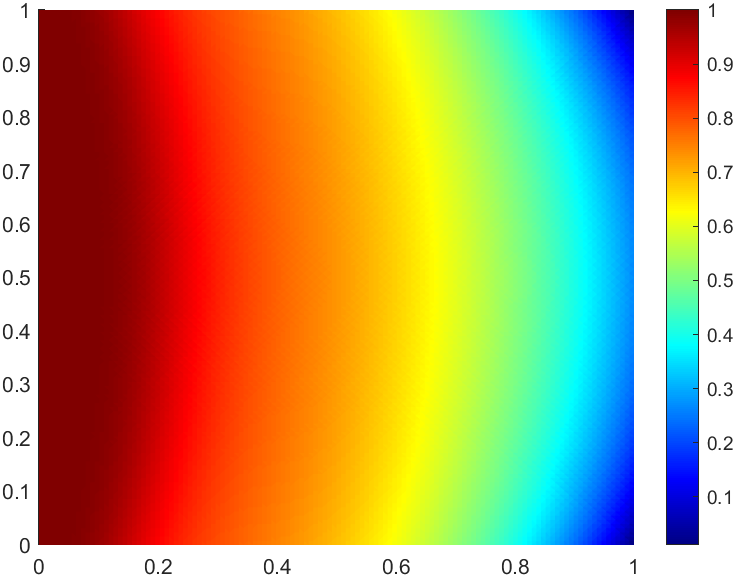}\qquad
    \includegraphics[width=3.5cm,height=3cm]{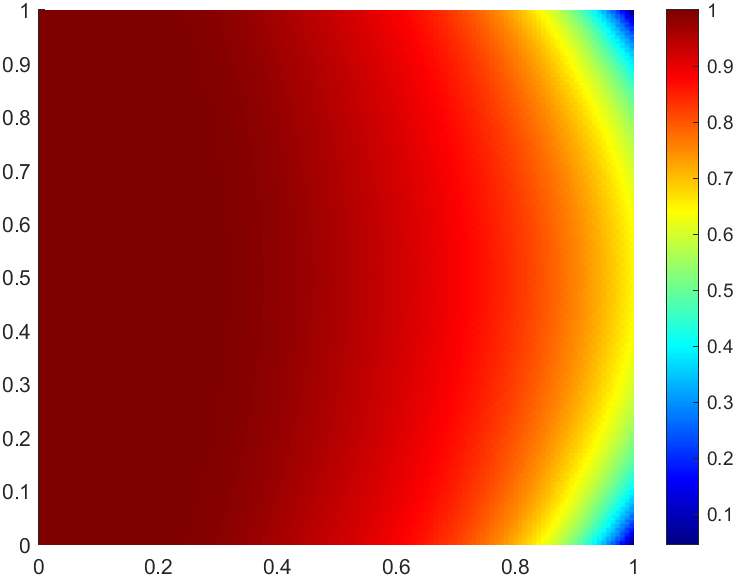}\qquad
    \includegraphics[width=3.5cm,height=3cm]{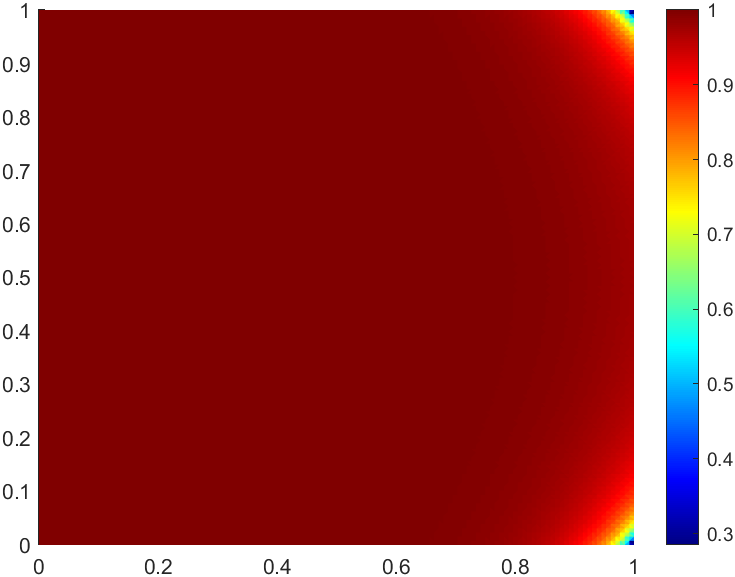}
    \caption{\footnotesize (Example \ref{example1})  Simulations of concentration based on the velocity from P3-EPG. Left-right: Simulations at time 0.2, 0.4 and 0.8.}
    \label{ex1_fig7}
\end{figure}

\begin{figure}[htbp]
    \includegraphics[width=3.5cm,height=3cm]{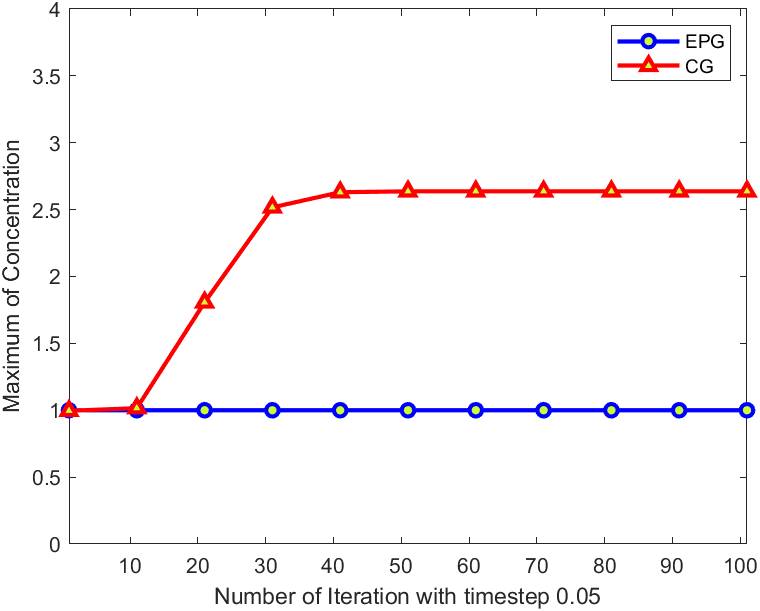}\qquad
    \includegraphics[width=3.5cm,height=3cm]{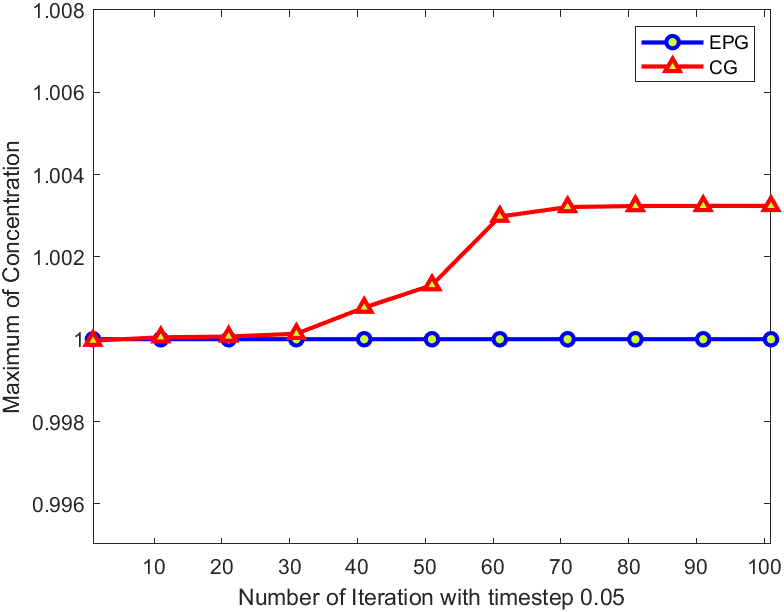}\qquad
    \includegraphics[width=3.5cm,height=3cm]{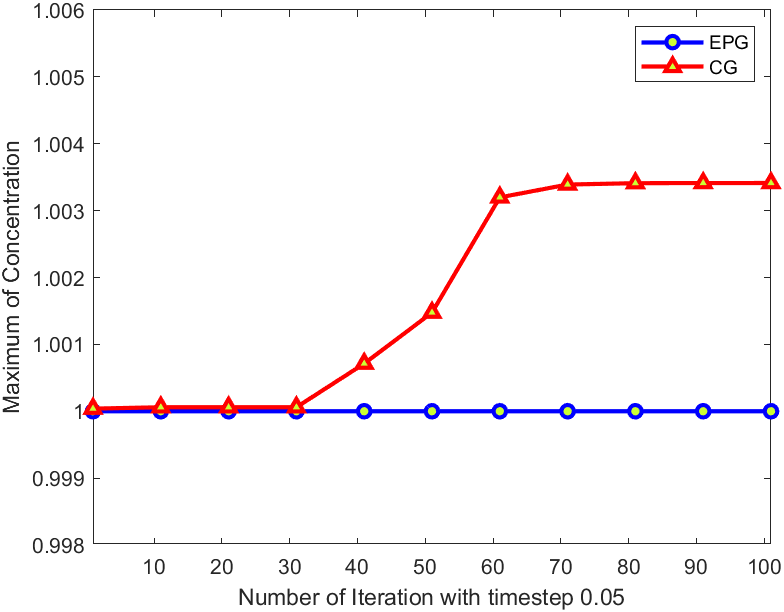}
    \caption{\footnotesize (Example \ref{example1})  Maximum concentrations based on the velocity from the CG and EPG methods. Left-right: P1, P2 and P3.}
    \label{ex1_fig8}
\end{figure}
\end{example}

\begin{example}\label{example3}
    Next we consider a ten-shaped domain $\Omega$, which consists of five unit square subdomains and the central point is ($\frac{3}{2},\frac{3}{2}$). The central subdomain $\Omega_c = [\frac{5}{4},\frac{7}{4}]^2$. The conductivity $\bfK$ is still a diagonal tensor with its entry being $10^{-2}$ in $\Omega_c$ and 1 elsewhere. The boundary conditions for the Darcy flow are imposed as follows:
    \begin{align*}
        &p = 1, \quad {\rm on}\ \{0\} \times (1,2), \\
        &p = 0, \quad {\rm on}\ \{3\} \times (1,2),\quad (1,2) \times \{0\},\quad (1,2) \times \{3\}, \\
        &\bfu \cdot \bfn = 0, \qquad {\rm elsewhere}.
    \end{align*}

The pressure and the velocity are shown in Figure \ref{ex3_fig1} with 640 elements. Figure \ref{ex3_fig2} describing the local mass conservation indicates that the residual (\ref{local_equal}) for the CG method is larger than that for the EPG method beside $\Omega_c$. Here we test the problem on a triangular mesh with 40960 elements. Moreover, we numerically compute the concentration. The time step size is chosen to be 0.03. It is clear that the phenomenon of overshooting based on velocity from the CG method is still obvious while the concentration will not overshoot based on velocity from the EPG method in this example, which are displayed in Figures \ref{ex3_fig3} - \ref{ex3_fig7}. By comparison, it can be concluded that the local mass-conservation preserving property of the EPG method is really important.
    
\begin{figure}[htbp]
    \includegraphics[width=3.5cm,height=3cm]{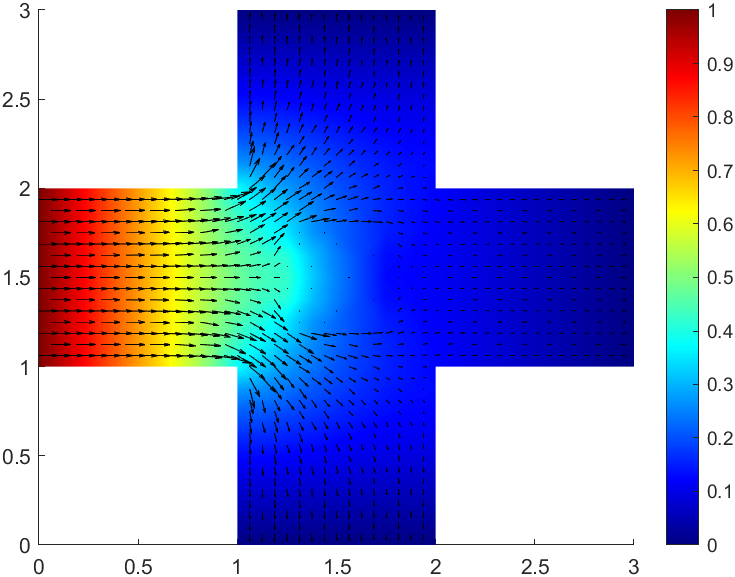}\qquad
    \includegraphics[width=3.5cm,height=3cm]{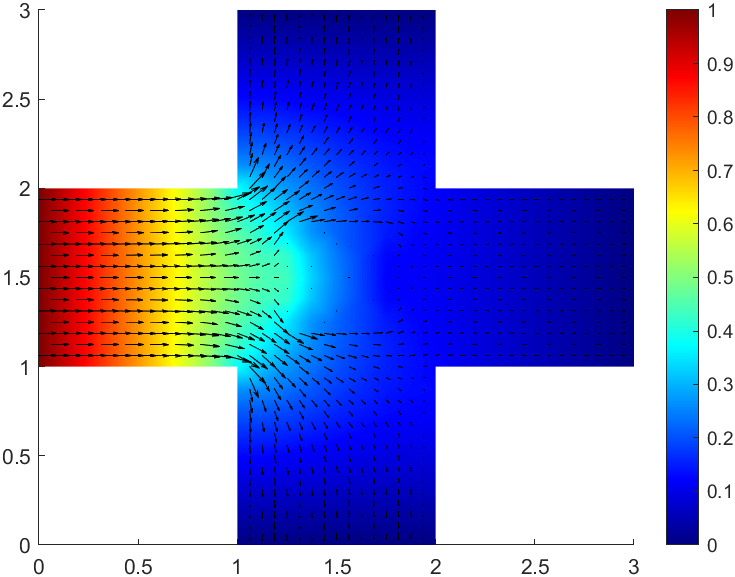}\qquad
    \includegraphics[width=3.5cm,height=3cm]{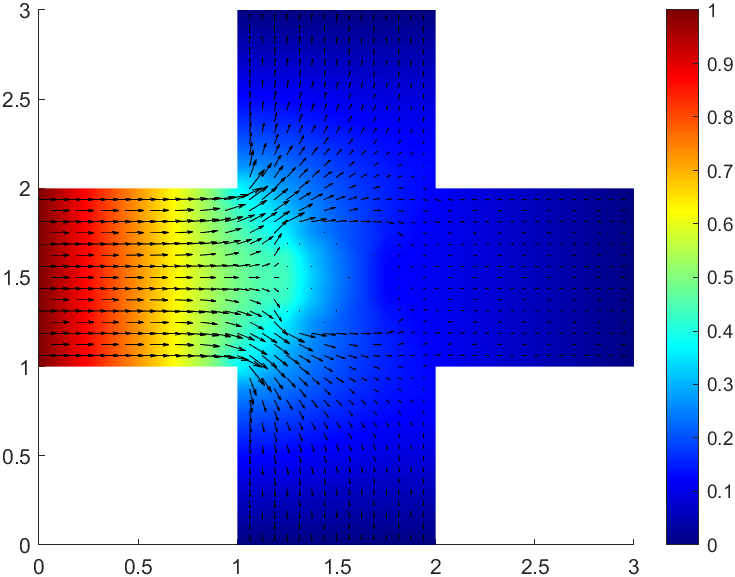}\\
    \includegraphics[width=3.5cm,height=3cm]{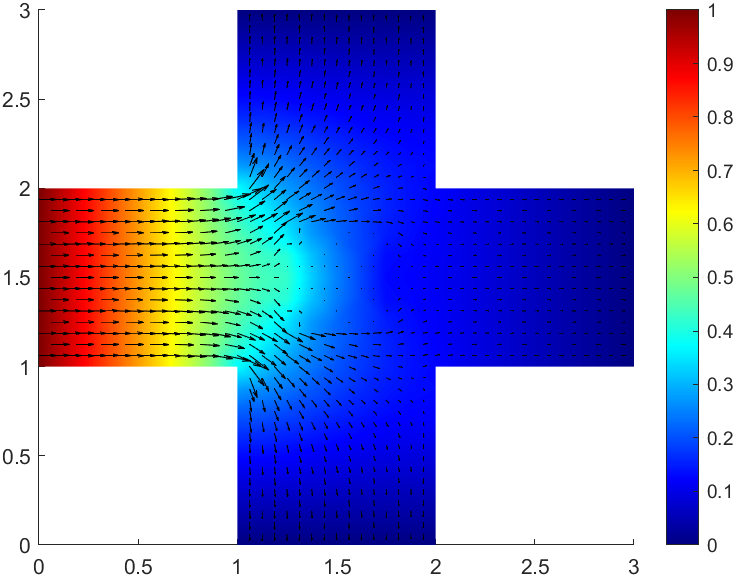}\qquad
    \includegraphics[width=3.5cm,height=3cm]{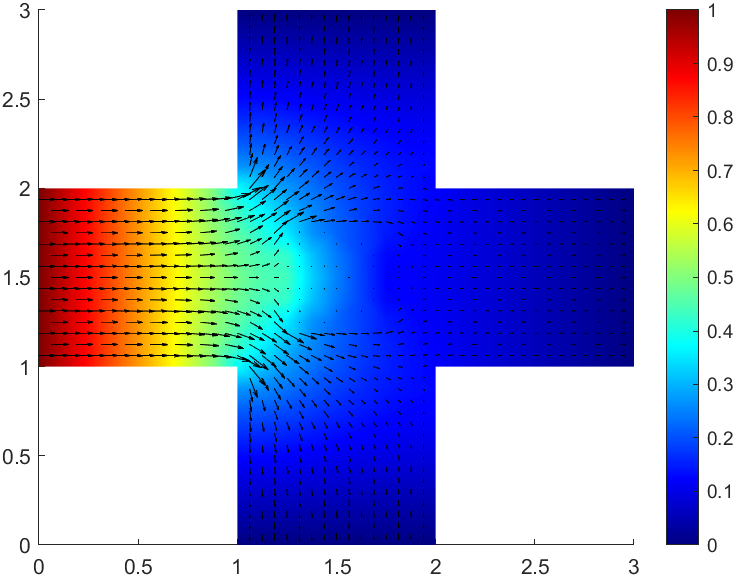}\qquad
    \includegraphics[width=3.5cm,height=3cm]{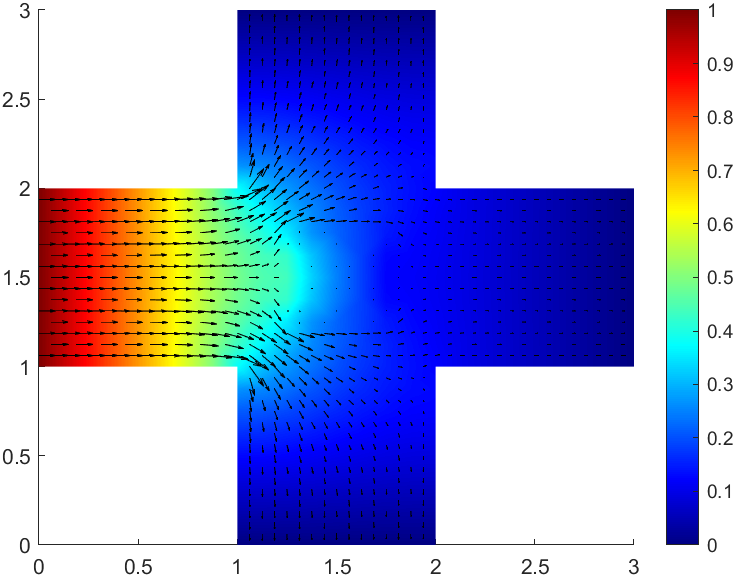}
    \caption{\footnotesize (Example \ref{example3})  The solutions of pressure and Darcy velocity. Top-left: P1-CG. Top-middle: P2-CG. Top-right: P3-CG. Bottom-left: P1-EPG. Bottom-middle: P2-EPG. Bottom-right: P3-EPG.}
    \label{ex3_fig1}
\end{figure}

\begin{figure}[htbp]
    \includegraphics[width=3.5cm,height=3cm]{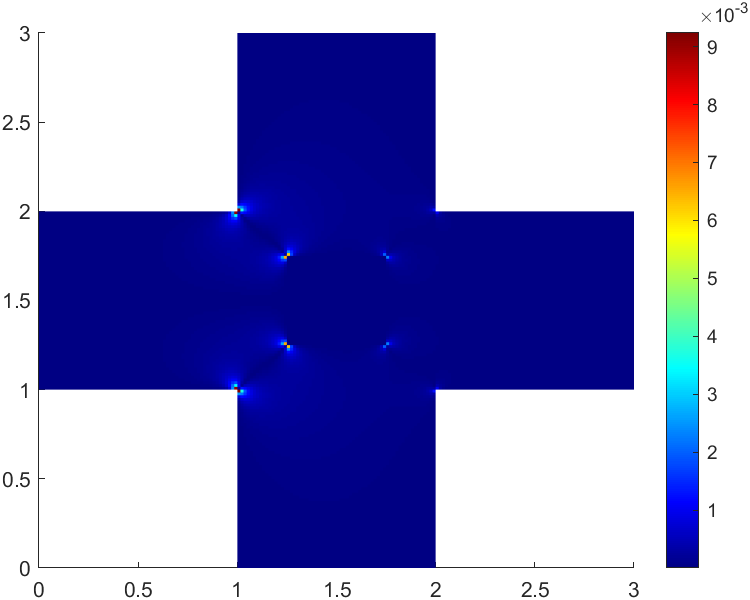}\qquad
    \includegraphics[width=3.5cm,height=3cm]{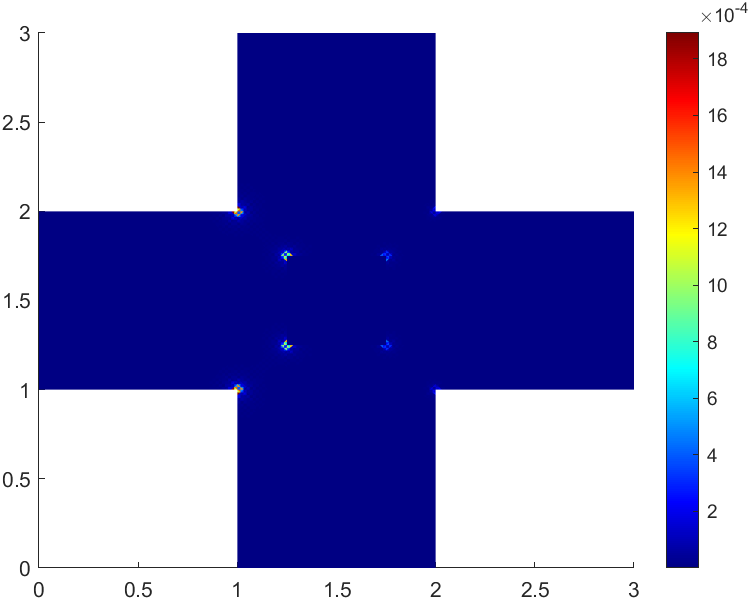}\qquad
    \includegraphics[width=3.5cm,height=3cm]{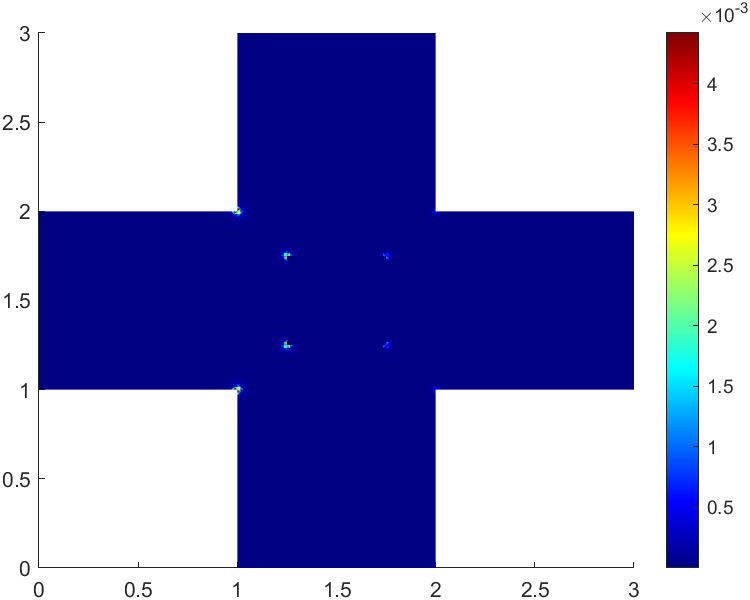}\\
    \includegraphics[width=3.5cm,height=3cm]{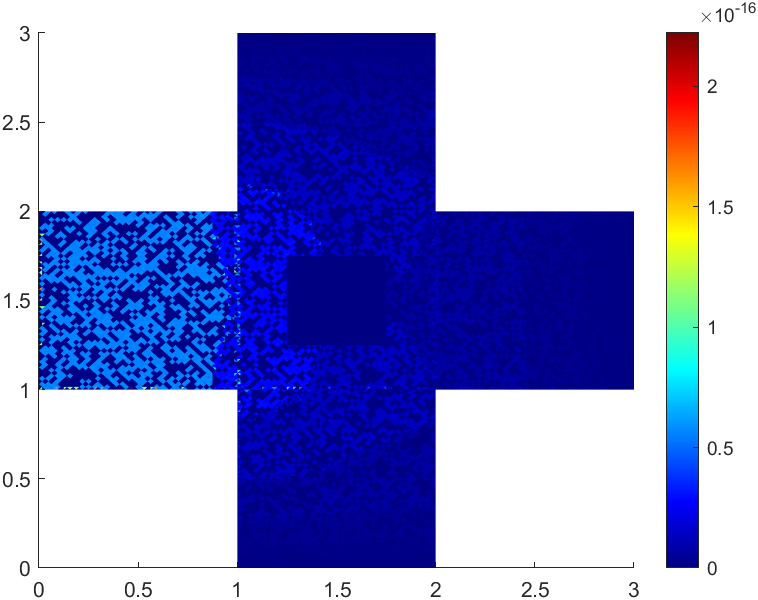}\qquad
    \includegraphics[width=3.5cm,height=3cm]{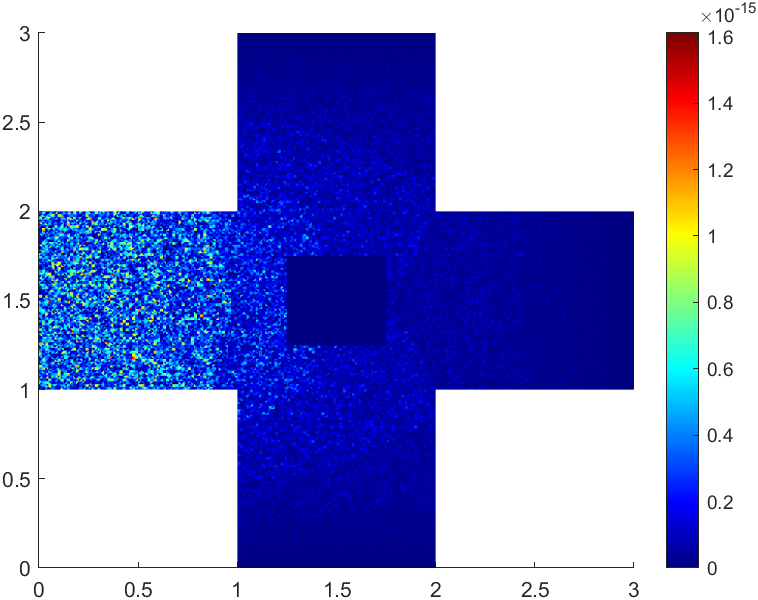}\qquad
    \includegraphics[width=3.5cm,height=3cm]{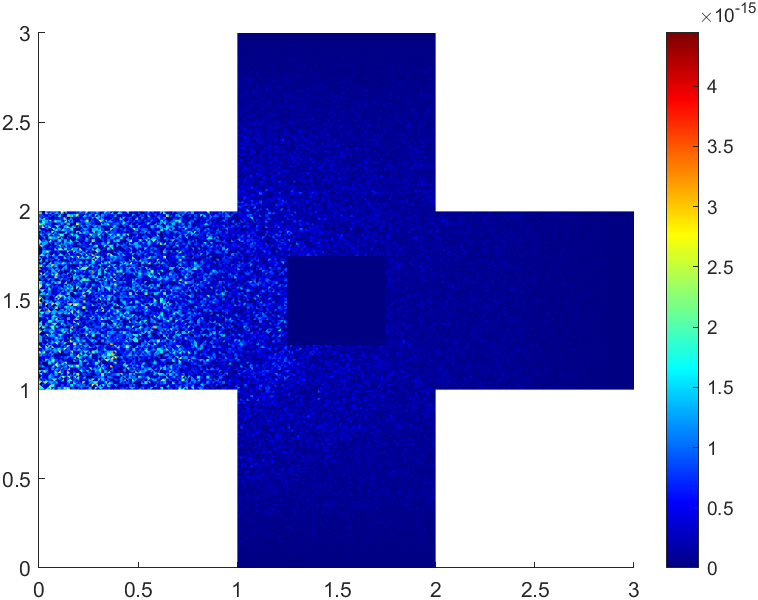}
    \caption{\footnotesize (Example \ref{example3})  Local mass conservation residual. Top-left: P1-CG. Top-middle: P2-CG. Top-right: P3-CG. Bottom-left: P1-EPG. Bottom-middle: P2-EPG. Bottom-right: P3-EPG.}
    \label{ex3_fig2}
\end{figure}

\begin{figure}[htbp]
    \includegraphics[width=3.5cm,height=3cm]{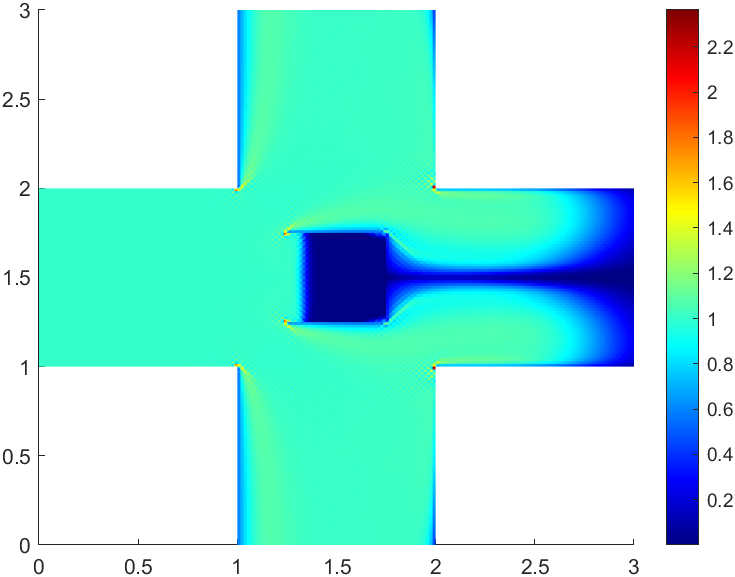}\qquad
    \includegraphics[width=3.5cm,height=3cm]{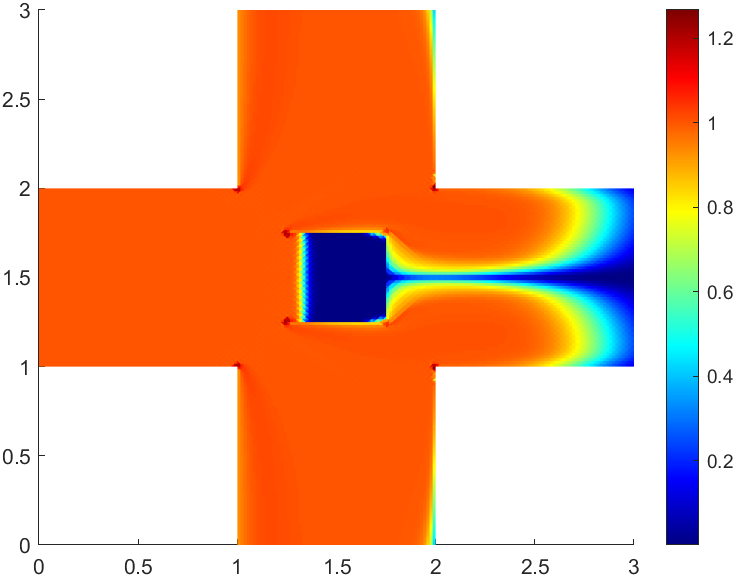}\qquad
    \includegraphics[width=3.5cm,height=3cm]{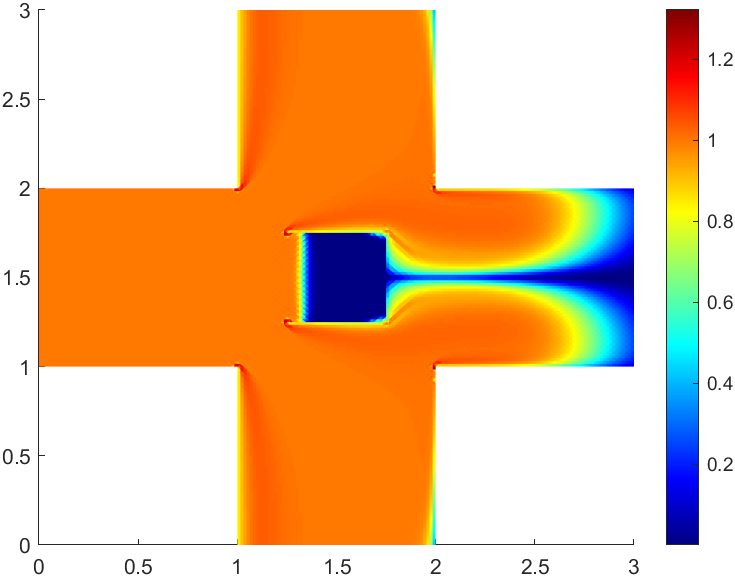}
    \caption{\footnotesize (Example \ref{example3})  Simulations of concentration at time 2.7 based on the velocity from the CG method. Left: P1-CG. Middle: P2-CG. Right: P3-CG.}
    \label{ex3_fig3}
\end{figure}

\begin{figure}[htbp]
    \includegraphics[width=3.5cm,height=3cm]{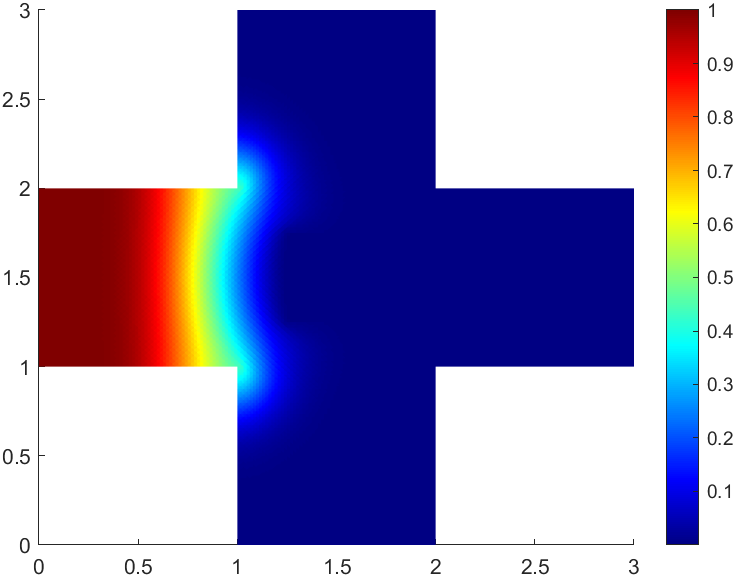}\qquad
    \includegraphics[width=3.5cm,height=3cm]{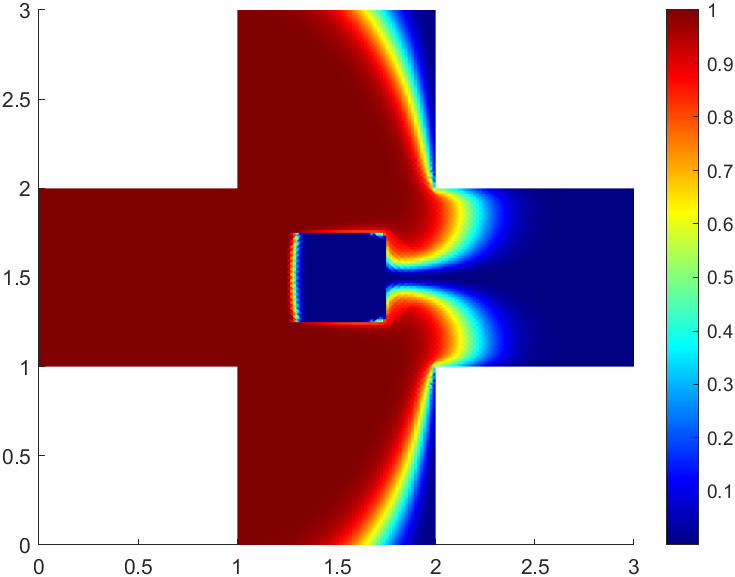}\qquad
    \includegraphics[width=3.5cm,height=3cm]{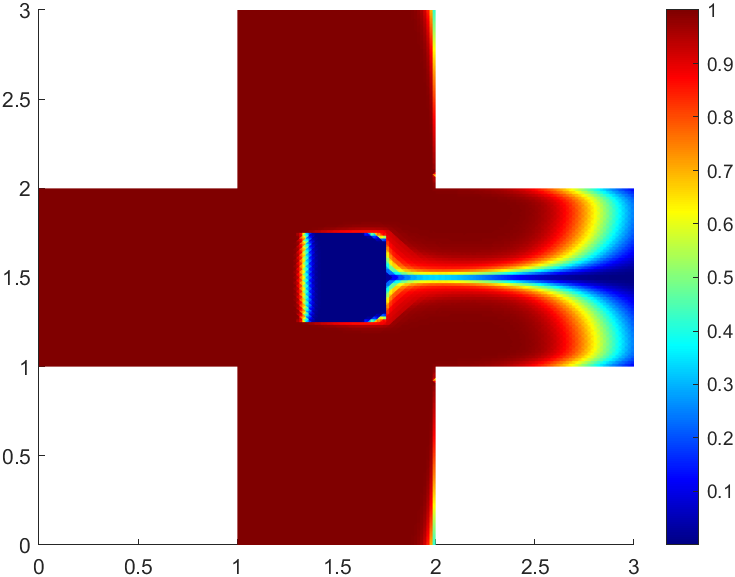}
    \caption{\footnotesize (Example \ref{example3})  Simulations of concentration based on the velocity from P3-EPG. Left-right: Simulations at time 0.3, 1.5 and 2.7.}
    \label{ex3_fig6}
\end{figure}

\begin{figure}[htbp]
    \includegraphics[width=3.5cm,height=3cm]{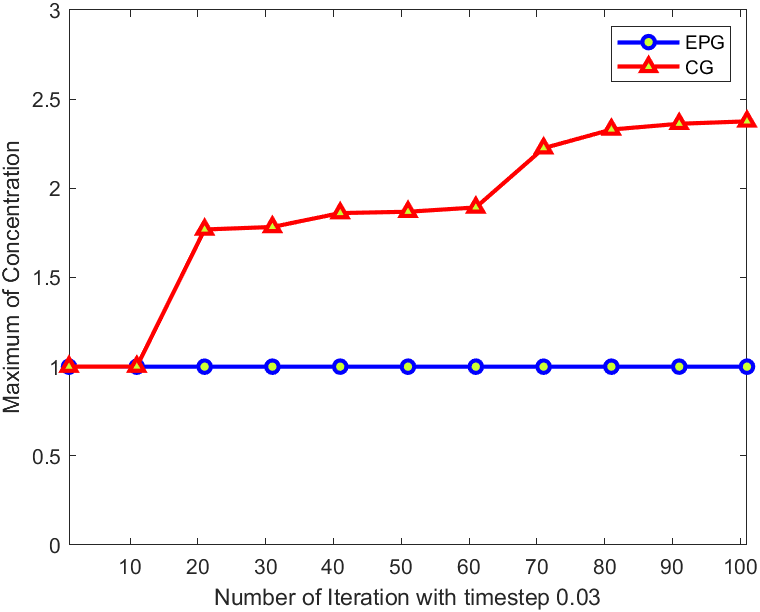}\qquad
    \includegraphics[width=3.5cm,height=3cm]{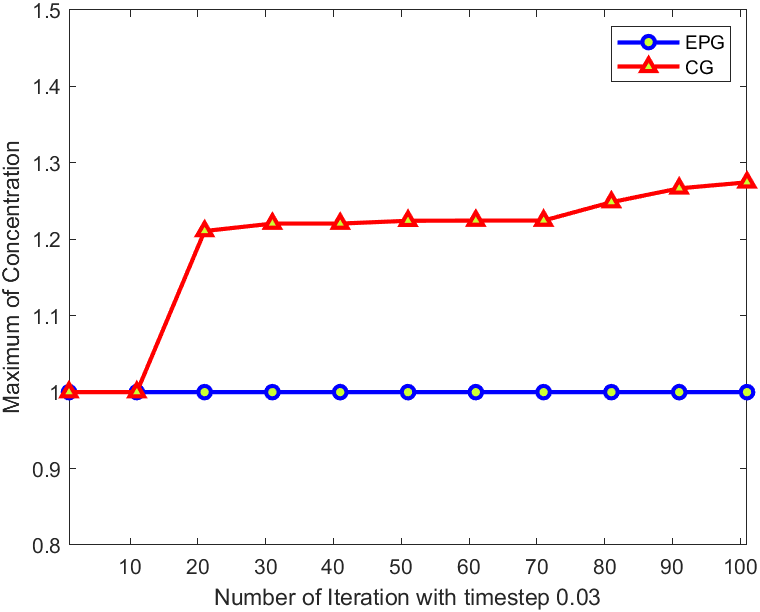}\qquad
    \includegraphics[width=3.5cm,height=3cm]{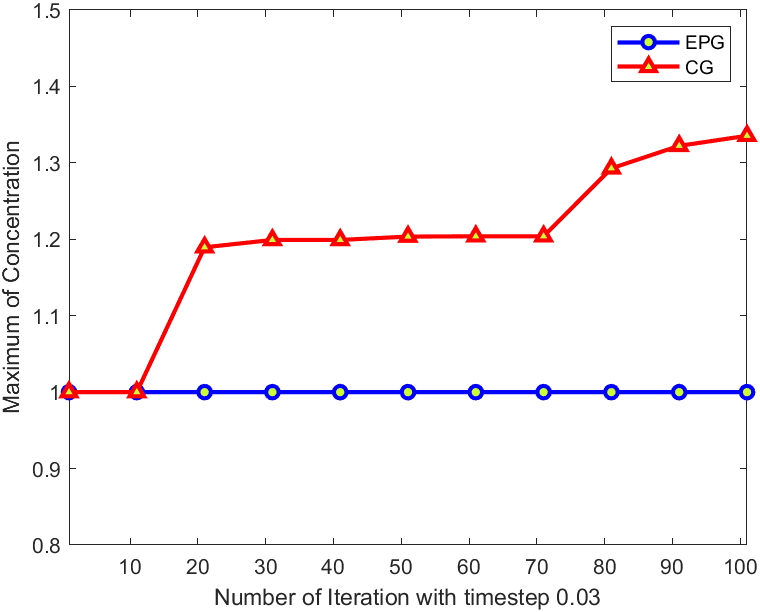}
    \caption{\footnotesize (Example \ref{example3})  Maximum concentrations based on the velocity from the CG and EPG methods. Left-right: P1, P2 and P3.}
    \label{ex3_fig7}
\end{figure}
\end{example}

\begin{example}\label{example4}
    In the final example, we consider the L-shaped domain $\Omega = [0,2]^2\setminus [0,1]^2$. There exist two small square subdomains $\Omega_{c_1}=[\frac{1}{4},\frac{3}{4}]^2$ and $\Omega_{c_2}=[\frac{1}{4},\frac{3}{4}]\times[\frac{5}{4},\frac{7}{4}]$. We denote $\Omega_c := \Omega_{c_1} \cup \Omega_{c_2}$, which represents the poorly permeable part. Similarly, the diagonal entry of $\bfK$ is defined with its entry $10^{-2}$ in $\Omega_c$ and 1 elsewhere. The boundary conditions for the Darcy flow are imposed as follows:
    $$
    p = 1, \quad {\rm on}\ \{0\} \times (1,2),\qquad
    p = 0, \quad {\rm on}\ \{2\} \times (0,1),\qquad
    \bfu \cdot \bfn = 0, \quad {\rm elsewhere}.
    $$
    The pressure and the velocity are displayed in Figure \ref{ex4_fig1} with 384 elements. We can see from Figure \ref{ex4_fig2} that the residual (\ref{local_equal}) computed by the CG method is particularly larger than that computed by the EPG method at the vertices of $\Omega_{c_1}$ and $\Omega_{c_2}$. The number of elements on the mesh we test is 24576. Thus the performance of the local mass conservation property of the EPG method is robust. Furthermore, we test the transport equation numerically. The time step size is chosen to be 0.01. It can be demonstrated that the robustness of the EPG method in terms of the upper bound preserving property for the transport can also be guaranteed while the concentration for the CG method still overshoots, as shown in Figures \ref{ex4_fig3} - \ref{ex4_fig7}.

\begin{figure}[htbp]
    \includegraphics[width=3.5cm,height=3cm]{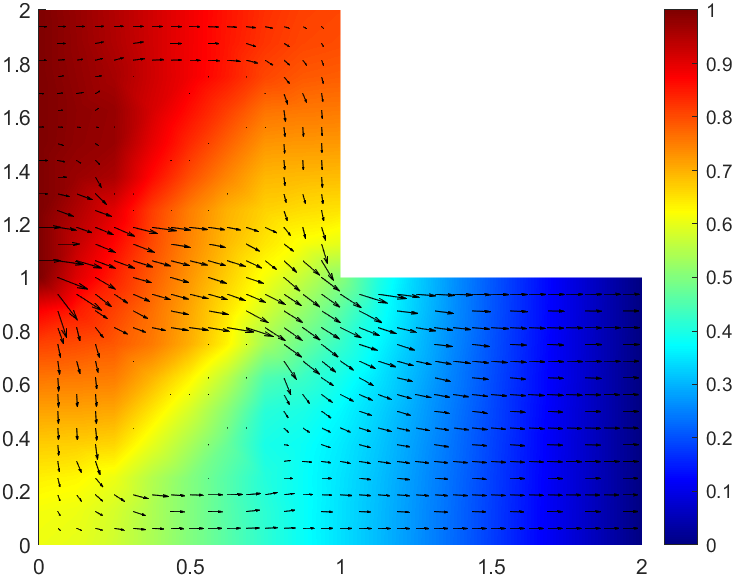}\qquad
    \includegraphics[width=3.5cm,height=3cm]{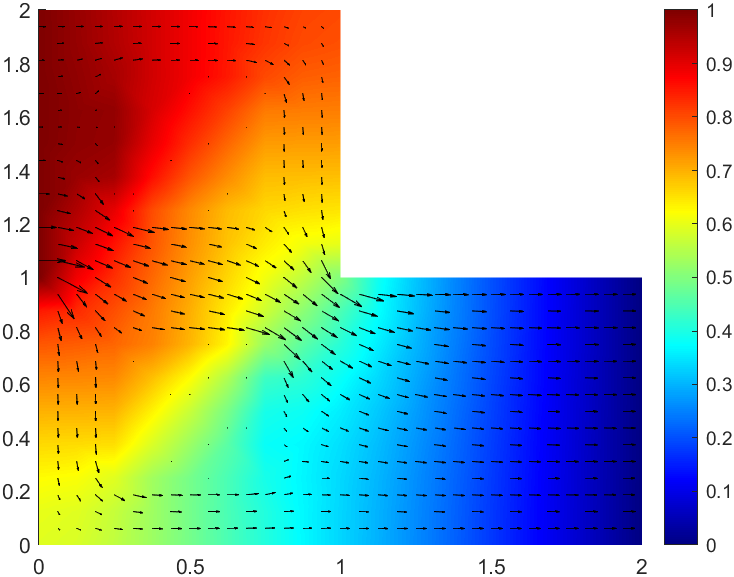}\qquad
    \includegraphics[width=3.5cm,height=3cm]{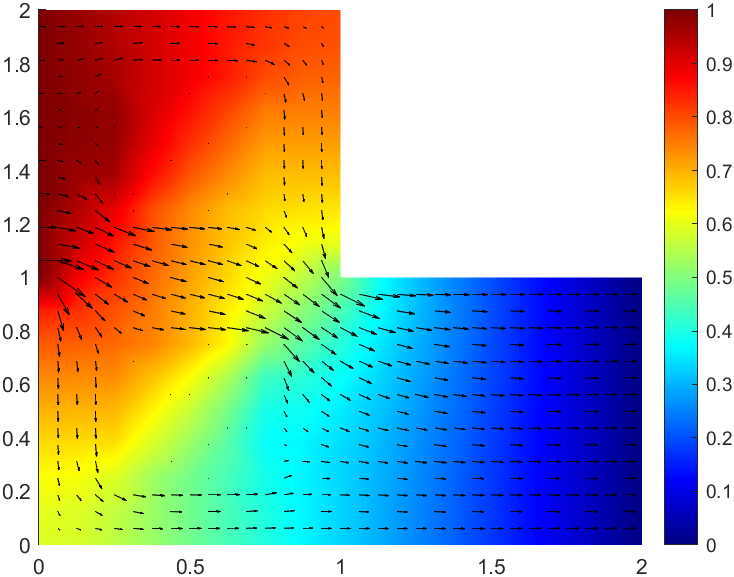}\\
    \includegraphics[width=3.5cm,height=3cm]{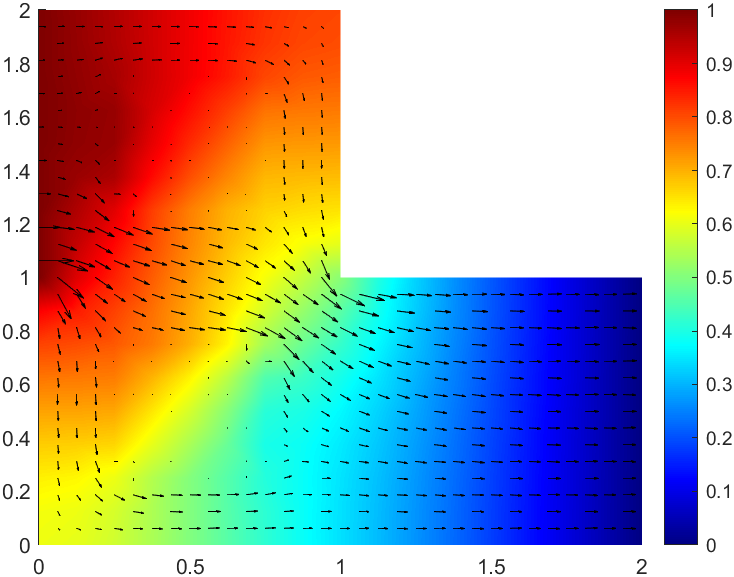}\qquad
    \includegraphics[width=3.5cm,height=3cm]{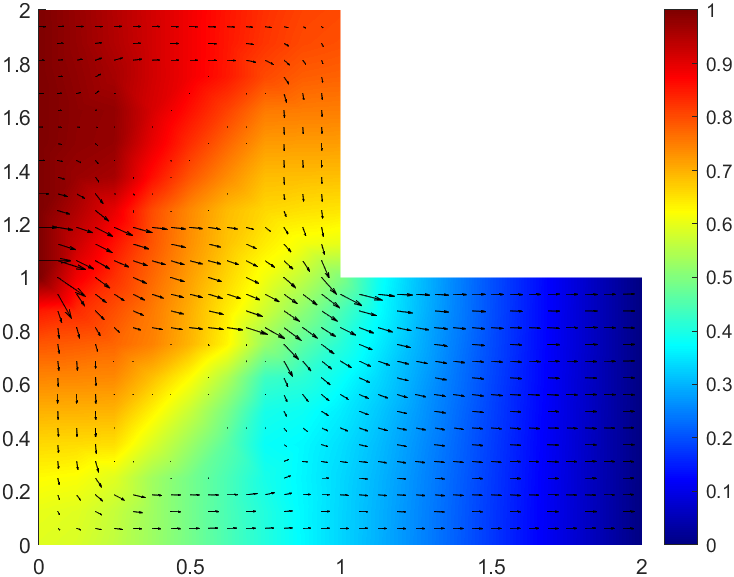}\qquad
    \includegraphics[width=3.5cm,height=3cm]{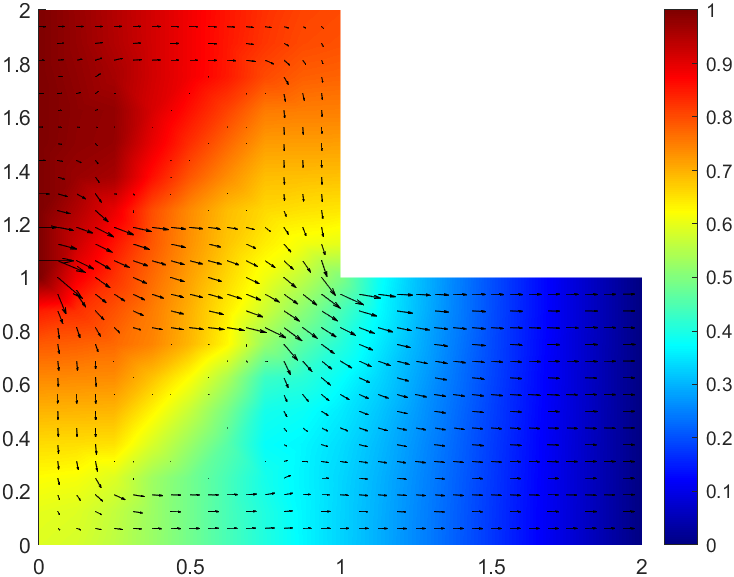}
    \caption{\footnotesize (Example \ref{example4})  The solutions of pressure and Darcy velocity. Top-left: P1-CG. Top-middle: P2-CG. Top-right: P3-CG. Bottom-left: P1-EPG. Bottom-middle: P2-EPG. Bottom-right: P3-EPG.}
    \label{ex4_fig1}
\end{figure}

\begin{figure}[htbp]
    \includegraphics[width=3.5cm,height=3cm]{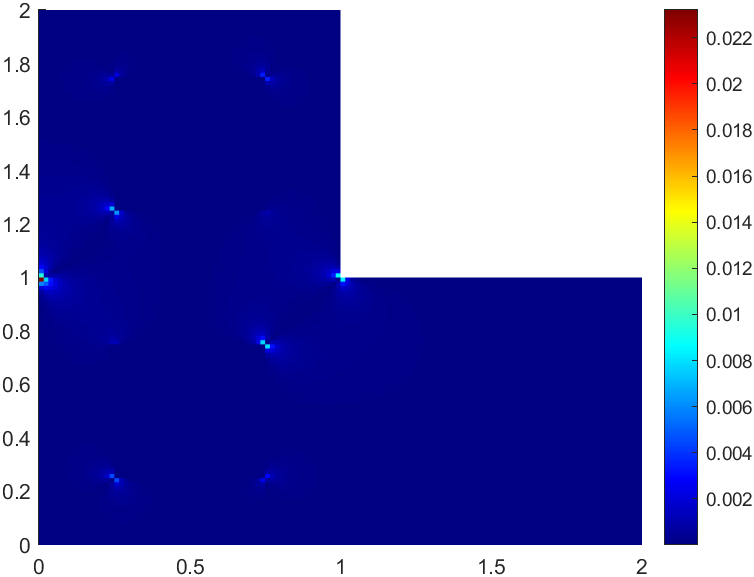}\qquad
    \includegraphics[width=3.5cm,height=3cm]{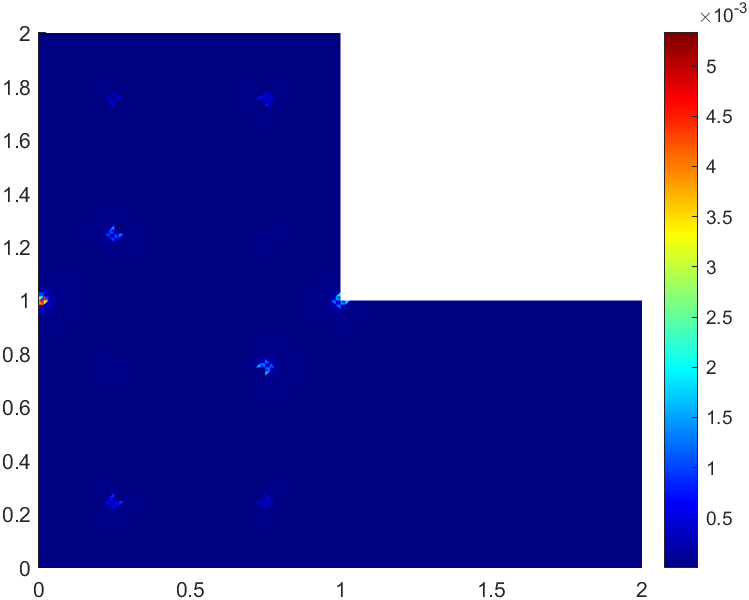}\qquad
    \includegraphics[width=3.5cm,height=3cm]{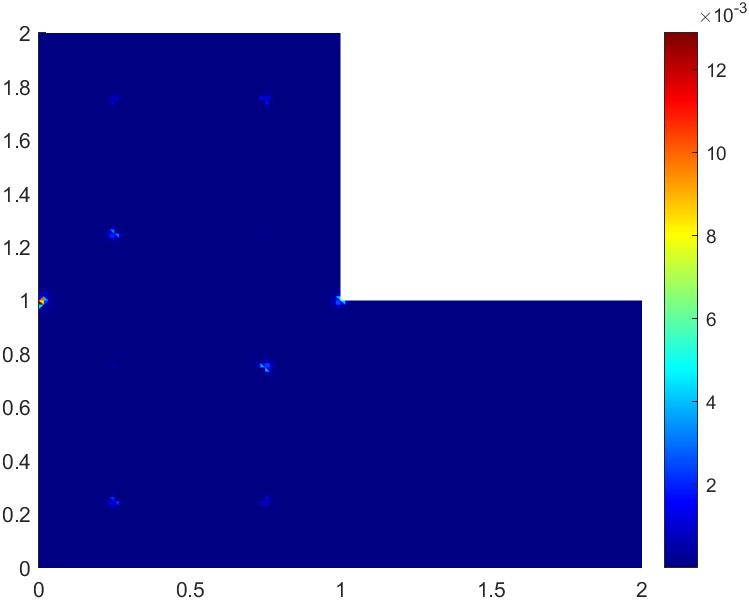}\\
    \includegraphics[width=3.5cm,height=3cm]{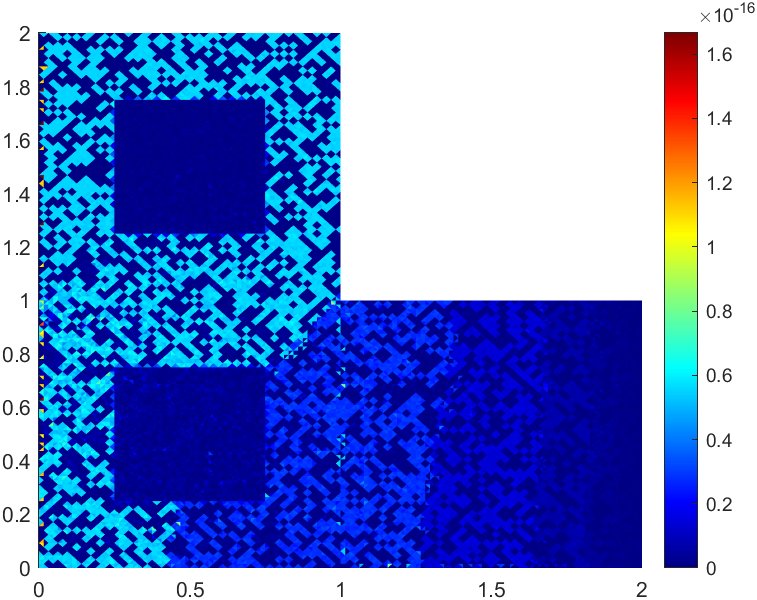}\qquad
    \includegraphics[width=3.5cm,height=3cm]{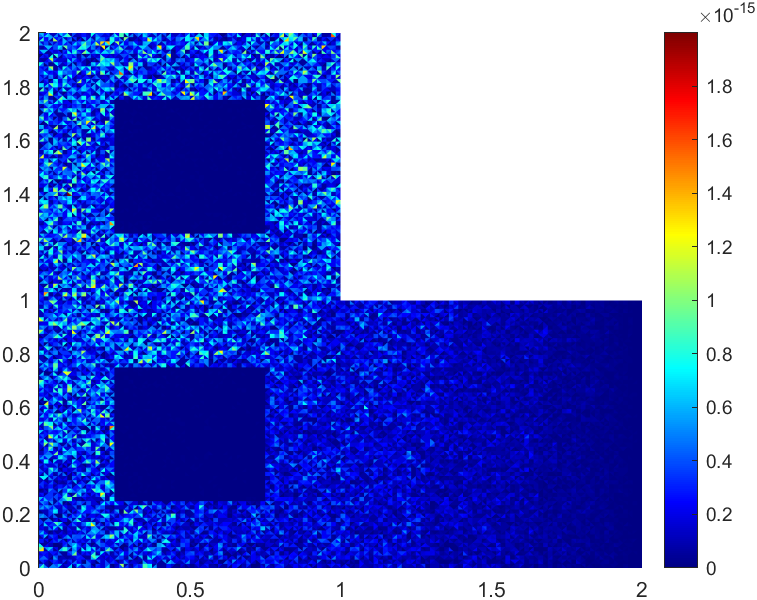}\qquad
    \includegraphics[width=3.5cm,height=3cm]{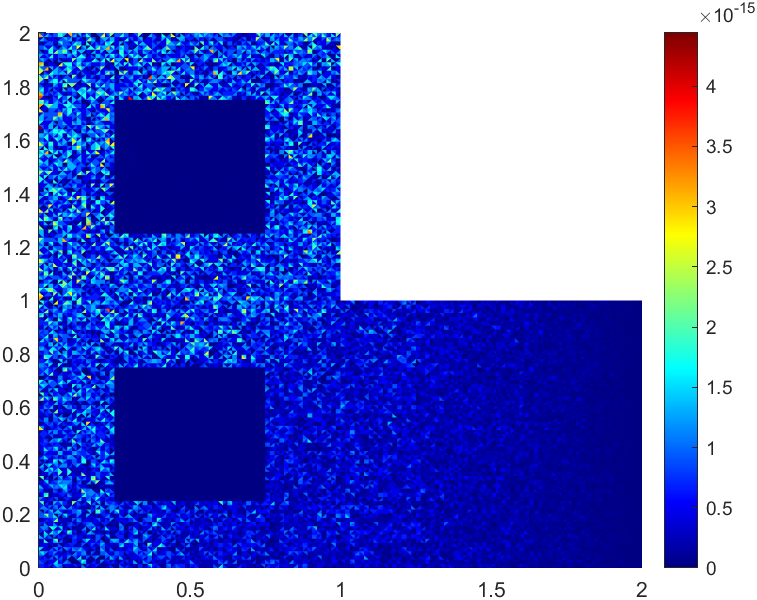}
    \caption{\footnotesize (Example \ref{example4})  Local mass conservation residual. Top-left: P1-CG. Top-middle: P2-CG. Top-right: P3-CG. Bottom-left: P1-EPG. Bottom-middle: P2-EPG. Bottom-right: P3-EPG.}
    \label{ex4_fig2}
\end{figure}

\begin{figure}[htbp]
    \includegraphics[width=3.5cm,height=3cm]{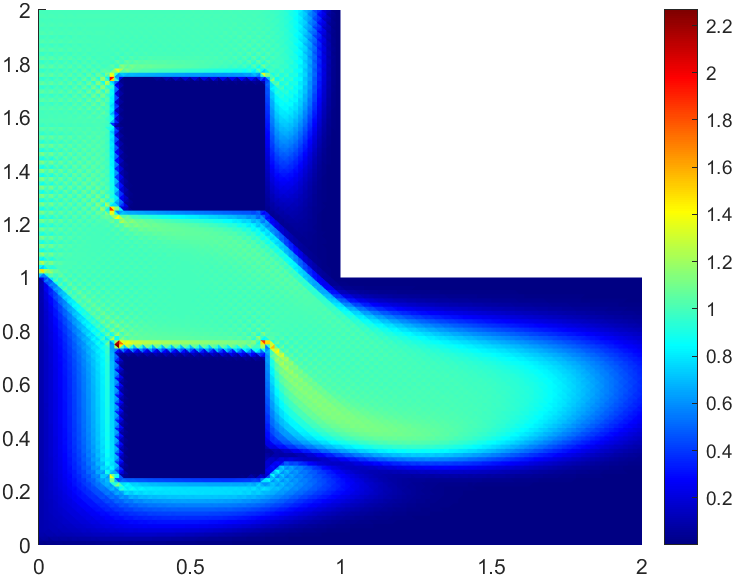}\qquad
    \includegraphics[width=3.5cm,height=3cm]{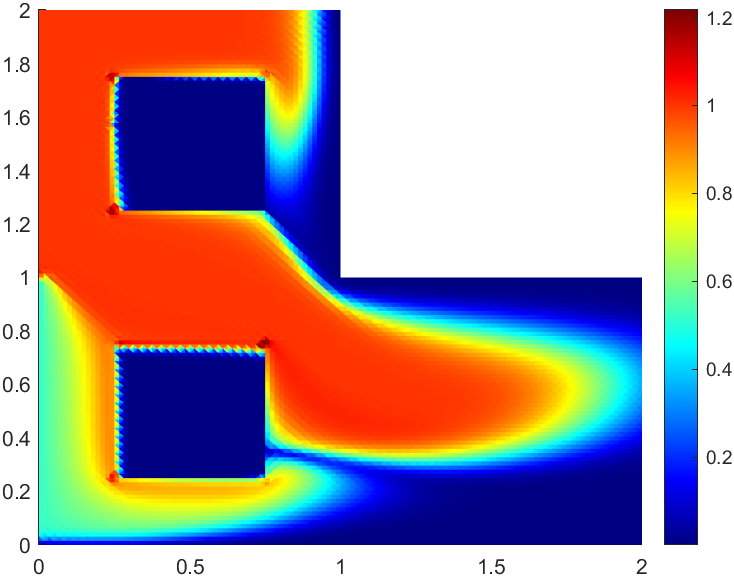}\qquad
    \includegraphics[width=3.5cm,height=3cm]{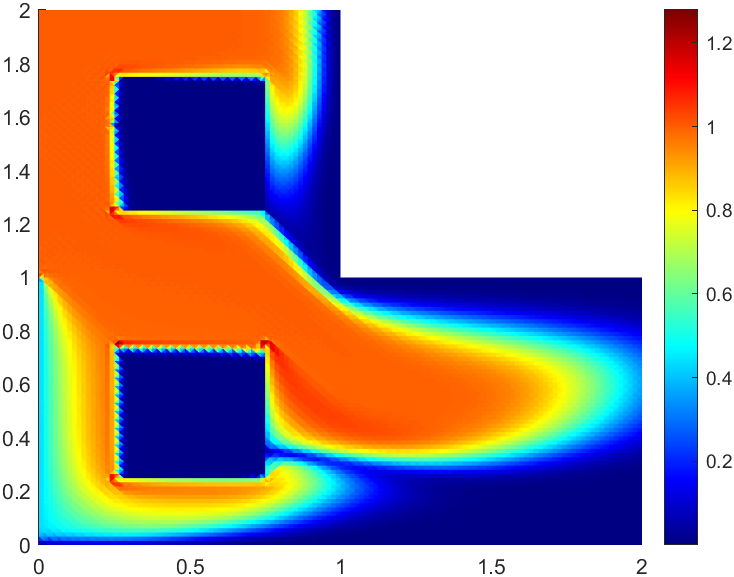}
    \caption{\footnotesize (Example \ref{example4})  Simulations of concentration at time 0.9 based on the velocity from the CG method. Left: P1-CG. Middle: P2-CG. Right: P3-CG.}
    \label{ex4_fig3}
\end{figure}

\begin{figure}[htbp]
    \includegraphics[width=3.5cm,height=3cm]{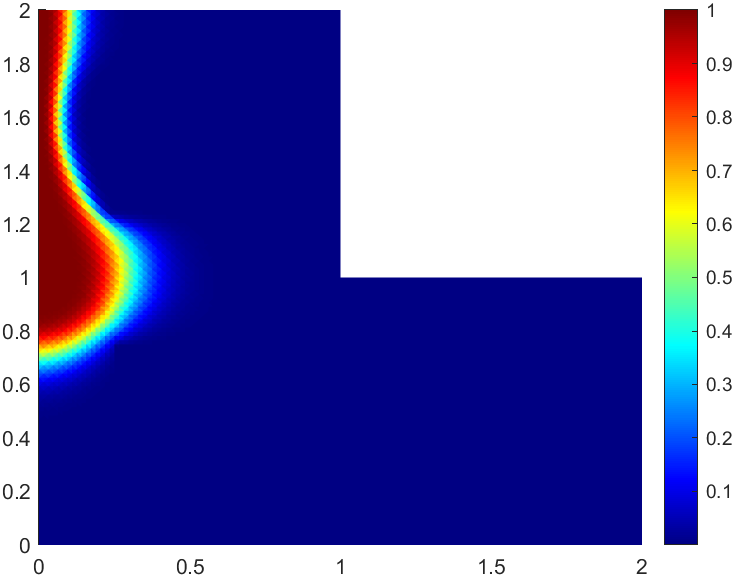}\qquad
    \includegraphics[width=3.5cm,height=3cm]{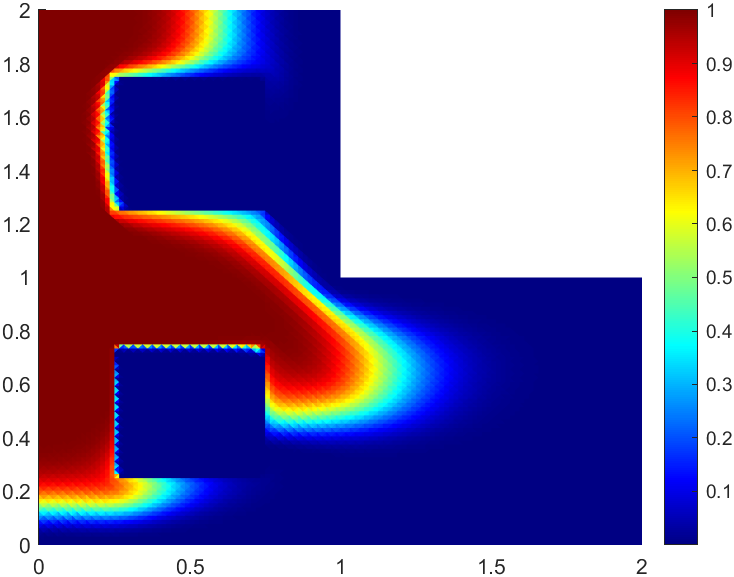}\qquad
    \includegraphics[width=3.5cm,height=3cm]{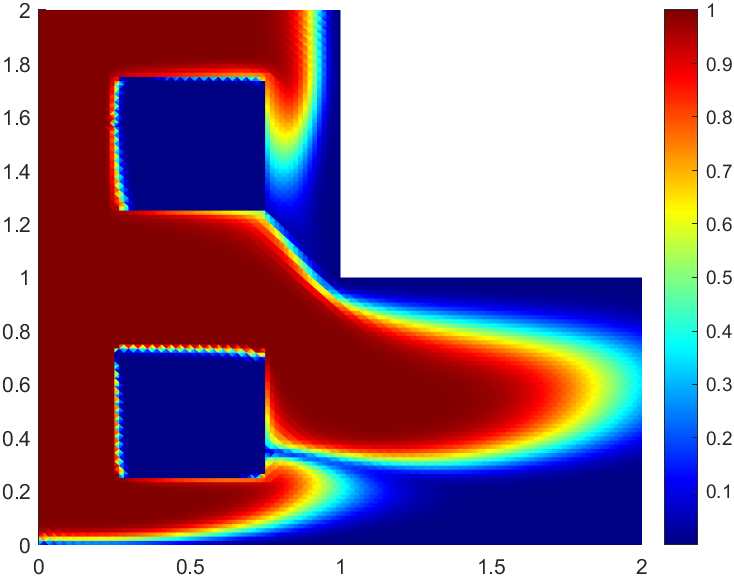}
    \caption{\footnotesize (Example \ref{example4})  Simulations of concentration based on the velocity from P3-EPG. Left-right: Simulations at time 0.1, 0.5 and 0.9.}
    \label{ex4_fig6}
\end{figure}

\begin{figure}[htbp]
    \includegraphics[width=3.5cm,height=3cm]{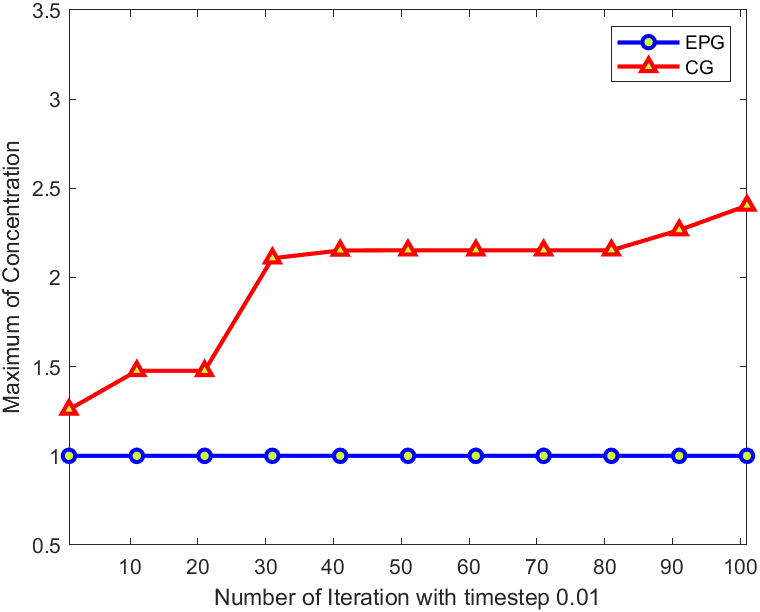}\qquad
    \includegraphics[width=3.5cm,height=3cm]{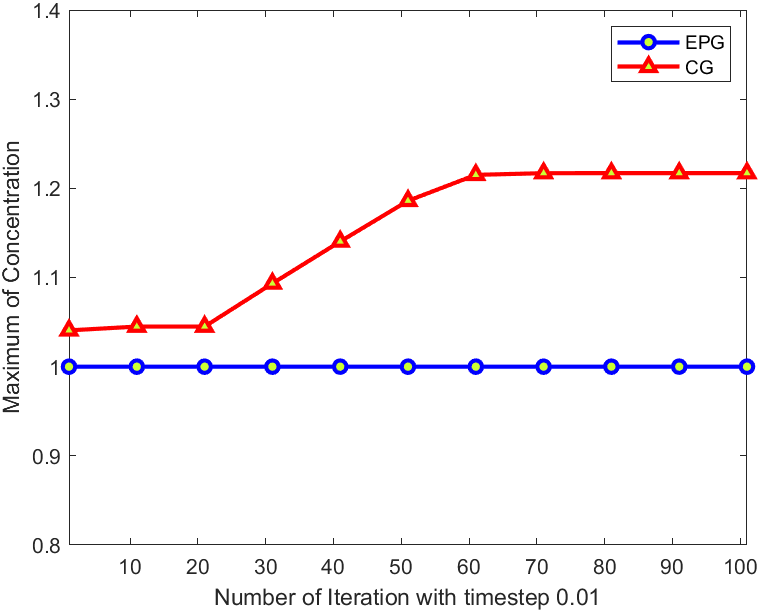}\qquad
    \includegraphics[width=3.5cm,height=3cm]{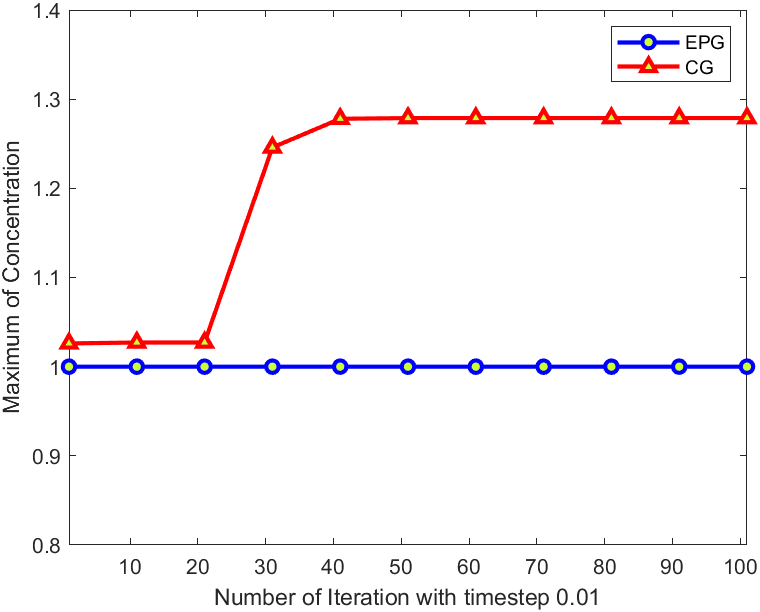}
    \caption{\footnotesize (Example \ref{example4})  Maximum concentrations based on the velocity from the CG and EPG methods. Left-right: P1, P2 and P3.}
    \label{ex4_fig7}
\end{figure}    
\end{example}

From the above three examples, we have the following observations: Firstly, for the Darcy flow with an exact solution such as Example \ref{example1}, we test and verify the optimal convergence in Figure \ref{ex1_fig2}. The results indicate that our proposed EPG method performs almost identical to the CG method in terms of the optimal convergence. Secondly, we test the local mass conservation of the CG and EPG methods according to (\ref{local_equal}). The computation is based on the Darcy flow and $\bfu_h$ here means the recovered velocity. From the Figures \ref{ex1_fig2}, \ref{ex3_fig2} and \ref{ex4_fig2}, we can see that the results obtained by calculating the maximum of locally mass-conservative residual for the CG method are much larger than those obtained for the EPG method. This confirms that the EPG method has the local mass conservation property that the CG method does not possess. Thirdly, we substitute the velocity computed from the Darcy flow into the species transport to numerically solve the concentration. We have already demonstrated the upper bound preserving property of the concentration in Theorem \ref{implicit thm}, which utilizes the local mass conservation of the velocity. To verify the upper bound preserving property, we observe the values of the maximum concentration at different iteration steps. The numerical results above show that the concentration computed based on the velocity from the CG method instead of from the EPG method typically overshoots.

\section{Conclusion}\label{section6}
In this paper we focus on the EPG method for the Darcy flow in porous media. The corresponding weak formulation we provide does not require any penalty term. Besides, the theoretical analysis and numerical experiments presented in this work have demonstrated various features of the EPG method, especially its efficiency and accuracy, as well as its preservation of the local mass conservation, as applied to the coupled flow and transport. We believe that our proposed method can also be extended to the flow problems in fractured porous media, which is our future investigation.

\providecommand{\href}[2]{#2}

\end{document}